%\documentclass[11pt]{article}
%\usepackage{amssymb,amsmath,color} % font used for R in Real numbers
%
%\textwidth=160truemm \textheight=210truemm \evensidemargin=0mm
%\oddsidemargin=0mm \topmargin=0mm \headsep=0mm
%\parindent=2em
%\font\bigss=cmss12 scaled 1728 \allowdisplaybreaks

\documentclass[11pt,reqno]{amsart}

\usepackage{amsmath, amsfonts, amsthm, amssymb, mathrsfs, amscd, graphicx, cite, color}
\usepackage{latexsym,hyperref}

\makeatletter
\@namedef{subjclassname@2020}{%
  \textup{2020} Mathematics Subject Classification}
\makeatother

\textwidth=15.6cm \textheight=21.4cm \hoffset=-1.4cm \voffset=-0.7cm

%% redefinition of control sequences
\catcode`!=11
\let\!int\int \def\int{\displaystyle\!int}
\let\!lim\lim \def\lim{\displaystyle\!lim}
\let\!sum\sum \def\sum{\displaystyle\!sum}
\let\!sup\sup \def\sup{\displaystyle\!sup}
\let\!inf\inf \def\inf{\displaystyle\!inf}
\let\!cap\cap \def\cap{\displaystyle\!cap}
\let\!max\max \def\max{\displaystyle\!max}
\let\!min\min \def\min{\displaystyle\!min}
\let\!frac\frac \def\frac{\displaystyle\!frac}
\catcode`!=12

%%%%%%%%%%%%%%%%%%%%%%%%%%%%%%%%%%%%%%%%%%%%%%%%%%%%%%%%%%%%%%%%%%
\let\oldsection\section
\renewcommand\section{\setcounter{equation}{0}\oldsection}

\allowdisplaybreaks

\newtheorem{thm}{Theorem}[section]
\newtheorem{pro}{Proposition}[section]
\newtheorem{lem}{Lemma}[section]

\theoremstyle{definition}

\theoremstyle{remark}
\newtheorem{re}{Remark}[section]

\begin{document}
%%%%%%%%%%%%%%%%%%%% title %%%%%%%%%%%%%%%%%%%%%%%%%%%%%%%%%%%%%%%%%%%%%%%%%%
\title[Inviscid Limit of Compressible Viscoelastic Equations]
{Inviscid Limit of Compressible Viscoelastic Equations with the No-Slip Boundary Condition}

\author[D. Wang]{Dehua Wang}
\address{Department of Mathematics, University of Pittsburgh, Pittsburgh, PA 15260, USA.}
\email{dwang@math.pitt.edu}

\author[F. Xie]{Feng Xie}
\address{School of Mathematical Sciences and LSC-MOE, Shanghai Jiao Tong University, Shanghai 200240, P. R. China}
\email{tzxief@sjtu.edu.cn}

%
%\author{
%{\bf Dehua Wang}\\[1mm]
%\small Department of Mathematics,
%University of Pittsburgh.\\[1mm]
%{\bf Feng Xie}\\[1mm]
%\small School of Mathematical Sciences, and LSC-MOE,\\[1mm]
%\small Shanghai Jiao Tong University,
%Shanghai 200240, P.R.China\\[1mm]
%}
%\date{}
%\maketitle

\begin{abstract}  %\noindent{\bf Abstract:}
%In this paper, we are concerned with the inviscid limit for two dimensional (2D) compressible viscoelastic equations in half plane, where the no-slip boundary conditions are imposed on velocity vector. When the deformation tensor matrix is assumed to be a perturbation of the identity matrix, and it is an identity matrix on the boundary initially, we establish the uniform conormal regularity estimates of solutions to 2D compressible viscoelastic equations provided that the initial data belong to the related conormal Sobolev functional spaces. It is also required that the initial density is a perturbation of a positive constant, and there is no vacuum initially. Based on these uniform conormal regularity estimates achieved, the inviscid limit of solutions for compressible viscoelastic equations is rigorously proved  by some compactness arguments. As is well-known that for the study of corresponding inviscid limit for 2D compressible Navier-Stokes equations with no-slip boundary conditions, one can not expect the uniform energy estimates of solutions due to the appearance of strong boundary layers. However, when the deformation tensor effect is taken into account, our results show that the non-degeneracy deformation tensor effect can prevent the strong boundary layer from coming into being.

The inviscid limit for the two-dimensional  compressible viscoelastic equations on the half plane is considered under the no-slip boundary condition.   When the initial deformation tensor   is   a perturbation of the identity matrix and  the initial density is near a positive constant, we establish the uniform estimates of solutions to the  compressible viscoelastic flows in the  conormal Sobolev spaces. It is well-known that for the  corresponding inviscid limit of the  compressible Navier-Stokes equations with the no-slip boundary condition, one does not expect the uniform energy estimates of solutions due to the appearance of strong boundary layers. However, when the deformation tensor effect is taken into account, our results show that the deformation tensor plays an important role in the vanishing viscosity process and can prevent the formation of strong boundary layers. As a result we are able to justify the inviscid limit of solutions for the compressible viscous flows under the no-slip boundary condition governed by the viscoelastic equations,  based on the uniform conormal regularity estimates achieved in this paper.

\end{abstract}

%\footnotetext[1]{{\it E-mail address:} dwang@math.pitt.edu}
%
%\footnotetext[2]{{\it E-mail address:} tzxief@sjtu.edu.cn (F. Xie). Feng Xie's research is supported by National Natural Science Foundation of China No.11831003 and Shanghai Science and Technology Innovation Action Plan No. 20JC1413000.}
%
%\vskip 2mm

%\noindent {\bf 2000 Mathematical Subject Classification}:

\subjclass[2020]{76A10, 35M13, 76N17, 35A05, 76D03}

%\vskip 2mm
%
%\noindent {\bf Keywords}:

\keywords{Compressible viscoelastic equations, deformation tensor, no-slip boundary condition, uniform regularity estimates, inviscid limit, conormal Sobolev space, boundary layers.}
\date{\today}

\maketitle

\section{Introduction}
In this paper we consider the inviscid limit for the two-dimensional  compressible viscoelastic equations on the half plane:
\begin{align}
\label{1.1}
\left\{
\begin{array}{ll}
\partial_t\rho^\varepsilon+\nabla\cdot(\rho^\varepsilon {\bf u^\varepsilon})=0,\\
\rho^\varepsilon\partial_t{\bf u^\varepsilon}+(\rho^\varepsilon{\bf u^\varepsilon}\cdot\nabla) {\bf u^\varepsilon}-\varepsilon \mu\triangle {\bf u^\varepsilon}-\varepsilon(\mu+\lambda)\nabla\hbox{div}{\bf u^\varepsilon}+\nabla p(\rho^\varepsilon)=\hbox{div}(\rho^\varepsilon{\bf F}^\varepsilon {\bf F}^{\varepsilon \top}),\\
\partial_t {\bf F^\varepsilon}+({\bf u^\varepsilon}\cdot\nabla){\bf F^\varepsilon}=\nabla {\bf u^\varepsilon}\cdot {\bf F^\varepsilon}, \quad t>0,\quad {\bf x}=(x,y)\in\mathbb{R}^2_+:=\mathbb{R}\times\mathbb{R}_+,
\end{array}
\right.
\end{align}
where $\rho^\varepsilon$ denotes the density, ${\bf u^\varepsilon}=(u^\varepsilon, v^\varepsilon)$   the velocity,  and ${\bf F^\varepsilon}=(F^\varepsilon_1, F^\varepsilon_2)^\top$ the deformation tensor matrix with $F^\varepsilon_1=(1+f^\varepsilon_1, f^\varepsilon_2), F^\varepsilon_2=(f^\varepsilon_3, 1+f^\varepsilon_4)$;
% and superscript $T$ standing for transpose of a matrix.
 the viscosity coefficients $\mu\varepsilon$ and $\lambda\varepsilon$ satisfy $\mu>0$ and $(\mu+\lambda)>0$ with $\varepsilon\in (0,1)$ being a small parameter, and the pressure $p(\rho)$ is a function of the density $\rho$ that is given by the following formula in the isentropic case:
\begin{align}
\label{P}
p(\rho)=\rho^\gamma,\qquad \gamma\geq 1,
\end{align}
where $\gamma$ is the adiabatic constant. We refer the readers to \cite{DafermosBook2016,Joseph1990,RHN} for the discussions on the physical background of viscoelasticity.
The initial data of (\ref{1.1}) is given by
\begin{align}
\label{ID}
\rho^\varepsilon(0,x,y)=\rho_0(x,y),\quad  {\bf u^\varepsilon}(0,x,y)={\bf u_0}(x,y),\quad {\bf F^\varepsilon}(0,x,y)={\bf F_0}(x,y),
\end{align}
and the no-slip boundary condition is imposed on the velocity,
\begin{align}
\label{1.2}
{\bf u^\varepsilon}(t,x,0)={\bf 0}.
\end{align}
Since the equations of deformation tensor ${\bf F^\varepsilon}$ are a hyperbolic system, one does not need to impose any boundary condition for ${\bf F^\varepsilon}$ due to (\ref{1.2}), and the value of ${\bf F^\varepsilon}$ on the boundary is determined by its initial value. In this paper, we consider the case that
\begin{align} \label{F0}
{\bf F_0}(x,0)=\mathbb{I}_{2\times 2},
\end{align}
where $\mathbb{I}_{2\times 2}$ is a $2\times 2$ identity matrix, which implies
$$f_i^\varepsilon(0,x,0)=0, \quad i=1, 2, 3, 4.$$
A direct calculation from the third equation in (\ref{1.1}) and the boundary condition (\ref{1.2})  shows that
\begin{align}
\label{BCF}
f^\varepsilon_3(t,x,0)=0,
\end{align}
which can be seen from the third equation in \eqref{3.7}.
Formally, when $\varepsilon=0$, the equations in (\ref{1.1}) are reduced to the  following ideal compressible elastodynamic equations:
\begin{align}
\label{IIV}
\left\{
\begin{array}{ll}
\partial_t\rho^0+\nabla\cdot(\rho^0 {\bf u}^0)=0,\\
\rho^0\partial_t{\bf u}^0+(\rho^0{\bf u}^0\cdot\nabla) {\bf u}^0+\nabla p(\rho^0)=\hbox{div}(\rho^0{\bf F}^0{\bf F}^{0\top}),\\
\partial_t {\bf F}^0+({\bf u}^0\cdot\nabla){\bf F}^0=\nabla {\bf u}^0\cdot {\bf F}^0, \quad t>0,\quad {\bf x}=(x,y)\in  \mathbb{R}^2_+. %\mathbb{R}\times\mathbb{R}_+.
\end{array}
\right.
\end{align}
The aim of this paper is to justify the vanishing viscosity limit from  the viscoelastic equations  \eqref{1.1} to the inviscid elastodynamic equations  $\eqref{IIV}$
as $\varepsilon\to 0$ under the no-slip boundary condition \eqref{1.2} on the half plane.

There have been extensive studies on the existence of solutions to both the incompressible and compressible viscoelastic equations; see \cite{LLZ1,LLZ2,L,LZ,HZ, HW2,Hu-Lin2016},  the survey paper \cite{Hu-Lin-Liu2018} and the references therein.
 The inviscid limit of solutions for the Cauchy problem was studied in many papers such as \cite{AD,CW,K,Mas,S} for the incompressible Navier-Stokes equations and    in \cite{CLLM} for the incompressible viscoelastic equations; see also \cite{CP2010,Feireisl2021,HL89,HWY-2,LWW2} and their references for other related vanishing viscosity limits of the Cauchy problem for the compressible Navier-Stokes equations.
% When the inviscid limit problem is considered in a domain with a physical boundary, it is always related to the study of boundary layer theory \cite{Ole,sch,von}. When there exists a strong boundary layer, %saying the strength of boundary layer is $O(1)$,
%  the inviscid limit usually becomes extremely difficult due to the uncontrollability of the vorticity of boundary layer corrector.
%
When the inviscid limit problem is considered in a domain with a physical boundary, the vanishing viscosity limit problem is usually  more challenging due to the possible presence of boundary layers \cite{GGW,Ole,sch,von,WXY}.
In particular, if a strong boundary layer appears,  %saying the strength of boundary layer is $O(1)$,
  the inviscid limit usually becomes extremely difficult because of the uncontrollability of the vorticity of boundary layer corrector.
%{\em \color{red} It is known that the justification of inviscid limit of compressible Navier-Stokes equations with no-slip boundary conditions on the half plane is still open in finite regularity function spaces, due to the appearance of strong boundary layer \cite{GGW,WXY}.}
If the no-slip boundary condition \eqref{1.2} is replaced by the so-called Navier-slip boundary conditions, the strong boundary layer will disappear, and the inviscid limit  has been established in \cite{WW, WXY1} for the   compressible Navier-Stokes equations.  For the corresponding inviscid limit of the incompressible Navier-Stokes equations with the Navier-slip boundary conditions, we refer the readers to \cite{B, IS, MR, XX,CLQ18} and the references therein.
%
%{\color{red} When the velocity is imposed with the Navier-slip boundary conditions, the multi-scale expansion of the tangential velocity $u^\varepsilon$ takes the following form:
%\begin{align*}
%u^\varepsilon(t,x,y)=u^E(t,x,y)+\sqrt{\varepsilon}u^b(t, x, {y}/{\sqrt{\varepsilon}})+O(\sqrt{\varepsilon}).
%\end{align*}
%This is indeed the key reason why one can derive some uniform energy estimates of solutions to the Navier-Stokes equations with the Navier-slip boundary conditions in suitable Sobolev spaces, and the inviscid limit for both the incompressible and compressible Navier-Stokes equations is a direct consequence of these uniform energy estimates from some compact arguments.}

When the no-slip boundary condition is imposed, the inviscid limit problem in a domain with a boundary is more complicated and less developed in analysis.
  To the best of our knowledge, the inviscid limit of the incompressible Navier-Stokes equations with the no-slip boundary condition  was proved  only in the analytic function framework or in the Gevrey settings; see \cite{samm-caf1,samm-caf2, Mae, GMM} and the references therein.
  For the incompressible magnetohydrodynamic (MHD) equations with the no-slip boundary condition,     the well-posedness of solutions to the MHD boundary layer equations and the validity of Prandtl boundary layer expansion in the Sobolev spaces  were obtained in \cite{L-X-Y1, L-X-Y2}  provided that the tangential component of magnetic field does not degenerate near the physical boundary initially; and  it was proved in \cite{L-X-Y3} that    there are no strong boundary layers in the   inviscid limit for the incompressible non-resistive MHD system when the normal component of magnetic field does not degenerate near the physical boundary initially.
  However, the inviscid limit of the compressible Navier-Stokes equations with the no-slip boundary condition on the half plane is still open, except for the linearized Navier-Stokes equations \cite{XY1}, even in  the analytic function spaces or in the Gevrey class owing to the appearance of strong boundary layers \cite{GGW,WXY}.
  In this paper, we consider the inviscid limit for the compressible viscoelastic equations on the half plane with the no-slip boundary condition.
We find that the deformation tensor in viscoelasticity has a significant effect on %plays an important role in
 the vanishing viscosity process and can prevent the formation of strong boundary layers. For this reason we are able to justify %for the first time
 the inviscid limit of solutions for the compressible viscous flows governed by the viscoelastic equations \eqref{1.1} under the no-slip boundary condition.
  %  We show that although the velocity is imposed the no-slip boundary condition, there does not appear strong boundary layer, due to the non-degeneracy of deformation tensor matrix initially. In addition, the vacuum case is also needed to be excluded by requiring the initial density is a perturbation of a positive constant.
 % Consequently, we can justify the inviscid limit of solutions to the compressible viscoelastic equations.

To formulate our main results, let us define the conormal Sobolev spaces that will be used in this paper. Set the conormal derivative operators as  the following:
\begin{align*}
Z_0=\partial_t,\qquad Z_1=\partial_x,\qquad Z_2=\varphi(y)\partial_y,\qquad Z^{\alpha}=Z_0^{\alpha_0}Z_1^{\alpha_1}Z_2^{\alpha_2},
\end{align*}
with $\alpha=(\alpha_0, \alpha_1, \alpha_2)$ and $|\alpha|=\alpha_0+\alpha_1+\alpha_2$.  Here the weight function $\varphi(y)$ satisfies   $\varphi(0)=0,\ \varphi'(0)>0,$ $\|\partial_y^i\varphi\|_{L^\infty}\leq C$  ($i=0,...,m$ for some integer $m>0$),  and $\varphi(y)$ has  uniform lower and upper positive bounds away from the physical boundary, that is $C^{-1}\leq \varphi(y)\leq C$ for some $C>1$ when $y\geq \delta>0$ with some constant $\delta>0$. For example, $\varphi(y)=y/(1+y)$ may  be used as  a weight function.
Define the following two conormal Sobolev spaces:
\begin{align*}
%H^m_{co}([0, t]\times \mathbb{R}^2_+)=\{f(t, {\bf x}):Z^\alpha f\in L^2([0, t]\times\mathbb{R}^2_+), |\alpha|\leq m\},
H^m_{co}([0, t]\times \mathbb{R}^2_+)=\{f: Z^\alpha f\in L^2([0, t]\times\mathbb{R}^2_+), |\alpha|\leq m\},
\end{align*}
and
\begin{align*}
\mathcal{H}^m_{co}([0, t]\times \mathbb{R}^2_+)=\{f: Z^\alpha f\in L^\infty([0, t], L^2(\mathbb{R}^2_+)), |\alpha|\leq m\}.
\end{align*}
For a given $t>0$,
\begin{align*}
\|f(t)\|_m^2=\sum_{|\alpha|\leq m}\|Z^\alpha f(t,\cdot)\|_{L^2(\mathbb{R}^2_+)}^2,
\end{align*}
then
\begin{align*}
\|f\|_{H^m_{co}}^2=\int_0^t\|f(s)\|_m^2ds,\qquad \|f\|_{\mathcal{H}^m_{co}}^2=\sup\limits_{0\leq s\leq t}\|f(s)\|_m^2.
\end{align*}
As usual we use the notation:
\begin{align*}
%W^{m,\infty}_{co}([0, t]\times\mathbb{R}^2_+)=\{f(t,{\bf x}):Z^\alpha f(t,{\bf x})\in L^\infty([0, t]\times\mathbb{R}^2_+), |\alpha|\leq m\},
W^{m,\infty}_{co}([0, t]\times\mathbb{R}^2_+)=\{f:Z^\alpha f\in L^\infty([0, t]\times\mathbb{R}^2_+), |\alpha|\leq m\},
\end{align*}
and
\begin{align*}
\|f(t)\|_{m,\infty}=\sum\limits_{|\alpha|\leq m}\|Z^\alpha f(t,\cdot)\|_{L^\infty}.
\end{align*}
Denote the energy by
\begin{align}
N_m(t)=&\|(\rho^\varepsilon-1, {\bf u^\varepsilon}, {\bf F^\varepsilon}-\mathbb{I}_{2\times 2})\|_{\mathcal{H}^m_{co}}^2
+\varepsilon\left(\|\partial_y(\rho^\varepsilon, f_2^\varepsilon)\|_{\mathcal{H}^{m-1}_{co}}^2
+\|\partial^2_y(\rho^\varepsilon, f_2^\varepsilon)\|_{\mathcal{H}^{m-2}_{co}}^2\right) \nonumber\\
&+\|\partial_y(\rho^\varepsilon, {\bf u^\varepsilon}, {\bf F^\varepsilon})\|_{H^{m-1}_{co}}^2
+\|\partial^2_y(\rho^\varepsilon, {\bf u^\varepsilon}, {\bf F^\varepsilon})\|_{H^{m-2}_{co}}^2+\varepsilon\|\nabla {\bf u^\varepsilon}\|_{H^{m}_{co}}^2\nonumber\\
&+\varepsilon^2\left(\|\partial^2_y {\bf u^\varepsilon}\|_{H^{m-1}_{co}}^2+\|\partial^3_y {\bf u^\varepsilon}\|_{H^{m-2}_{co}}^2\right).
\end{align}
%\begin{thm} \label{Thm1}
%Let $m>8$ be an integer. Suppose the initial data satisfy
%\begin{align}
%\|({\bf u_0}, {\bf F_0}-\mathbb{I}_{2\times 2}, \rho_0-1)\|^2_m+\varepsilon(\|\partial_y(f_{20}, \rho_0)\|_{m-1}^2+\|\partial^2_y(f_{20}, \rho_0)\|_{m-2}^2)\leq \sigma_0,
%\end{align}
%for some sufficiently small positive constant $\sigma_0$, and
%\begin{align*}
%\rho_0\det({\bf F_0})=1,
%\qquad
%\hbox{div}(\rho_0 {\bf F}_0^T)=0.
%\end{align*}
%Then, there exists a time $T$ independent of $\varepsilon$ and a unique smooth solution $(\rho^\varepsilon, {\bf u^\varepsilon}, {\bf F^\varepsilon})$ to (\ref{1.1})-(\ref{1.2}) on $[0, T]$ satisfying
%\begin{align}
%\label{THM1}
%N_m(t)+\|(\rho^\varepsilon-1, {\bf u^\varepsilon}, {\bf F^\varepsilon}-\mathbb{I}_{2\times 2})(t)\|_{1,\infty}+\|\nabla (\rho^\varepsilon, {\bf u^\varepsilon}, {\bf F^\varepsilon})(t)\|_{1,\infty}\leq C\sigma_0,
%\end{align}
%for $t\in [0, T]$, where $C.0$ is some constant independent of $\varepsilon$.
%\end{thm}
We always take $0<\varepsilon<1$ and define
$$\Lambda^m(t)=\{(\rho-1, {\bf u}, {\bf F}-\mathbb{I}_{2\times 2})\in H^m_{co}, \; \partial_y(\rho, {\bf u}, {\bf F})\in H^{m-1}_{co},\; \partial_y^2(\rho, {\bf u}, {\bf F})\in H^{m-2}_{co}\}.$$
%([0, t]\times\mathbb{R}^2_+)\}.$$

Now we state our main theorem as follows.
\begin{thm} \label{Thm1}
Let $m>8$ be an integer. Suppose that the initial data $(\rho_0, {\bf u_0}, {\bf F_0})$ satisfies
%\begin{align}
%\|(\rho_0-1, {\bf u_0}, {\bf F_0}-\mathbb{I}_{2\times 2})\|^2_m+\varepsilon(\|\partial_y(\rho_0, f_{20})\|_{m-1}^2+\|\partial^2_y(\rho_0, f_{20})\|_{m-2}^2)\leq \sigma_0,
%\end{align}
%and
%\begin{align}
%\label{CCCC}
%\|\partial_y(\rho_0, {\bf u_0}, {\bf F_0})\|_{m-2}^2+\|\partial_y(\nabla\rho_0, \nabla{\bf u_0}, \nabla{\bf F_0})\|_{m-2}^2\leq \sigma_0,
%\end{align}
 \begin{align}
\label{CCCC}
\|(\rho_0-1, {\bf u_0}, {\bf F_0}-\mathbb{I}_{2\times 2})\|^2_m+\|\partial_y(\rho_0, {\bf u_0}, {\bf F_0})\|_{m-1}^2+\|\partial_y(\nabla\rho_0, \nabla{\bf u_0}, \nabla{\bf F_0})\|_{m-2}^2\leq \sigma_0,
\end{align}
for some sufficiently small positive constant $\sigma_0$, and
\begin{align} \label{rF}
\rho_0\det({\bf F_0})=1,
\qquad
\hbox{\rm div}(\rho_0 {\bf F}_0^T)=0.
\end{align}
Then, there exist a time $T>0$ that is independent of $\varepsilon$ and a unique solution $U^\varepsilon=(\rho^\varepsilon, {\bf u^\varepsilon}, {\bf F^\varepsilon})\in \Lambda^m(T)$ to (\ref{1.1})-(\ref{1.2}), % on $[0, T]$,
such that

\noindent
{\rm (1)} the following estimate holds for $t\in [0, T]$,
\begin{align}
\label{THM1}
N_m(t)+\|(\rho^\varepsilon-1, {\bf u^\varepsilon}, {\bf F^\varepsilon}-\mathbb{I}_{2\times 2})(t)\|_{1,\infty}+\|\nabla (\rho^\varepsilon, {\bf u^\varepsilon}, {\bf F^\varepsilon})(t)\|_{1,\infty}\leq C\sigma_0,
\end{align}
  where $C>0$ is some constant independent of $\varepsilon$;

\noindent
{\rm (2)}  there exists a function $U^0=(\rho^0, {\bf u^0}, {\bf F^0})\in \Lambda^{m}(T)$ % on $[0, T]$
satisfying the following limit:
\begin{align}
\lim_{\varepsilon\to 0}\sup\limits_{t\in[0,T]}\left\|(U^\varepsilon-U^0, \partial_y(U^\varepsilon-U^0))(\cdot, t)\right\|_{L^\infty(\mathbb{R}^2_+)}= 0,
\end{align}
and   $U^0=(\rho^0, {\bf u^0}, {\bf F^0})$ is a unique solution to the ideal compressible elastodynamic equations (\ref{IIV}) with the same initial data $(\rho_0, {\bf u_0}, {\bf F_0})$ and the no-slip boundary condition.
%there exists a unique solution $U^0=(\rho^0, {\bf u^0}, {\bf F^0})\in \Lambda^{m}$ on $[0, T]$ to the ideal compressible elastodynamic equations (\ref{IIV}) with the same initial data $(\rho_0, {\bf u_0}, {\bf F_0})$ and the no-slip boundary condition, satisfying the following:
%\begin{align}
%%\sup\limits_{t\in[0,T]}\left(\|(\rho^\varepsilon-\rho^0, {\bf u^\varepsilon}-{\bf u^0}, {\bf F^\varepsilon}-{\bf F^0})\|_{L^\infty(\mathbb{R}^2_+)}+\|\partial_y(\rho^\varepsilon-\rho^0, {\bf u^\varepsilon}-{\bf u^0}, {\bf F^\varepsilon}-{\bf F^0})\|_{L^\infty(\mathbb{R}^2_+)}\right)\rightarrow 0,
%\lim_{\varepsilon\to 0}\sup\limits_{t\in[0,T]}\left\|(U^\varepsilon-U^0, \partial_y(U^\varepsilon-U^0))(\cdot, t)\right\|_{L^\infty(\mathbb{R}^2_+)}\rightarrow 0.
%\end{align}
%%as $\varepsilon$ goes to zero.
\end{thm}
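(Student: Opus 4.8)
The plan is to establish the uniform (in $\varepsilon$) conormal a priori estimate $N_m(t)\le C\sigma_0$ on a time interval $[0,T]$ independent of $\varepsilon$, and then pass to the limit. First I would reformulate the system using the algebraic constraints in \eqref{rF}: by the standard computation for the deformation gradient equation, the constraints $\rho\,\mathrm{div}(\mathbf{F}^T)=0$ (equivalently $\mathrm{div}(\rho\mathbf{F}^T)=0$, via the continuity equation) and $\rho\det(\mathbf{F})=1$ are propagated by \eqref{1.1}, so they hold for all $t$. The key structural point is that $\mathrm{div}(\rho\mathbf{F}^T)=0$, combined with the boundary condition $\mathbf{F_0}(x,0)=\mathbb{I}$, forces good control on the normal derivatives of $\mathbf{F}$ at $y=0$; in particular the "bad" component $f_3^\varepsilon$ vanishes on the boundary by \eqref{BCF}, and the term $\mathrm{div}(\rho^\varepsilon\mathbf{F}^\varepsilon\mathbf{F}^{\varepsilon\top})$ in the momentum equation supplies an extra stabilizing force that is precisely what is absent for pure Navier-Stokes. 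Following the Masmoudi–Rousset / Métivier–Schochet strategy for conormal estimates, I would symmetrize the hyperbolic-parabolic system, commute the conormal fields $Z^\alpha$ ($|\alpha|\le m$) through the equations, and perform weighted $L^2$ energy estimates; the viscous term $\varepsilon\mu\triangle\mathbf{u}^\varepsilon$ contributes the good dissipation $\varepsilon\|\nabla\mathbf{u}^\varepsilon\|_{H^m_{co}}^2$, while all commutators are absorbed using the smallness of $\sigma_0$ and the Sobolev/trace inequalities adapted to conormal spaces.

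Second, I would handle the normal derivative estimates, which is where the viscoelastic structure is decisive. For Navier-Stokes one only controls $\partial_y u^\varepsilon$ through an $O(1/\sqrt\varepsilon)$-type bound (the Prandtl scaling), producing strong boundary layers. Here, instead, one uses the $\mathbf{F}$-equation: $\partial_y\mathbf{F}^\varepsilon$ satisfies a transport equation forced by $\nabla\mathbf{u}^\varepsilon\cdot\partial_y\mathbf{F}^\varepsilon$ and $(\partial_y\nabla\mathbf{u}^\varepsilon)\cdot\mathbf{F}^\varepsilon$, so controlling $\partial_y\mathbf{u}^\varepsilon$ and $\partial_y\mathbf{F}^\varepsilon$ must be coupled. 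The crucial observation is that the momentum equation can be rewritten, using $\mathrm{div}(\rho^\varepsilon\mathbf{F}^{\varepsilon\top})=0$, so that $\mathrm{div}(\rho^\varepsilon\mathbf{F}^\varepsilon\mathbf{F}^{\varepsilon\top})=\rho^\varepsilon(\mathbf{F}^\varepsilon\cdot\nabla)\mathbf{F}^{\varepsilon\,\text{(appropriate index)}}$, i.e. a genuine wave-type coupling between $\mathbf{u}^\varepsilon$ and $\mathbf{F}^\varepsilon$ along the "elastic" characteristic. This yields a closed energy identity for $\partial_y\mathbf{u}^\varepsilon$ and $\partial_y\mathbf{F}^\varepsilon$ in $H^{m-1}_{co}$ (and one more normal derivative in $H^{m-2}_{co}$) with only $O(1)$ constants, together with the $\varepsilon$-weighted pieces $\varepsilon\|\partial_y^2\mathbf{u}^\varepsilon\|_{H^{m-1}_{co}}^2$ etc. coming from differentiating the parabolic equation. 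The special components $\partial_y\rho^\varepsilon$ and $\partial_y f_2^\varepsilon$ (which appear in $N_m$ with an extra $\varepsilon$-weight in $\mathcal{H}$-norm) are treated via the div-curl structure and the continuity equation. Pointwise bounds $\|(\rho^\varepsilon-1,\mathbf{u}^\varepsilon,\mathbf{F}^\varepsilon-\mathbb{I})\|_{1,\infty}+\|\nabla(\rho^\varepsilon,\mathbf{u}^\varepsilon,\mathbf{F}^\varepsilon)\|_{1,\infty}$ then follow from anisotropic Sobolev embeddings $\|g\|_{L^\infty}\lesssim \|g\|_{H^2_{co}}^{1/2}(\|g\|_{H^2_{co}}+\|\partial_y g\|_{H^2_{co}})^{1/2}$ using $m>8$.

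Third, closing the estimate: collecting all pieces gives a differential inequality of the form $\frac{d}{dt}N_m(t)\le C\big(N_m(t)+N_m(t)^{3/2}+\ldots\big)$ with $N_m(0)\le C\sigma_0$, so a standard continuation argument yields $N_m(t)\le 2C\sigma_0$ on a time interval $[0,T]$ with $T$ depending only on $\sigma_0$ and $m$, not on $\varepsilon$. Local existence for fixed $\varepsilon>0$ in $\Lambda^m$ is classical (parabolic–hyperbolic theory), and the uniform estimate extends it to $[0,T]$ uniformly. Finally, for part (2), the uniform bounds plus the equations give $\varepsilon$-uniform control of time derivatives, so by Aubin–Lions / strong compactness we extract a subsequence $U^\varepsilon\to U^0$ strongly in the stated $L^\infty$ topology (including one normal derivative), $U^0\in\Lambda^m(T)$; passing to the limit in \eqref{1.1} shows $U^0$ solves \eqref{IIV} with the same initial data, and since the no-slip condition $\mathbf{u}^\varepsilon|_{y=0}=0$ is preserved under this convergence, $\mathbf{u}^0|_{y=0}=0$ as well (this is consistent with \eqref{IIV} because $v^0|_{y=0}=0$ is the natural boundary condition and the tangential trace is then determined). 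Uniqueness of $U^0$ follows from a standard energy estimate for the limit system. \emph{The main obstacle} is the coupled normal-derivative estimate in Step 2: one must exhibit the exact cancellation, coming from $\mathrm{div}(\rho^\varepsilon\mathbf{F}^{\varepsilon\top})=0$ and $f_3^\varepsilon|_{y=0}=0$, that removes the would-be $O(1/\sqrt\varepsilon)$ growth in $\partial_y\mathbf{u}^\varepsilon$; without it the argument collapses to the Navier-Stokes situation where no uniform bound is expected.
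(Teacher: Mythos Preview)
Your overall architecture---conormal energy estimates, then normal-derivative estimates exploiting the elastic structure, then anisotropic Sobolev to close, then compactness for the limit---matches the paper. But your description of the normal-derivative step is too vague and misidentifies the mechanism, and as stated it would not close.

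You propose to differentiate the $\mathbf{F}$-equation in $y$ and run a ``closed energy identity'' for the coupled pair $(\partial_y\mathbf{u},\partial_y\mathbf{F})$, relying on the wave-type cancellation from $\mathrm{div}(\rho\mathbf{F}\mathbf{F}^\top)=\rho(\mathbf{F}^\top\cdot\nabla)\mathbf{F}$. The problem is that if you apply $\partial_y$ to the momentum equation and test against $\partial_y\mathbf{u}$, the viscous term produces a boundary contribution $\varepsilon\,\partial_y^2 u\,\partial_y u\big|_{y=0}$ that you cannot control under no-slip (you only know $u|_{y=0}=0$, not $\partial_y u|_{y=0}$). The paper avoids any such integration by parts in $y$. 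The actual mechanism is more elementary and sharper: the \emph{undifferentiated} $\mathbf{F}$-equation already contains $\partial_y u$ and $\partial_y v$ with nondegenerate coefficient $(1+f_4)$, namely
\[
\partial_y u=\tfrac{1}{1+f_4}\bigl(\partial_t f_2+u\partial_x f_2+v\partial_y f_2-f_2\partial_x u\bigr),\qquad
\partial_y v=\tfrac{1}{1+f_4}\bigl(\partial_t f_4+\cdots\bigr),
\]
so $\|\partial_y(u,v)\|_{m-1}$ is bounded \emph{algebraically} by the conormal norms $\|(u,v,f_2,f_4)\|_m$, with no $\varepsilon$-dependence whatsoever. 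This is the step that kills the Prandtl layer and it does not come from an energy identity.

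After that, the remaining normal derivatives are obtained component by component, not as a coupled energy: $\partial_y f_2$ (and $\partial_y p$) are extracted from the $u$- (resp.\ $v$-) momentum equation, written as $(1+\rho)(1+f_4)\partial_y f_2+\varepsilon\mu\partial_y^2 u=\text{(conormal terms)}$, then squared in $L^2$; the unwanted cross term $2\varepsilon\mu\int(1+\rho)(1+f_4)\partial_y f_2\,\partial_y^2 u$ is cancelled by pairing with the $\partial_y$-differentiated $f_2$-equation multiplied by $2\varepsilon\mu\,\partial_y f_2$ (this is where the $\varepsilon\|\partial_y f_2\|_{\mathcal H^{m-1}_{co}}^2$ term in $N_m$ originates). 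Finally $\partial_y f_1,\partial_y f_3,\partial_y f_4$ come purely from the algebraic constraints $\rho\det\mathbf{F}=1$ and $\mathrm{div}(\rho\mathbf{F}^\top)=0$. Your proposal lumps all of this into a single ``closed energy identity,'' which obscures exactly the cancellations you flag as the main obstacle; you should replace that paragraph with the algebraic inversion for $\partial_y(u,v)$ and the square-and-cancel argument for $\partial_y f_2,\partial_y p$.
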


\begin{re}
%The requirement that $\sigma_0$ is suitably small is used to avoid the appearance of vacuum and degeneracy of deformation tensor matrix, at least for short time.
Since we are considering the solutions to the  compressible flows of viscoelasticity in the conormal Sobolev spaces, we need to avoid the appearance of vacuum and degeneracy of deformation tensor matrix, which is guaranteed by the smallness condition \eqref{CCCC}.
\end{re}
\begin{re}
 The time regularity requirements on the initial data can be changed  to the spatial regularity requirements through the equations. We believe that the regularity requirements in Theorem \ref{Thm1} are not optimal.
\end{re}
\begin{re}
It is noted that the identity matrix $\mathbb{I}_{2\times 2}$ is not essential in the analysis. In fact, we only need to assume that the component $f_3^\varepsilon$ in the deformation tensor matrix is zero on the boundary, and the component $1+f_4^\varepsilon$ is not zero initially. We choose the initial data of the deformation tensor as a small perturbation of the identity matrix solely for the sake of simplicity of presentation. Moreover, the form of pressure is also not essential, and  our results can be extended to more general form of pressure without increasing any more difficulty.
\end{re}
\begin{re}
Based on the uniform conormal energy estimates (\ref{THM1}) achieved in the first part of Theorem \ref{Thm1}, the   inviscid limit in the second part of Theorem \ref{Thm1} can be regarded as a direct consequence of the first part  by using some compactness arguments as in \cite{MR}.
\end{re}

Next we shall explain the main difficulties and the strategy to prove the main theorem.
It is well known that when the inviscid limit is considered in a domain with a physical boundary, the uniform estimates of normal derivatives for solutions with respect to the small viscosity parameter are very difficult to obtain. Usually, it is impossible to achieve these uniform estimates due to the presence of strong boundary layers %,  such as
for the solutions to both the incompressible and compressible Navier-Stokes equations with the no-slip boundary condition. Surprisingly, if the deformation tensor in viscoelasticity is taken into account,  even though the no-slip boundary condition is imposed on  the velocity, the uniform estimates of normal derivatives for solutions to the compressible  viscoelastic fluid   equations can be achieved, which is the main finding of this paper. In other words, our results in Theorem \ref{Thm1}   show  that the deformation tensor can prevent the strong boundary layers from occurring.
These observations are obviously different from both the compressible and incompressible Navier-Stokes equations with the no-slip boundary condition.  The effect of  the deformation tensor is essentially used in deriving the conormal energy estimates.

%Below, we introduce how to derive the uniform estimates of normal derivatives mathematically. The strategy to estimate the normal derivatives for all components of ${\bf u^\varepsilon}, {\bf F^\varepsilon}$ and $p^\varepsilon$ are listed as follows, which includes four main steps.\\

We shall present below our strategy to establish the uniform estimates of normal derivatives for all components of ${\bf u^\varepsilon}, {\bf F^\varepsilon}$ and $p^\varepsilon$ in four main steps. \\[3pt]
%\smallskip
%
%\noindent
{\bf Step I:} Estimates of $\partial_yv^\varepsilon$ and $\partial_yu^\varepsilon$. \;
From the second and third equations in \eqref{3.7} on the deformation tensor ${\bf F}^\varepsilon$, we can write the normal derivatives in terms of the components of ${\bf F}^\varepsilon$ as the following:
%In this step, the uniform estimates are straightforward and can be derived from
%using the equations of $f_4^\varepsilon$ and $f_2^\varepsilon$:
\begin{align*}
\partial_yv^\varepsilon=\frac{1}{1+f_4^\varepsilon}(\partial_tf^\varepsilon_4+u^\varepsilon\partial_xf^\varepsilon_4+v^\varepsilon\partial_yf^\varepsilon_4-f^\varepsilon_2\partial_xv^\varepsilon),
\end{align*}
and
\begin{align*}
\partial_yu^\varepsilon=\frac{1}{1+f^\varepsilon_4}\{\partial_tf^\varepsilon_2+u^\varepsilon\partial_xf^\varepsilon_2+v^\varepsilon\partial_yf^\varepsilon_2-f^\varepsilon_2\partial_xu^\varepsilon\}.
\end{align*}
Then, using  the estimate
\begin{align*}
&\|v^\varepsilon\partial_yf^\varepsilon_4\|_{m-1}=\left\|\frac{v^\varepsilon}{\varphi(y)}\varphi(y)\partial_yf^\varepsilon_4\right\|_{m-1}
\lesssim\left\|\frac{v^\varepsilon}{\varphi(y)}\right\|_{m-1}\|Z_2f^\varepsilon_4\|_{L^\infty}+\left\|\frac{v^\varepsilon}{\varphi(y)}\right\|_{L^\infty}\|f^\varepsilon_4\|_m\\
&\lesssim\left\|\partial_yv^\varepsilon\right\|_{m-1}\|Z_2f^\varepsilon_4\|_{L^\infty}+\left\|\frac{v^\varepsilon}{\varphi(y)}\right\|_{L^\infty}\|f^\varepsilon_4\|_m,
\end{align*}
we see that, at least for the case of  suitably small $\|Z_2f^\varepsilon_4\|_{L^\infty}$, we can control $\|\partial_yv^\varepsilon\|_{m-1}$ by the quantity $(1+P(Q(t)))\|(u^\varepsilon, v^\varepsilon, f^\varepsilon_2, f^\varepsilon_4)\|_{m}$, where $Q(t)$ denotes the $W^{1, \infty}_{co}$-norm of the solution and its first-order derivatives, and $P$ is a generic polynomial that will be frequently used in the estimates of the paper.  Using the similar arguments and the estimate of  $\|\partial_yv^\varepsilon\|_{m-1}$, one can derive the estimate of   $\|\partial_yu^\varepsilon\|_{m-1}$.
Here the {\it a priori} assumption that $1+f_4^\varepsilon$ has a positive lower bound is required, which is guaranteed by the requirement that the initial data of the deformation tensor matrix is a small perturbation of the identity matrix.
 We remark that the deformation tensor ${\bf F}^\varepsilon$ plays an essential role here. It is not clear how to obtain the uniform conormal estimates of the normal derivatives  for the tangential velocity   $u^\varepsilon$ without the viscoelasticity effect under the no-slip boundary condition.
\\[3pt]
{\bf Step II:} Estimates of $\partial_yf_2^\varepsilon$. \;
As for the estimate of $\partial_yf_2^\varepsilon$, the equation of $u^\varepsilon$ will be used. However, notice that there is also a second-order normal derivative term of $\varepsilon\mu\partial^2_yu^\varepsilon$ in the equation of $u^\varepsilon$, as a consequence we  need to estimate $\rho^\varepsilon(1+f_4^\varepsilon)\partial_yf^\varepsilon_2+\varepsilon\mu\partial^2_yu^\varepsilon$ instead of $\rho^\varepsilon(1+f_4^\varepsilon)\partial_yf^\varepsilon_2$.
%\begin{align*}
%\rho^\varepsilon(1+f_4^\varepsilon)\partial_yf^\varepsilon_2+\varepsilon\mu\partial^2_yu^\varepsilon=...
%\end{align*}
Due to the conormal derivatives terms on the right hand side of the equation, taking the $L^2$-norm on both sides will produce a mixed term of $2\mu\varepsilon\rho^\varepsilon(1+f_4^\varepsilon)\partial_yf^\varepsilon_2\partial^2_yu^\varepsilon$. % on the left hand side.
To handle the mixed term, we apply the operator $\partial_y$ on the equation of $f_2^\varepsilon$
%\begin{align*}
%\frac{1}{1+f_4^\varepsilon}\partial_t\partial_yf_2^\varepsilon-\partial_y^2u^\varepsilon=...
%\end{align*}
and multiply this equation by $2\mu\varepsilon\partial_yf_2^\varepsilon$, then we  produce the same mixed term with the opposite sign.
 %of $-2\mu\varepsilon\rho^\varepsilon(1+f_4^\varepsilon)\partial_yf^\varepsilon_2\partial^2_yu^\varepsilon$.
 Adding these two estimates together will cancel the mixed terms and achieve the $L^2$ estimates of $\rho^\varepsilon(1+f_4^\varepsilon)\partial_yf^\varepsilon_2$ and $\varepsilon\mu\partial^2_yu^\varepsilon$. Here the {\it a priori} assumption that $1+f_4^\varepsilon$ and $\rho^\varepsilon$ have  positive lower bounds is required, which is guaranteed by the requirement that the initial data of  the deformation tensor matrix is a small perturbation of the identity matrix and the density is a small perturbation of the constant $1$.\\[3pt]
{\bf Step III:} Estimates of $\partial_yp^\varepsilon$. \;
By the similar arguments to those in Step II, we use the equations of $v^\varepsilon$ and $\rho^\varepsilon$ in the following manner:
\begin{align*}
\partial_yp^\varepsilon-(2\mu+\lambda)\varepsilon\partial_y^2v^\varepsilon=..., \qquad
\partial_tp^\varepsilon+\gamma p^\varepsilon\partial_yv^\varepsilon=...
\end{align*}
%and
%\begin{align*}
%\frac{1}{\gamma p^\varepsilon}\partial_tp^\varepsilon+\partial_yv^\varepsilon=...
%\end{align*}
Moreover, the following relationship will be essentially used:
\begin{align*}
\rho^\varepsilon(1+f_4^\varepsilon)\partial_yf_4^\varepsilon=-\partial_y\rho^\varepsilon+...
\end{align*}
due to $\partial_x(\rho^\varepsilon f_2^\varepsilon)+\partial_y(\rho^\varepsilon (1+f_4^\varepsilon))=0$, which is guaranteed by imposing the same constraint on the initial data. This relationship is used to change the term involving $\partial_yf_4^\varepsilon$ in the equation of $v^\varepsilon$ to the form of $\partial_y p^\varepsilon$, then it can be merged into $\partial_yp^\varepsilon$ on the left hand side.
In this way, the $L^2$ estimates of $\left(1+\frac{(1+f_4^\varepsilon)^2}{\gamma(\rho^\varepsilon)^{\gamma-1}}\right)\partial_yp^\varepsilon$ and $(2\mu+\lambda)\varepsilon\partial_y^2v^\varepsilon$ are established.\\[3pt]
{\bf Step IV:} Estimates of $\partial_yf_3^\varepsilon, \partial_yf_4^\varepsilon$ and $\partial_yf_1^\varepsilon$. \;
In this paper, the initial data of ${\bf F_0}$ and $\rho_0$ are required to satisfy the   natural constraints \eqref{rF},
%\begin{align*}
%\partial_x(\rho_0(1+f_{10}))+\partial_y(\rho_0f_{30})=0,\qquad \partial_x(\rho_0f_{20})+\partial_y(\rho_0(1+f_{40}))=0,\qquad \rho_0\det({\bf F_0})=1.
%\end{align*}
then the smooth solutions also satisfy the same relationship (c.f. \cite{HW2}).
%\begin{align*}
%\partial_x(\rho^\varepsilon (1+f^\varepsilon_{1}))+\partial_y(\rho^\varepsilon f^\varepsilon_{3})=0,\qquad \partial_x(\rho^\varepsilon f^\varepsilon_{2})+\partial_y(\rho^\varepsilon (1+ f^\varepsilon_{4}))=0,\qquad \rho^\varepsilon\det({\bf F^\varepsilon})=1.
%\end{align*}
Consequently, it follows that
\begin{align*}
\partial_yf^\varepsilon_3=\frac{1}{\rho^\varepsilon}\{-\partial_x(\rho^\varepsilon(1+f_1^\varepsilon))-f^\varepsilon_3\partial_y\rho^\varepsilon\}, \quad
%\end{align*}
%and
%\begin{align*}
\partial_yf^\varepsilon_4=\frac{1}{\rho^\varepsilon}\{-\partial_x(\rho^\varepsilon f^\varepsilon_2)-(1+f^\varepsilon_4)\partial_y\rho^\varepsilon\}.
\end{align*}
Thus, the estimates of $\partial_yf_3^\varepsilon$ and $\partial_yf_4^\varepsilon$ can be derived.
As for the estimate of $\partial_yf_1^\varepsilon$, it can be directly deduced from the following equation:
\begin{align*}
\partial_yf^\varepsilon_1=&\frac{1}{(1+f^\varepsilon_4)}\left\{\partial_y\left(\frac{1}{\rho^\varepsilon}\right)+\partial_y(f^\varepsilon_2f^\varepsilon_3)-(1+f^\varepsilon_1)\partial_yf^\varepsilon_4\right\}\\
=&\frac{1}{(1+f^\varepsilon_4)}\left\{\partial_y\left(\frac{1}{\rho^\varepsilon}\right)+f^\varepsilon_3\partial_yf^\varepsilon_2-\frac{f^\varepsilon_2}{\rho^\varepsilon}
(f^\varepsilon_3\partial_y\rho^\varepsilon+\partial_x(\rho^\varepsilon f^\varepsilon_1))+\frac{f^\varepsilon_1}{\rho^\varepsilon}(f^\varepsilon_4\partial_y\rho^\varepsilon+\partial_x(\rho^\varepsilon f_2^\varepsilon))\right\},
\end{align*}
using the property: $\rho^\varepsilon\det({\bf F^\varepsilon})=1$.

With the above four steps we obtain the estimates of the first order normal derivatives. Finally, to close the energy estimates, it suffice to control $Q(t)$ by the conormal energy estimates. According to Lemma \ref{L2}, in order to estimate $Q(t)$, we still need to derive the conormal estimates of the second order normal derivatives.  We repeat the above four steps for the second order normal derivatives to complete the energy estimate procedure, and then justify the inviscid limit of \eqref{1.1} under the no-slip boundary condition.

The paper is organized as follows. In Section 2, we give some preliminaries and technical lemmas. Section 3 is devoted to deriving the uniform conormal energy estimates of solutions to (\ref{1.1})-(\ref{1.2}). In Section 4, we establish the conormal estimates for the normal derivatives of solutions to (\ref{1.1})-(\ref{1.2}). Based on the uniform estimates established in Sections 3 and 4, we prove main Theorem \ref{Thm1}   in Section 5.
%It is noted that the notation $A\lesssim B$ means that $A\leq CB$ for some generic constant $C$. And $\|\cdot\|\triangleq\|\cdot\|_{L^2(\mathbb{R}^2_+)}$.

\bigskip

\section{Preliminary}

In this section, we  shall present some technical lemmas that will be used frequently in the analysis of the paper later.

We first recall the following generalized Sobolev-Gagliardo-Nirenberg-Moser inequality in the conormal Sobolev spaces (see  \cite{Gues} and  the proof):
\begin{lem}
\label{L1}
For the functions $f, g\in L^\infty([0,t]\times\mathbb{R}^2_+)\cap H_{co}^m([0,t]\times\mathbb{R}^2_+)$, it holds that
\begin{align*}
\int_0^t\|(Z^\alpha fZ^\beta g)(s)\|^2ds\lesssim \|f\|_{L_{t,{\bf x}}^\infty}^2\int_0^t\|g(s)\|_m^2ds+\|g\|_{L_{t,{\bf x}}^\infty}^2\int_0^t\|f(s)\|_m^2ds\quad \hbox{for }\ |\alpha|+|\beta|\leq m.
\end{align*}
\end{lem}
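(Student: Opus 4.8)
The plan is to recognize the estimate as the standard Gagliardo--Nirenberg--Moser product inequality in conormal Sobolev spaces, now on the space--time domain $\Omega:=[0,t]\times\mathbb{R}^2_+$, whose boundary is $\{y=0\}$ and whose admissible vector fields are $Z_0=\partial_t,\ Z_1=\partial_x,\ Z_2=\varphi(y)\partial_y$. With this reading the left side is exactly $\|Z^\alpha f\,Z^\beta g\|_{L^2(\Omega)}^2$ and the right side is $\|f\|_{L^\infty(\Omega)}^2\|g\|_{H^m_{co}(\Omega)}^2+\|g\|_{L^\infty(\Omega)}^2\|f\|_{H^m_{co}(\Omega)}^2$, since $\|g\|_{H^m_{co}(\Omega)}^2=\int_0^t\|g(s)\|_m^2\,ds$. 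First I would dispose of the degenerate cases: if $|\alpha|=0$, then by H\"older $\|f\,Z^\beta g\|_{L^2(\Omega)}\le\|f\|_{L^\infty(\Omega)}\|Z^\beta g\|_{L^2(\Omega)}\le\|f\|_{L^\infty(\Omega)}\|g\|_{H^m_{co}(\Omega)}$ because $|\beta|\le m$, and symmetrically when $|\beta|=0$; squaring produces one of the two terms on the right.

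For the generic case $a:=|\alpha|\ge 1$, $b:=|\beta|\ge 1$, $a+b\le m$, I would set $N:=a+b\le m$ and invoke the conormal Gagliardo--Nirenberg interpolation inequality
\[
\|Z^\gamma h\|_{L^{2N/|\gamma|}(\Omega)}\lesssim \|h\|_{L^\infty(\Omega)}^{\,1-|\gamma|/N}\,\|h\|_{H^N_{co}(\Omega)}^{\,|\gamma|/N},\qquad 1\le|\gamma|\le N,
\]
applied once to $f$ with $\gamma=\alpha$ and once to $g$ with $\gamma=\beta$. Since $\tfrac{a}{2N}+\tfrac{b}{2N}=\tfrac12$, H\"older's inequality with the conjugate exponents $2N/a$ and $2N/b$ gives
\[
\|Z^\alpha f\,Z^\beta g\|_{L^2(\Omega)}\lesssim \|f\|_{L^\infty(\Omega)}^{\,b/N}\|f\|_{H^N_{co}(\Omega)}^{\,a/N}\,\|g\|_{L^\infty(\Omega)}^{\,a/N}\|g\|_{H^N_{co}(\Omega)}^{\,b/N}=A^{\,b/N}B^{\,a/N},
\]
where $A:=\|f\|_{L^\infty(\Omega)}\|g\|_{H^N_{co}(\Omega)}$, $B:=\|g\|_{L^\infty(\Omega)}\|f\|_{H^N_{co}(\Omega)}$, and I used $N\le m$ so that $\|\cdot\|_{H^N_{co}(\Omega)}\le\|\cdot\|_{H^m_{co}(\Omega)}$. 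The weighted arithmetic--geometric mean inequality (exponents $\tfrac{b}{N}+\tfrac{a}{N}=1$) then yields $A^{b/N}B^{a/N}\le\tfrac{b}{N}A+\tfrac{a}{N}B\le A+B$, so $\|Z^\alpha f\,Z^\beta g\|_{L^2(\Omega)}^2\lesssim A^2+B^2$, which is precisely the asserted bound once $A^2=\|f\|_{L^\infty(\Omega)}^2\int_0^t\|g(s)\|_m^2\,ds$ and $B^2=\|g\|_{L^\infty(\Omega)}^2\int_0^t\|f(s)\|_m^2\,ds$ are unraveled.

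The one genuinely technical ingredient, and the main obstacle, is the conormal Gagliardo--Nirenberg inequality displayed above; this is where the precise structure of the $Z_i$ and the half-space geometry enter. I would establish it by induction on $N$, following the classical one-dimensional argument of Gagliardo and Nirenberg (as in \cite{Gues}): the core is a chain of estimates of the type $\|Z^j h\|_{L^p(\Omega)}^2\lesssim\|Z^{j-1}h\|_{L^{p_1}(\Omega)}\|Z^{j+1}h\|_{L^{p_2}(\Omega)}$ obtained by integration by parts, combined with H\"older and interpolation between $L^\infty(\Omega)$ and $L^2(\Omega)$. The point requiring care is that these integrations by parts generate no uncontrolled boundary contributions: differentiation in $t$ and $x$ is along directions tangent to $\partial\Omega$ and hence boundary-term-free, while $Z_2=\varphi(y)\partial_y$ satisfies $\varphi(0)=0$, so integration by parts in $y$ over $\mathbb{R}_+$ annihilates the boundary term at $y=0$; moreover, since $\varphi$ and its derivatives up to order $m$ are bounded, every commutator produced when moving $Z_2$ past itself or past $Z_0,Z_1$ again lies in the span of the $Z^\gamma$ with $W^{m,\infty}$ coefficients, so the induction closes within the class. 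Granting this inequality, the reductions above complete the proof.
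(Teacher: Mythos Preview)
Your proposal is correct and follows essentially the same approach that the paper invokes: the paper does not give an independent proof but simply cites \cite{Gues} for this generalized Gagliardo--Nirenberg--Moser inequality in conormal spaces, and your argument---H\"older in the degenerate cases, the conormal interpolation inequality $\|Z^\gamma h\|_{L^{2N/|\gamma|}}\lesssim\|h\|_{L^\infty}^{1-|\gamma|/N}\|h\|_{H^N_{co}}^{|\gamma|/N}$ combined with H\"older and Young in the generic case---is exactly the standard route from that reference. Your identification of the only delicate point (vanishing boundary terms in the $y$-integration by parts thanks to $\varphi(0)=0$, and closure of the commutator algebra) is accurate.
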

%The proof of this generalized Sobolev-Gagliardo-Nirenberg-Moser inequality in conormal Sobolev spaces can be found in \cite{Gues}.
Here we note that the notation $A\lesssim B$ means $A\leq CB$ for some generic constant $C$ and $\|\cdot\|\triangleq\|\cdot\|_{L^2(\mathbb{R}^2_+)}$.

Then we recall   the following anisotropic Sobolev embedding property in the conormal Sobolev spaces (see \cite{Paddick} and the proof):
\begin{lem}
\label{L2}
Let $f(t, {\bf x})\in H^3_{co}([0,t]\times\mathbb{R}^2_+)$ and $\partial_yf(t, {\bf x})\in H^2_{co}([0,t]\times\mathbb{R}^2_+)$, then
\begin{align*}
\|f\|_{L_{t,{\bf x}}^\infty}^2\lesssim \|f(0)\|_2^2+\|\partial_yf(0)\|_1^2+\int_0^t\left(\|f(t)\|_3^2+\|\partial_yf(t)\|_2^2\right)ds.
\end{align*}
\end{lem}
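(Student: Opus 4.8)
\textbf{Proof plan for Lemma \ref{L2}.}

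The plan is to prove the anisotropic embedding by combining a one‑dimensional (in the normal variable $y$) trace–type inequality with the two‑dimensional conormal Sobolev embedding in the tangential/time variables, and then to trade a derivative $\partial_y$ against the conormal weight $\varphi(y)$. First I would fix a point and write, for each fixed $(t,x)$,
\begin{align*}
|f(t,x,y)|^2 = -\int_y^\infty \partial_z\bigl(|f(t,x,z)|^2\bigr)\,dz \lesssim \int_0^\infty |f(t,x,z)|\,|\partial_z f(t,x,z)|\,dz,
\end{align*}
so that $\|f(t,x,\cdot)\|_{L^\infty_y}^2 \lesssim \|f(t,x,\cdot)\|_{L^2_y}\,\|\partial_y f(t,x,\cdot)\|_{L^2_y}$; taking the supremum over $y$ reduces matters to bounding an $L^\infty_{t,x}$ norm of the $y$‑integrated quantity. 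Then I would use the standard two‑dimensional Sobolev embedding $H^2(\mathbb{R}_t\times\mathbb{R}_x)\hookrightarrow L^\infty$ applied in the $(t,x)$‑variables — note that for functions of $(t,x)$ the conormal derivatives $Z_0=\partial_t$, $Z_1=\partial_x$ are just the ordinary derivatives, so this is literally the flat $H^2\hookrightarrow L^\infty$ embedding — to estimate, for each fixed $y$,
\begin{align*}
\|f(\cdot,\cdot,y)\|_{L^\infty_{t,x}}^2 \lesssim \sum_{\alpha_0+\alpha_1\le 2}\ \bigl\| Z_0^{\alpha_0}Z_1^{\alpha_1} f(\cdot,\cdot,y)\bigr\|_{L^2_{t,x}}^2,
\end{align*}
and similarly for $\partial_y f$. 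The key point is that the right‑hand side still depends on $y$ only through the $L^2_{t,x}$ norms, which are then integrated in $y$.

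Next I would combine these: applying the first (normal) inequality to the function $g(y):=\sup_{(t,x)}|f(t,x,y)|$ — more precisely, carrying the supremum inside — gives
\begin{align*}
\|f\|_{L^\infty_{t,\mathbf{x}}}^2 \lesssim \Bigl(\int_0^\infty \|f(\cdot,\cdot,z)\|_{L^\infty_{t,x}}^2\,dz\Bigr)^{1/2}\Bigl(\int_0^\infty \|\partial_z f(\cdot,\cdot,z)\|_{L^\infty_{t,x}}^2\,dz\Bigr)^{1/2},
\end{align*}
and inserting the $(t,x)$‑Sobolev bounds yields a product of $\bigl(\sum_{\alpha_0+\alpha_1\le2}\int_0^t\!\int_{\mathbb{R}^2_+}|Z_0^{\alpha_0}Z_1^{\alpha_1}f|^2\bigr)^{1/2}$ and the analogous quantity with $f$ replaced by $\partial_y f$ and $|\alpha|\le 1$. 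Finally I would upgrade the purely tangential derivatives $Z_0^{\alpha_0}Z_1^{\alpha_1}$ to the full conormal norms: $\|Z_0^{\alpha_0}Z_1^{\alpha_1}f\|_{L^2}\le \|f\|_{m'}$ with $m'=\alpha_0+\alpha_1$, so the first factor is bounded by $\int_0^t\|f(s)\|_3^2\,ds$ (one spare index is available since $m'\le 2<3$) and the second by $\int_0^t\|\partial_y f(s)\|_2^2\,ds$; the Cauchy–Schwarz product of these is $\lesssim \int_0^t(\|f(s)\|_3^2+\|\partial_y f(s)\|_2^2)\,ds$. The initial‑time terms $\|f(0)\|_2^2+\|\partial_y f(0)\|_1^2$ appear because the identity $|f(t,x,y)|^2=|f(0,x,y)|^2+\int_0^t\partial_s|f|^2\,ds$ must be used (together with an analogous normal integration at $s=0$) to control the trace at $s=0$ when one only assumes $H^m_{co}$/$H^{m-1}_{co}$ regularity on the space–time slab rather than pointwise‑in‑time bounds; tracking this boundary contribution carefully is the one slightly delicate bookkeeping point.

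The main obstacle I expect is making the interchange of the normal integration with the tangential supremum fully rigorous — i.e. justifying that $y\mapsto \|f(\cdot,\cdot,y)\|_{L^\infty_{t,x}}$ is measurable and that the fundamental theorem of calculus in $y$ applies to $\|f(\cdot,\cdot,y)\|_{L^\infty_{t,x}}^2$ (or, cleanly, working first with the $(t,x)$‑mollified/smooth function, proving the inequality there, and passing to the limit using density of smooth functions in $H^m_{co}$). Everything else is a routine chain of Sobolev and Cauchy–Schwarz inequalities; the only genuine inputs are the one‑dimensional trace inequality $\|h\|_{L^\infty_y}^2\lesssim \|h\|_{L^2_y}\|h'\|_{L^2_y}$ and the two‑dimensional flat embedding $H^2\hookrightarrow L^\infty$, both classical.
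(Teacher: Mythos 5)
The paper itself does not prove Lemma \ref{L2}; it refers to \cite{Paddick}, where the time variable is handled by the fundamental theorem of calculus in $t$ and the only Sobolev embedding used is one–dimensional (in $x$), combined with the normal trace inequality you also start from. Your chain takes a genuinely different route in the time direction, and that is where it breaks. The step $\|f(\cdot,\cdot,y)\|_{L^\infty_{t,x}}^2\lesssim\sum_{\alpha_0+\alpha_1\le2}\|Z_0^{\alpha_0}Z_1^{\alpha_1}f(\cdot,\cdot,y)\|_{L^2_{t,x}}^2$ is an embedding on the slab $[0,t]\times\mathbb{R}_x$, and its constant is \emph{not} uniform in $t$: if a bound by $\int_0^t(\|f(s)\|_3^2+\|\partial_yf(s)\|_2^2)\,ds$ alone held with a $t$-independent constant, the initial-data terms in the lemma would be superfluous, yet taking $f$ independent of time makes the time-integrated right-hand side vanish as $t\to0$ while $\|f\|_{L^\infty}$ does not; so the slab-embedding constant necessarily degenerates as $t\to 0$, and the lemma is used in the paper precisely on small time intervals with a uniform constant. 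Consequently the terms $\|f(0)\|_2^2+\|\partial_yf(0)\|_1^2$ are not a bookkeeping correction for the trace at $s=0$: they are the essential replacement for any Sobolev embedding in time. The correct mechanism is to write $\|g(t')\|_{L^2({\mathbb R}^2_+)}^2=\|g(0)\|_{L^2}^2+\int_0^{t'}\!2\!\int g\,\partial_sg\,d{\bf x}\,ds\le\|g(0)\|^2+\int_0^t(\|g\|^2+\|Z_0g\|^2)ds$ for $g=Z^{\beta}f$ and $g=Z^{\beta}\partial_yf$ with tangential $\beta$, which controls $\sup_{t'\le t}$ of the slice norms by the data plus the time integrals, and then, at each fixed time, to use only the one-dimensional embedding in $x$ together with your inequality $|f|^2\lesssim\int_0^\infty|f||\partial_zf|\,dz$. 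You mention the identity $|f(t)|^2=|f(0)|^2+\int_0^t\partial_s|f|^2ds$, but it is never actually integrated into your chain, so as written the argument does not establish the stated estimate.

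A secondary, repairable slip: bounding $\|\partial_yf(\cdot,\cdot,y)\|_{L^\infty_{t,x}}$ by $\sum_{|\alpha|\le1}\|Z^\alpha\partial_yf(\cdot,\cdot,y)\|_{L^2_{t,x}}$ fails even on the whole plane, since $H^1$ in the two variables $(t,x)$ does not embed into $L^\infty$; within your own scheme you would need two tangential derivatives on $\partial_yf$, which the right-hand side $\int_0^t\|\partial_yf(s)\|_2^2\,ds$ does permit, so this only affects the bookkeeping, not the availability of the final bound. The measurability/density issues you flag at the end are indeed routine; the genuine gap is the uniform-in-$t$ treatment of the time variable described above.
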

%Lemma \ref{L2} gives the anisotropic Sobolev embedding property in conormal Sobolev spaces, one refer to \cite{Paddick} for the proof.

To handle the commutators, it is helpful to introduce the following formula   (see   \cite{Paddick} and the proof): %s in details.
\begin{lem}
\label{L3}
There exist two families of bounded smooth functions $\{\phi_{k,m}(y)\}_{0\leq k\leq m-1}$ and \\
$\{\phi^{k,m}(y)\}_{0\leq k\leq m-1}$, such that
\begin{align*}
[Z_2^m, \partial_y]=\sum\limits_{k=0}^{m-1}\phi_{k,m}(y)Z_2^k\partial_y=\sum\limits_{k=0}^{m-1}\phi^{k,m}(y)\partial_y Z_2^k.
\end{align*}
\end{lem}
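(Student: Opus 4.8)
\emph{Plan.} The identity is a purely algebraic statement about compositions of the two first order operators $Z_2=\varphi(y)\partial_y$ and $\partial_y$, so the natural route is induction on $m$, carried out in parallel for the two representations. The whole argument rests on three elementary operator identities. First, the base commutator $[Z_2,\partial_y]=-\varphi'(y)\partial_y$, obtained by expanding $Z_2\partial_y f-\partial_y(\varphi\partial_y f)=\varphi\partial_y^2f-\varphi'\partial_yf-\varphi\partial_y^2f$. Second, a graded Leibniz rule: for any smooth $\psi=\psi(y)$ and any differential operator $D$, $Z_2\circ(\psi D)=(\varphi\psi')\,D+\psi\,(Z_2\circ D)$. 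Third, the relation $\varphi\,\partial_y^2=\partial_y\circ Z_2-\varphi'\partial_y$, which follows from $\partial_y(\varphi\partial_y f)=\varphi'\partial_yf+\varphi\partial_y^2f$.

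For the first representation, $m=1$ is exactly the base commutator with $\phi_{0,1}=-\varphi'$. Assuming $[Z_2^m,\partial_y]=\sum_{k=0}^{m-1}\phi_{k,m}Z_2^k\partial_y$, I would write $[Z_2^{m+1},\partial_y]=Z_2\circ[Z_2^m,\partial_y]+[Z_2,\partial_y]\circ Z_2^m$. Applying the Leibniz rule to each summand of the first term turns $Z_2\circ(\phi_{k,m}Z_2^k\partial_y)$ into $\varphi\phi_{k,m}'\,Z_2^k\partial_y+\phi_{k,m}Z_2^{k+1}\partial_y$. For the second term I would use the base commutator together with the inductive hypothesis in the form $\partial_y\circ Z_2^m=Z_2^m\partial_y-[Z_2^m,\partial_y]$ to rewrite $-\varphi'\,(\partial_y\circ Z_2^m)=-\varphi'Z_2^m\partial_y+\varphi'\sum_{k=0}^{m-1}\phi_{k,m}Z_2^k\partial_y$. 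Collecting the coefficients of $Z_2^j\partial_y$ for $0\le j\le m$ produces the functions $\phi_{j,m+1}$ explicitly as finite sums of products of $\varphi$, $\varphi'$, the $\phi_{k,m}$ and their first derivatives, and a trivial side induction shows each is smooth and bounded, being built from $\varphi$ and finitely many of its derivatives, which are bounded by hypothesis. The second representation is obtained the same way, except that after the Leibniz rule one invokes the third identity: $Z_2\circ(\phi^{k,m}\partial_y Z_2^k)$ becomes $\varphi(\phi^{k,m})'\,\partial_y Z_2^k+\phi^{k,m}\,\partial_y Z_2^{k+1}-\varphi'\phi^{k,m}\,\partial_y Z_2^k$, and adding $[Z_2,\partial_y]\circ Z_2^m=-\varphi'\,\partial_y Z_2^m$ again yields the claimed form with new bounded smooth coefficients.

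The only genuinely substantive point — the reason the lemma is worth recording rather than taken for granted — is that the coefficients stay \emph{bounded} near $y=0$, where $\varphi$ vanishes. A naive approach that first expands $Z_2^m=\sum_j\varphi^{\,j}c_j(y)\partial_y^j$ and then re-collects into the $Z_2^k\partial_y$ basis would superficially introduce negative powers of $\varphi$, forcing one to check that they cancel. The inductive scheme above avoids this altogether: each step only multiplies existing coefficients by $\varphi$ or $\varphi'$, or differentiates them, and never divides by $\varphi$, so boundedness is preserved automatically. Consequently I expect no real obstacle; the only care needed is the bookkeeping of the index shift $k\mapsto k+1$ and of which terms land in which $\phi_{j,m+1}$, which is routine.
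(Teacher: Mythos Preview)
Your inductive argument is correct. The base commutator, the Leibniz identity $Z_2\circ(\psi D)=(\varphi\psi')D+\psi\,Z_2 D$, and the rewriting $\varphi\partial_y^2=\partial_y Z_2-\varphi'\partial_y$ are exactly the three ingredients needed, and your handling of the second term via $\partial_y Z_2^m=Z_2^m\partial_y-[Z_2^m,\partial_y]$ cleanly keeps everything in the $Z_2^k\partial_y$ basis without ever dividing by $\varphi$. The side observation that each $\phi_{k,m}$ is a polynomial in $\varphi,\varphi',\ldots$ (hence bounded together with all its derivatives under the paper's standing hypothesis $\|\partial_y^i\varphi\|_{L^\infty}\le C$) is the right way to close the boundedness claim.

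As for comparison: the paper does not actually prove this lemma. It simply records the statement and refers the reader to \cite{Paddick} for the proof. Your self-contained induction is the standard argument one would expect to find there, so there is no substantive methodological difference to discuss.
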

Based on Lemma \ref{L3}, the following lemma holds true.
\begin{lem}
\label{L4}
 There exists a generic constant $C>1$, such that
\begin{align*}
C^{-1}\sum\limits_{k=0}^m\|\nabla Z^ku\|^2_{L^2}\leq \|\nabla u\|^2_m\leq C\sum\limits_{k=0}^m\|\nabla Z^ku\|^2_{L^2}.
\end{align*}
\end{lem}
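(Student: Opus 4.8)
\textbf{Proof proposal for Lemma \ref{L4}.}

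The plan is to reduce the two-sided equivalence to the commutator formula of Lemma \ref{L3}, estimating the conormal norm $\|\nabla u\|_m^2 = \sum_{|\alpha|\le m}\|Z^\alpha \nabla u\|^2$ against the quantity $\sum_{k=0}^m\|\nabla Z^k u\|^2$ by moving the conormal derivatives past $\nabla = (\partial_x,\partial_y)$. The only nontrivial interaction is between the tangential field $Z_2=\varphi(y)\partial_y$ and the normal derivative $\partial_y$, since $Z_0=\partial_t$ and $Z_1=\partial_x$ commute with all of $\nabla$; thus in any monomial $Z^\alpha = Z_0^{\alpha_0}Z_1^{\alpha_1}Z_2^{\alpha_2}$ only the $Z_2^{\alpha_2}$ block needs care, and after commuting the already-harmless $Z_0,Z_1$ factors outside, the whole problem localizes to comparing $\|Z_2^j\partial_y w\|$ with $\|\partial_y Z_2^k w\|$ for $j,k\le m$, where $w$ ranges over $\partial_t^{\alpha_0}\partial_x^{\alpha_1}u$ (a function with the same conormal regularity as $u$).

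\textbf{Upper bound} $\|\nabla u\|_m^2 \lesssim \sum_{k=0}^m\|\nabla Z^k u\|^2$: First I would treat $\partial_x u$: since $Z^\alpha \partial_x u = \partial_x Z^\alpha u$, and $\partial_x Z^\alpha u$ is one component of $\nabla Z^\alpha u$ while $Z^\alpha u$ is built from at most $m$ conormal derivatives, this term is immediately bounded by $\sum_{k=0}^m\|\nabla Z^k u\|^2$. For $\partial_y u$ the term $Z^\alpha \partial_y u$ with $\alpha_2 = j$ equals (after pulling $Z_0^{\alpha_0}Z_1^{\alpha_1}$ through) $Z_0^{\alpha_0}Z_1^{\alpha_1}Z_2^{j}\partial_y u$, and by Lemma \ref{L3} we write $Z_2^{j}\partial_y = \partial_y Z_2^{j} + \sum_{k=0}^{j-1}\phi^{k,j}(y)\partial_y Z_2^{k}$. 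Inserting this, every resulting term is of the form (bounded smooth function of $y$) times $\partial_y$ of a conormal monomial $Z^\beta u$ with $|\beta|\le m$; taking $L^2$ norms and using $\|\phi^{k,j}\|_{L^\infty}\le C$ gives the bound by $C\sum_{k=0}^m\|\partial_y Z^k u\|^2 \le C\sum_{k=0}^m\|\nabla Z^k u\|^2$.

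\textbf{Lower bound} $\sum_{k=0}^m\|\nabla Z^k u\|^2 \lesssim \|\nabla u\|_m^2$: Here $\|\nabla Z^k u\|^2 = \|\partial_x Z^k u\|^2 + \|\partial_y Z^k u\|^2$. The first piece equals $\|Z^k\partial_x u\|^2$, which is one of the terms in $\|\partial_x u\|_m^2\le \|\nabla u\|_m^2$. For $\|\partial_y Z^k u\|^2$, write $Z^k = Z_0^{k_0}Z_1^{k_1}Z_2^{k_2}$ and commute: $\partial_y Z^k u = Z_0^{k_0}Z_1^{k_1}\partial_y Z_2^{k_2}u$, and by the second identity in Lemma \ref{L3}, $\partial_y Z_2^{k_2} = Z_2^{k_2}\partial_y - \sum_{j=0}^{k_2-1}\phi_{j,k_2}(y)Z_2^{j}\partial_y$ — wait, I should use the form $[Z_2^{m},\partial_y]=\sum \phi_{k,m}(y)Z_2^k\partial_y$, so $\partial_y Z_2^{k_2} = Z_2^{k_2}\partial_y - \sum_{j=0}^{k_2-1}\phi_{j,k_2}(y)Z_2^{j}\partial_y$. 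Substituting, $\partial_y Z^k u$ becomes a sum of terms $\psi(y)\,Z^\gamma \partial_y u$ with $\psi$ bounded smooth and $|\gamma|\le k\le m$, so $\|\partial_y Z^k u\|^2 \lesssim \sum_{|\gamma|\le m}\|Z^\gamma\partial_y u\|^2 = \|\partial_y u\|_m^2 \le \|\nabla u\|_m^2$. Summing over $k$ from $0$ to $m$ (a fixed finite number of terms, absorbing the count into $C$) gives the claim.

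\textbf{Main obstacle.} The calculation is essentially bookkeeping; the only point requiring attention is making sure the commutator expansions are applied in the correct order so that every intermediate term genuinely has at most $m$ conormal derivatives and the weight functions $\phi_{k,m},\phi^{k,m}$ stay in $L^\infty$ — the two equivalent forms in Lemma \ref{L3} (with $\partial_y$ on the inside versus outside) are exactly what is needed for the two directions respectively. A minor subtlety is that the constant $C$ depends on $m$ and on $\sup_k\|\phi_{k,m}\|_{L^\infty}$, $\sup_k\|\phi^{k,m}\|_{L^\infty}$, which is harmless since $m$ is fixed throughout. No boundary terms arise because the estimate is purely algebraic (no integration by parts in $y$), so the no-slip condition plays no role here.
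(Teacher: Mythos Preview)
Your proof is correct and follows essentially the same approach as the paper: both reduce to Lemma \ref{L3}, noting that $Z_0,Z_1$ commute with $\nabla$ so only the $[Z_2^{k_2},\partial_y]$ commutator matters, and both use the two representations (with $\partial_y$ inside versus outside) for the two directions. The paper is terser---it writes out only the lower bound explicitly and remarks that the other inequality is similar---but the argument is identical to yours.
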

%\begin{pf}
\begin{proof}
Denote $k=(k_0, k_1, k_2)$, then
\begin{align*}
\sum\limits_{k=0}^m\|\nabla Z^k u\|=&\sum\limits_{k=0}^m\|Z^k \nabla u\|+\sum\limits_{k=0}^m\|Z_0^{k_0}Z^{k_1}_1[\partial_y, Z_2^{k_2}] u\|, \\ %\quad k=(k_0, k_1, k_2)\\
\leq& \|\nabla u\|_m+\sum\limits_{j=0}^{k_2-1}\|\phi_{j,k_2}(y)Z_0^{k_0}Z_1^{k_1}Z^j_2\partial_y u\|\leq C\|\nabla u\|_m,
\end{align*}
where the commutator   $[\partial_x, Z^k]=0$ is used. The other inequality can be proved similarly.
\end{proof} %\end{pf}

\begin{lem}
\label{L5}
There exist two families of bounded smooth functions $\{\phi_{1, k,m}(y), \phi_{2, k,m}(y)\}_{0\leq k\leq m-1}$ and $\{\phi^{1, k,m}(y), \phi^{2, k,m}(y)\}_{0\leq k\leq m-1}$, such that
\begin{align*}
[Z_2^m, \partial^2_y]=\sum\limits_{k=0}^{m-1}\left(\phi_{1, k,m}(y)Z_2^k\partial_y+ \phi_{2, k,m}(y)Z_2^k\partial^2_y\right)
=\sum\limits_{k=0}^{m-1}\left(\phi^{1, k,m}(y)\partial_y Z_2^k+\phi^{2, k,m}(y)\partial^2_y Z_2^k\right).
\end{align*}
\end{lem}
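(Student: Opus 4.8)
The plan is to establish Lemma \ref{L5} as a direct iteration of Lemma \ref{L3}, exactly the way Lemma \ref{L4} bootstrapped off the first-order commutator identity. The starting point is the observation that $\partial_y^2 = \partial_y \circ \partial_y$, so one can compute $[Z_2^m,\partial_y^2]$ by inserting two copies of the known commutator $[Z_2^m,\partial_y]$.

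First I would write the standard operator identity
\begin{align*}
[Z_2^m,\partial_y^2] = [Z_2^m,\partial_y]\partial_y + \partial_y[Z_2^m,\partial_y].
\end{align*}
Into the first term I substitute the first representation from Lemma \ref{L3}, namely $[Z_2^m,\partial_y]=\sum_{k=0}^{m-1}\phi_{k,m}(y)Z_2^k\partial_y$, which immediately gives $\sum_{k=0}^{m-1}\phi_{k,m}(y)Z_2^k\partial_y^2$, already of the asserted form with $\phi_{2,k,m}:=\phi_{k,m}$. For the second term $\partial_y[Z_2^m,\partial_y]$, I again use the first representation and apply the Leibniz rule to $\partial_y\big(\phi_{k,m}(y)Z_2^k\partial_y\,\cdot\,\big)=\phi_{k,m}'(y)Z_2^k\partial_y + \phi_{k,m}(y)\partial_y Z_2^k\partial_y$; then in the last piece I commute $\partial_y$ past $Z_2^k$ via Lemma \ref{L3} once more (applied with $m$ replaced by $k\le m-1$), which only produces terms of the form $\phi_{j,k}(y)Z_2^j\partial_y$ composed with one further $\partial_y$, i.e.\ $Z_2^j\partial_y^2$ terms with $j\le k-1$. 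Collecting all resulting contributions and relabeling the summation index gives the first claimed representation with the two families $\{\phi_{1,k,m},\phi_{2,k,m}\}$; each is a finite sum of products of the $\phi$'s from Lemma \ref{L3} and their first $y$-derivatives, hence bounded and smooth because those building blocks are.

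The second representation $[Z_2^m,\partial_y^2]=\sum_{k=0}^{m-1}\big(\phi^{1,k,m}(y)\partial_y Z_2^k+\phi^{2,k,m}(y)\partial_y^2 Z_2^k\big)$ is obtained in exactly the same fashion but starting from the \emph{second} representation in Lemma \ref{L3}, $[Z_2^m,\partial_y]=\sum_{k=0}^{m-1}\phi^{k,m}(y)\partial_y Z_2^k$; one writes $[Z_2^m,\partial_y^2]=[Z_2^m,\partial_y]\partial_y+\partial_y[Z_2^m,\partial_y]$, uses Lemma \ref{L3} to move $\partial_y$ to the left through $Z_2^k$ where needed, and applies Leibniz to the $\phi^{k,m}(y)$ factor. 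Alternatively, and perhaps cleanest, I would just note the two representations are related by commuting $\partial_y$ and $\partial_y^2$ past the $Z_2^k$ once more using Lemma \ref{L3} and Lemma \ref{L5}'s first half itself, but it is simpler to run the derivation independently. The boundedness and smoothness of $\phi^{1,k,m},\phi^{2,k,m}$ follows as before.

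I do not anticipate a genuine obstacle here; the only point requiring care is the bookkeeping of indices when $\partial_y$ is commuted through $Z_2^k$ for varying $k\le m-1$, to confirm that no terms with a conormal order exceeding $m-1$ or a normal order exceeding two are generated — both are clear since each application of Lemma \ref{L3} strictly lowers the power of $Z_2$, and at most two $\partial_y$'s are ever in play. The mildly tedious part is verifying that the explicit coefficient functions one writes down are indeed smooth and bounded on $[0,\infty)$, but this is inherited verbatim from the corresponding property of the $\phi_{k,m},\phi^{k,m}$ supplied by Lemma \ref{L3}, together with the fact that differentiating a bounded smooth function and multiplying bounded smooth functions preserve boundedness and smoothness.
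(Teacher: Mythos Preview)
Your argument is correct and is the natural one: decompose $[Z_2^m,\partial_y^2]=[Z_2^m,\partial_y]\partial_y+\partial_y[Z_2^m,\partial_y]$ and iterate Lemma~\ref{L3}. The paper gives no proof of its own, simply citing \cite{Paddick}, and this iterative computation is essentially what one finds there. One small caveat: your closing claim that ``differentiating a bounded smooth function preserves boundedness'' is false in general --- what actually saves you here is that the coefficients $\phi_{k,m}$ from Lemma~\ref{L3} are built explicitly from $\varphi$ and its derivatives, and the paper's standing hypothesis $\|\partial_y^i\varphi\|_{L^\infty}\le C$ for $0\le i\le m$ ensures all the derivatives you need remain bounded.
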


\begin{lem}
\label{L6}
There exists a family of bounded smooth functions $\{\psi_{k,m}(y)\}_{0\leq k\leq m-1}$, such that
\begin{align*}
[Z_2^m, 1/\varphi (y)]f=\sum\limits_{k=0}^{m-1}\psi_{k,m}(y)Z_2^k(f/\varphi); %,\qquad i=1,2;
\end{align*}
and there exists a family of bounded smooth functions $\{\psi^{k,m}(y)\}_{0\leq k\leq m-1}$, such that
\begin{align*}
[Z_2^m, \varphi(y)]f=\sum\limits_{k=0}^{m-1}\psi^{k,m}(y)Z_2^k(\varphi(y)f).
\end{align*}
\end{lem}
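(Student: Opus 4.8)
The plan is to establish each of the two identities by induction on $m$; the two inductions can be run independently, since the step for either formula uses only that same formula's inductive hypothesis (it is also possible to run a simultaneous induction, but this is not needed). The starting point is a single-step computation: because $Z_2=\varphi(y)\partial_y$, one checks directly that $[Z_2,1/\varphi]f=Z_2(f/\varphi)-(1/\varphi)Z_2 f=-\varphi'(y)\,(f/\varphi)$ and, symmetrically, $[Z_2,\varphi]f=Z_2(\varphi f)-\varphi\,Z_2 f=\varphi'(y)\,\varphi f$. These are exactly the claimed formulas for $m=1$, with $\psi_{0,1}=-\varphi'$ and $\psi^{0,1}=\varphi'$, both bounded and smooth by the hypotheses on $\varphi$.

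For the inductive step of the first identity, suppose $[Z_2^m,1/\varphi]f=\sum_{k=0}^{m-1}\psi_{k,m}(y)Z_2^k(f/\varphi)$. I would write $Z_2^{m+1}((1/\varphi)f)=Z_2\big(Z_2^m((1/\varphi)f)\big)$, substitute the inductive hypothesis, and expand using the Leibniz rule $Z_2(ab)=(Z_2a)b+aZ_2b$ together with the single-step commutator above. The term $(1/\varphi)Z_2^{m+1}f$ that is produced cancels against the $-(1/\varphi)Z_2^{m+1}f$ in the definition of the commutator, leaving three kinds of contributions: $-\varphi'\,(Z_2^m f/\varphi)$, the terms $(Z_2\psi_{k,m})\,Z_2^k(f/\varphi)$, and the terms $\psi_{k,m}\,Z_2^{k+1}(f/\varphi)$. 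The last two are already of the desired shape with bounded smooth coefficients, since $Z_2\psi_{k,m}=\varphi\,\partial_y\psi_{k,m}$. For the first one, I would use the algebraic identity $(1/\varphi)Z_2^m f=Z_2^m(f/\varphi)-[Z_2^m,1/\varphi]f$ and apply the inductive hypothesis once more to rewrite $(1/\varphi)Z_2^m f$ as a combination $\sum_{k=0}^{m}\widetilde\psi_k(y)Z_2^k(f/\varphi)$ with bounded smooth $\widetilde\psi_k$; multiplying by $-\varphi'$ preserves this structure. Collecting all contributions yields $[Z_2^{m+1},1/\varphi]f=\sum_{k=0}^{m}\psi_{k,m+1}(y)Z_2^k(f/\varphi)$, closing the induction. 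The second identity is proven line for line in the same way, with $[Z_2,\varphi]g=\varphi'(\varphi g)$ and $\varphi Z_2^m f=Z_2^m(\varphi f)-[Z_2^m,\varphi]f$ used in place of their $1/\varphi$ analogues.

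The one point requiring care is that every coefficient generated along the recursion remains bounded and smooth up to order $m$. This follows once one observes that each new coefficient is obtained from $\varphi'$ and from previous coefficients only by multiplication or by applying $Z_2=\varphi\partial_y$, so inductively all the $\psi_{k,m}$ and $\psi^{k,m}$ are universal polynomial combinations of $\varphi,\varphi',\dots,\varphi^{(m)}$ — in particular none of them ever contains a bare factor $1/\varphi$ — and hence are uniformly bounded by the assumptions $\|\partial_y^i\varphi\|_{L^\infty}\le C$ for $i=0,\dots,m$. I do not expect any genuine obstacle here: the argument is pure bookkeeping, and the only thing to be vigilant about is organizing the recursion so that every inverse power of $\varphi$ is absorbed into the quantities $f/\varphi$ (respectively $\varphi f$) that the statement is designed to carry, rather than being left in a coefficient.
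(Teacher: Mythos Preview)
Your induction argument is correct and complete. The base-case computations $[Z_2,1/\varphi]f=-\varphi'(f/\varphi)$ and $[Z_2,\varphi]f=\varphi'(\varphi f)$ are right, and your handling of the dangerous term $-\varphi'(Z_2^m f)/\varphi$ by rewriting $(1/\varphi)Z_2^m f=Z_2^m(f/\varphi)-[Z_2^m,1/\varphi]f$ and feeding in the inductive hypothesis is exactly the move that keeps all inverse powers of $\varphi$ packaged inside $f/\varphi$; the resulting recursion for the coefficients shows they are polynomial in $\varphi,\varphi',\dots,\varphi^{(m)}$ and hence bounded.

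As for comparison: the paper does not actually supply a proof of this lemma. It simply states the result and refers the reader to Paddick \cite{Paddick} for both Lemmas~\ref{L5} and~\ref{L6}. Your self-contained induction is the natural route and almost certainly matches what is in that reference.
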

The above two Lemmas \ref{L5} and \ref{L6}  and the proofs can also  be found in \cite{Paddick}.

\begin{pro}
\label{P1}
Assume that  $(\rho^\varepsilon, {\bf u^\varepsilon}, {\bf F^\varepsilon})$ is a smooth solution to (\ref{1.1})-(\ref{1.2}). Then, the following identities
\begin{align}
\rho^\varepsilon\det({\bf F^\varepsilon})=1,
\end{align}
and
\begin{align}
\hbox{\rm div}(\rho^\varepsilon {\bf F^{\varepsilon \top}})=0
\end{align}
hold for $t\in [0, T]$, provided that these constraints are satisfied initially.
\end{pro}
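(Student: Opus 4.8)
The plan is to verify these two algebraic constraints by showing that the quantities $\rho^\varepsilon\det({\bf F^\varepsilon})-1$ and $\divg(\rho^\varepsilon {\bf F}^{\varepsilon\top})$ satisfy transport equations that preserve the zero initial state. First I would treat the determinant identity. Writing $a=1+f_1^\varepsilon$, $b=f_2^\varepsilon$, $c=f_3^\varepsilon$, $d=1+f_4^\varepsilon$, so that $\det({\bf F^\varepsilon})=ad-bc$, I differentiate along the flow using the third equation in \eqref{1.1}, which componentwise gives $(\partial_t+{\bf u^\varepsilon}\cdot\nabla){\bf F^\varepsilon}=\nabla{\bf u^\varepsilon}\cdot{\bf F^\varepsilon}$. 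A direct computation of $(\partial_t+{\bf u^\varepsilon}\cdot\nabla)(ad-bc)$, expanding each factor's material derivative and collecting terms, yields $(\partial_t+{\bf u^\varepsilon}\cdot\nabla)\det({\bf F^\varepsilon})=(\divg {\bf u^\varepsilon})\det({\bf F^\varepsilon})$, the classical Jacobi/Liouville-type formula. Combining this with the continuity equation rewritten as $(\partial_t+{\bf u^\varepsilon}\cdot\nabla)\rho^\varepsilon=-\rho^\varepsilon\divg{\bf u^\varepsilon}$, the product $g:=\rho^\varepsilon\det({\bf F^\varepsilon})$ obeys $(\partial_t+{\bf u^\varepsilon}\cdot\nabla)g=0$. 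Hence $g$ is constant along particle trajectories, so if $g\equiv 1$ at $t=0$ then $g\equiv 1$ for all $t\in[0,T]$, using that the solution is smooth and the trajectories cover $\mathbb{R}^2_+$ (the no-slip condition \eqref{1.2} keeps the boundary invariant, so trajectories stay in the half-plane).

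Next I would handle the divergence-free constraint on the columns, i.e. $\divg(\rho^\varepsilon {\bf F}_j^{\varepsilon\top})=0$ for each column $j$; equivalently, in coordinates, $\partial_k(\rho^\varepsilon F_{jk}^\varepsilon)=0$ summed over $k$. The standard argument is to derive an evolution equation for the vector field ${\bf w}_j:=\rho^\varepsilon {\bf F}_j^\varepsilon$ (the $j$-th column scaled by density) and show $\divg {\bf w}_j$ is transported. Using the continuity equation and the ${\bf F^\varepsilon}$-equation, one computes $\partial_t(\rho^\varepsilon F_{jk}^\varepsilon)+\partial_\ell(u_\ell^\varepsilon \rho^\varepsilon F_{jk}^\varepsilon)=\partial_\ell(u_k^\varepsilon)\rho^\varepsilon F_{j\ell}^\varepsilon$ — this is the conservative form of the viscoelastic deformation-gradient equation, which says each ${\bf w}_j$ satisfies the same equation as a transported density-like vector. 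Taking $\divg_x$ of this identity and commuting derivatives, all terms reorganize so that $z_j:=\divg(\rho^\varepsilon {\bf F}_j^{\varepsilon\top})$ satisfies a transport equation of the form $(\partial_t+{\bf u^\varepsilon}\cdot\nabla)z_j + (\divg{\bf u^\varepsilon})z_j = 0$ (the precise lower-order coefficient does not matter). Since this is a linear homogeneous first-order PDE for $z_j$ with no source, $z_j(0,\cdot)=0$ forces $z_j\equiv 0$ on $[0,T]$. No boundary condition for ${\bf F^\varepsilon}$ is needed because the transport is along ${\bf u^\varepsilon}$, whose normal component vanishes on $\{y=0\}$.

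The main obstacle, and the only place requiring care, is the bookkeeping in the second step: verifying that taking the divergence of the conservative form $\partial_t(\rho^\varepsilon F_{jk}^\varepsilon)+\divg(u^\varepsilon \rho^\varepsilon F_{jk}^\varepsilon)=\rho^\varepsilon (\nabla u^\varepsilon : \text{row}) $ really collapses into a closed equation for $z_j$ with no leftover terms involving $\partial^2 {\bf F^\varepsilon}$ or $\partial^2{\bf u^\varepsilon}$ that cannot be absorbed. This works because of the special null structure of the deformation-gradient equation: the right-hand side, after differentiation, produces $\partial_\ell\partial_k u_k^\varepsilon \,\rho^\varepsilon F_{j\ell}^\varepsilon + \partial_k u_k^\varepsilon\,\partial_\ell(\rho^\varepsilon F_{j\ell}^\varepsilon)=\Delta(\text{of }u)\cdot {\bf w}_j +(\divg u^\varepsilon) z_j$, while the left side's double-derivative terms cancel the former against the transported $\partial_t z_j$ and convection terms — precisely the compatibility that also underlies the fact (cited from \cite{HW2}) that this constraint propagates. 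I would write out the index computation carefully once, note that smoothness of the solution justifies all the commutations of $\partial_t,\partial_x,\partial_y$, and then invoke uniqueness for the resulting linear transport equations to conclude. Since both $g-1$ and each $z_j$ solve homogeneous linear first-order equations with zero data, the identities hold throughout $[0,T]$.
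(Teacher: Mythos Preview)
Your proposal is correct and follows the standard argument. The paper itself does not give a proof of this proposition but simply refers to \cite{HW2}; your sketch---showing that $\rho^\varepsilon\det({\bf F^\varepsilon})$ is transported via the Jacobi formula, and that each component $z_j=\partial_k(\rho^\varepsilon F^\varepsilon_{kj})$ satisfies the homogeneous equation $(\partial_t+{\bf u^\varepsilon}\cdot\nabla)z_j+(\divg{\bf u^\varepsilon})z_j=0$---is precisely that argument, so there is no substantive difference. One small cleanup: your index placement wobbles between $\partial_k(\rho F_{jk})$ and the column description $\divg(\rho\,\text{col}_j)=\partial_k(\rho F_{kj})$; the latter is what matches the paper's convention (cf.\ the identities $\partial_x((1+\rho)(1+f_1))+\partial_y((1+\rho)f_3)=0$ etc.\ used in Section~4), so fix the indices consistently when you write out the computation.
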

%\begin{pf}
The proof of Proposition \ref{P1} can be found in \cite{HW2}.
%\end{pf}

\bigskip

\section{Conormal Energy Estimates}

In this section we shall derive the uniform conormal estimates of solutions to (\ref{1.1})-(\ref{1.2}). Firstly, we set
$$\rho^\varepsilon=1+\tilde{\rho}^\varepsilon.$$ % It is emphasized that
For simplicity of presentation, we omit the symbols $\varepsilon$ and ``$\sim$" in this section without causing any confusion. It is convenient to rewrite system (1.1) as the following:
\begin{align}
\label{3.1}
\left\{
\begin{array}{ll}
\partial_t\rho+\nabla\cdot((1+\rho){\bf u})=0,\\
(1+\rho)\partial_t{\bf u}+((1+\rho){\bf u}\cdot\nabla){\bf u}-((1+\rho)({\bf G_1}+{\bf e_1})\cdot\nabla){\bf G_1}\\
\qquad-((1+\rho)({\bf G_2}+{\bf e_2})\cdot\nabla){\bf G_2}+\nabla p=-\varepsilon\mu\nabla\times \omega+\varepsilon(2\mu+\lambda)\nabla\hbox{div}{\bf u},\\
\partial_t{\bf G_1}+({\bf u}\cdot\nabla){\bf G_1}=(({\bf G_1}+{\bf e_1})\cdot\nabla){\bf u},\\
\partial_t{\bf G_2}+({\bf u}\cdot\nabla){\bf G_2}=(({\bf G_2}+{\bf e_2})\cdot\nabla){\bf u},
\end{array}
\right.
\end{align}
with
\begin{align*}
{\bf u}=(u, v),\; \omega=\partial_yu-\partial_xv,\;  {\bf G_1}=(f_1, f_3),\;  {\bf G_2}=(f_2, f_4),\;  {\bf e_1}=(1, 0), \;  {\bf e_2}=(0, 1),
\end{align*}
and $\nabla\times=(-\partial_y,\ \partial_x)$.

The boundary conditions are imposed as the following:
\begin{align}
\label{3.2a}
{\bf u}(t,x,0)=f_3(t,x,0)=0.
\end{align}
Note that the boundary condition $f_3(t,x,0)=0$ is a consequence of the initial condition on the boundary \eqref{F0} (c.f. \eqref{BCF}).
We will establish the following uniform conormal energy estimates in this section.
\begin{pro}
\label{P3.1}
Under the assumptions in Theorem \ref{Thm1}, there exists a sufficiently small $\varepsilon_0>0$, such that for any $0<\varepsilon<\varepsilon_0$, the smooth solutions $(\rho, {\bf u}, {\bf G_1}, {\bf G_2})$ to (\ref{3.1})-(\ref{3.2a}) satisfy the following {\it a priori} estimates:
\begin{align}
\label{3.3a}
&\|(p-1, {\bf u}, {\bf G_1}, {\bf G_2})(t)\|_m^2+\varepsilon\int_0^t\|\nabla{\bf u}(\tau)\|_{m}^2d\tau\nonumber\\
\lesssim&\|(p-1, {\bf u}, {\bf G_1}, {\bf G_2})(0)\|_m^2+\delta \int_0^t\|\nabla p(\tau)\|_{m-1}^2d\tau+\delta\varepsilon^2\int_0^t\|\nabla^2{\bf u}(\tau)\|_{m-1}^2d\tau\nonumber\\
&+(1+P(Q(t)))\int_0^t\left(\|\nabla {\bf u}(\tau)\|_{m-1}^2+\|\nabla {\bf G_1}(\tau)\|_{m-1}^2+\|\nabla {\bf G_2}(\tau)\|_{m-1}^2\right)d\tau\\
&+(1+P(Q(t)))\int_0^t(\|\rho(\tau)\|_m^2+\|{\bf u}(\tau)\|_m^2+\| {\bf G_1}(\tau)\|_m^2+\| {\bf G_2}(\tau)\|_m^2)d\tau\nonumber,
\end{align}
for some small $\delta>0$ to be determined later, where
\begin{align*}
Q(t)=\sup\limits_{0\leq \tau\leq t}\{\|p-1, {\bf u}, {\bf G_1}, {\bf G_2}\|_{1, \infty}+\|\nabla p, \nabla {\bf u}, \nabla {\bf G_1}, \nabla {\bf G_2}\|_{1, \infty}\},
\end{align*}
and $P(\cdot)$ is a polynomial.
\end{pro}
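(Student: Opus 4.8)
\emph{Plan of the proof.}\quad The plan is to run a conormal energy estimate on the reformulated system \eqref{3.1}, with the pressure as the working variable. I would first rewrite the continuity equation in the equivalent form $\partial_t(p-1)+{\bf u}\cdot\nabla(p-1)+\gamma p\,\hbox{div}\,{\bf u}=0$, so that the $\nabla p$ coupling in the momentum equation is matched by a $\gamma p\,\hbox{div}\,{\bf u}$ term. For each multi-index $\alpha$ with $|\alpha|\le m$ I then apply $Z^\alpha$ to the four equations of \eqref{3.1}, obtaining transport-type equations for $Z^\alpha(p-1)$, $Z^\alpha{\bf u}$, $Z^\alpha{\bf G_1}$, $Z^\alpha{\bf G_2}$ with commutator source terms, and form the basic energy identity by testing the $Z^\alpha$-momentum equation with $Z^\alpha{\bf u}$, the $Z^\alpha$-pressure equation with $\frac{1}{\gamma p}Z^\alpha(p-1)$, and the $Z^\alpha$-equations for ${\bf G_i}$ with $(1+\rho)Z^\alpha{\bf G_i}$, then integrating over $\mathbb{R}^2_+$ and summing over $i$. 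The time-derivative terms combine into $\frac{d}{dt}$ of a functional that, using the positive lower and upper bounds for $1+\rho$ and $p$ guaranteed by the smallness hypothesis \eqref{CCCC}, is equivalent to $\|Z^\alpha(p-1,{\bf u},{\bf G_1},{\bf G_2})\|^2$; the extra terms produced when $\partial_t$ and ${\bf u}\cdot\nabla$ fall on the weights, or when $\hbox{div}((1+\rho){\bf u})=-\partial_t\rho$ is used in the convective terms, are lower order and are absorbed into the $(1+P(Q(t)))$-terms.

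The heart of the estimate is that all the principal couplings cancel or are favorable, with every boundary term vanishing. Indeed: (i) the pressure/velocity coupling $-\int Z^\alpha(p-1)\,\hbox{div}\,Z^\alpha{\bf u}$ coming from the momentum equation exactly cancels the $+\int Z^\alpha(p-1)\,\hbox{div}\,Z^\alpha{\bf u}$ coming from the pressure equation, the boundary term $\int_{\{y=0\}}Z^\alpha(p-1)\,Z^\alpha v\,dx$ being zero because $Z^\alpha v|_{y=0}=0$ (either $Z^\alpha$ is purely tangential and $v$ vanishes on the boundary, or $Z^\alpha$ contains a factor $Z_2=\varphi(y)\partial_y$ with $\varphi(0)=0$); (ii) after adding the momentum and the ${\bf G_i}$ identities the elastic cross terms combine into $-\sum_i\int(1+\rho)\,(({\bf G_i}+{\bf e_i})\cdot\nabla)(Z^\alpha{\bf u}\cdot Z^\alpha{\bf G_i})=\sum_i\int\hbox{div}((1+\rho)({\bf G_i}+{\bf e_i}))\,Z^\alpha{\bf u}\cdot Z^\alpha{\bf G_i}$, whose integrand vanishes identically by the constraint $\hbox{div}(\rho{\bf F}^{\top})=0$ of Proposition \ref{P1}, and whose boundary term vanishes because $Z^\alpha{\bf u}|_{y=0}=0$ (only this constraint is needed here; the determinant constraint $\rho\det({\bf F})=1$ enters the normal-derivative estimates of Section 4); and (iii) the viscous terms tested against $Z^\alpha{\bf u}$ produce, with vanishing boundary contributions (once more because $Z^\alpha{\bf u}|_{y=0}=0$) and modulo $O(\varepsilon)$ commutators, the dissipation $-\varepsilon\mu\|\hbox{curl}\,Z^\alpha{\bf u}\|^2-\varepsilon(2\mu+\lambda)\|\hbox{div}\,Z^\alpha{\bf u}\|^2$, which by the $L^2$ identity $\|\nabla Z^\alpha{\bf u}\|^2=\|\hbox{div}\,Z^\alpha{\bf u}\|^2+\|\hbox{curl}\,Z^\alpha{\bf u}\|^2$ (valid since $Z^\alpha{\bf u}$ vanishes on the boundary) together with Lemma \ref{L4} becomes, after summation, a coercive term $\gtrsim\varepsilon\|\nabla{\bf u}\|_m^2$, up to a remainder $\varepsilon\|\nabla{\bf u}\|_{m-1}^2$ that is moved to the right-hand side.

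What is left are the error terms. The leftover $\int[Z^\alpha,\partial_y](p-1)\,Z^\alpha v$ from the pressure gradient is controlled, using $\|[Z^\alpha,\partial_y](p-1)\|\lesssim\|\nabla p\|_{m-1}$ and Young's inequality, by $\delta\|\nabla p\|_{m-1}^2+C_\delta\|{\bf u}\|_m^2$ with a small, freely chosen $\delta$, yielding the term $\delta\int_0^t\|\nabla p(\tau)\|_{m-1}^2\,d\tau$ in \eqref{3.3a}. The commutators $[Z^\alpha,\nabla\times]\omega$ and $[Z^\alpha,\nabla\,\hbox{div}]{\bf u}$ coming from the viscous term are of the form (bounded functions of $y$) times $(\le m-1)$-conormal derivatives of $\nabla^2{\bf u}$, hence $\lesssim\|\nabla^2{\bf u}\|_{m-1}+\|\nabla{\bf u}\|_{m-1}$; since each carries a factor $\varepsilon$, an $\varepsilon$-weighted Young inequality turns them into $\delta\varepsilon^2\|\nabla^2{\bf u}\|_{m-1}^2+C_\delta\|{\bf u}\|_m^2+\|\nabla{\bf u}\|_{m-1}^2$, giving the term $\delta\varepsilon^2\int_0^t\|\nabla^2{\bf u}(\tau)\|_{m-1}^2\,d\tau$. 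All remaining commutators — those from moving $Z^\alpha$ through $\partial_y$ (Lemmas \ref{L3}, \ref{L5}, \ref{L6}) and through the nonlinear products $(1+\rho)\partial_t{\bf u}$, $(1+\rho){\bf u}\cdot\nabla{\bf u}$, $(1+\rho)(({\bf G_i}+{\bf e_i})\cdot\nabla){\bf G_i}$, $\gamma p\,\hbox{div}\,{\bf u}$ — are estimated by the Moser-type inequality Lemma \ref{L1} and the $L^\infty$-control built into $Q(t)$, and are bounded by $(1+P(Q(t)))$ times $\|(\rho,{\bf u},{\bf G_1},{\bf G_2})\|_m^2+\|\nabla({\bf u},{\bf G_1},{\bf G_2})\|_{m-1}^2$. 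Integrating in time, summing over $|\alpha|\le m$, using Lemma \ref{L4} to recover $\varepsilon\|\nabla{\bf u}\|_m^2$, and invoking the equivalence of the energy functional with $\|(p-1,{\bf u},{\bf G_1},{\bf G_2})\|_m^2$ (valid once $\varepsilon_0$ and $\sigma_0$ are small) then yields \eqref{3.3a}.

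I expect the main obstacle to be controlling every contribution that involves a genuine normal derivative: since the viscosity is $O(\varepsilon)$, the dissipation is too weak to absorb $\|\nabla p\|_{m-1}$ or $\varepsilon\|\nabla^2{\bf u}\|_{m-1}$, so one must verify that such quantities either cancel in the principal part — which is precisely why the pressure $p$ and the constraint $\hbox{div}(\rho{\bf F}^{\top})=0$, rather than $\rho$ alone, must be used — or can be forced to appear only with the small coefficient $\delta$, and one must carefully track the powers of $\varepsilon$ through all the viscous commutators so that only $\delta\varepsilon^2\|\nabla^2{\bf u}\|_{m-1}^2$, and not an un-absorbable $\varepsilon\|\nabla^2{\bf u}\|_{m-1}^2$, is generated. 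The elastic cancellation (ii) is the structural novelty that makes the no-slip inviscid limit work, but once Proposition \ref{P1} is available it is a one-line computation; the bulk of the labor is the careful Moser and commutator estimates of the numerous nonlinear remainder terms.
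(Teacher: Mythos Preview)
Your proposal is correct and follows essentially the same route as the paper: applying $Z^\alpha$ to \eqref{3.1}, testing the momentum equation with $Z^\alpha{\bf u}$ and the ${\bf G_i}$-equations with $(1+\rho)Z^\alpha{\bf G_i}$, using $\hbox{div}((1+\rho){\bf F}^\top)=0$ for the elastic cancellation, the curl--div decomposition for the viscous dissipation, and the pressure form of the mass equation to close the $p$--${\bf u}$ coupling. The only cosmetic difference is that you test the $Z^\alpha$-pressure equation directly with $\tfrac{1}{\gamma p}Z^\alpha(p-1)$, whereas the paper first integrates the term $\int Z^\alpha\nabla p\cdot Z^\alpha{\bf u}$ by parts and then substitutes $Z^\alpha\hbox{div}\,{\bf u}=-\tfrac{1}{\gamma p}Z^\alpha(p-1)_t-\tfrac{{\bf u}}{\gamma p}\cdot Z^\alpha\nabla(p-1)+\text{commutators}$; the two computations are algebraically equivalent and lead to the same error terms.
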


\begin{proof} %\begin{pf}
Applying the conormal derivative operator $Z^\alpha (|\alpha|\leq m)$ to the system (\ref{3.1}) yields the following system:
%\begin{align}
%\label{3.2}
%\left\{
%\begin{array}{ll}
%\partial_tZ^\alpha\rho+Z^\alpha\nabla\cdot((1+\rho){\bf u})=0,\\
%(1+\rho)\partial_t Z^\alpha{\bf u}+((1+\rho){\bf u}\cdot\nabla)Z^\alpha{\bf u}-((1+\rho)({\bf G_1}+{\bf e_1})\cdot\nabla)Z^\alpha{\bf G_1}\\
%\quad \; -((1+\rho)({\bf G_2}+{\bf e_2})\cdot\nabla)Z^\alpha{\bf G_2}+Z^\alpha\nabla p=-\varepsilon\mu Z^\alpha\nabla\times\mathbb{R}^2_++\varepsilon(2\mu+\lambda)Z^\alpha\nabla\hbox{div}{\bf u}+\sum\limits_{i=1}^3\mathcal{C}_i^\alpha,\\
%\partial_tZ^\alpha{\bf G_1}+({\bf u}\cdot\nabla)Z^\alpha{\bf G_1}=(({\bf G_1}+{\bf e_1})\cdot\nabla)Z^\alpha{\bf u}+\mathcal{C}_4^\alpha,\\
%\partial_tZ^\alpha{\bf G_2}+({\bf u}\cdot\nabla)Z^\alpha{\bf G_2}=(({\bf G_2}+{\bf e_2})\cdot\nabla)Z^\alpha{\bf u}+\mathcal{C}_5^\alpha,
%\end{array}
%\right.
%\end{align}
\begin{equation}\label{3.2}
 \begin{cases}
\partial_tZ^\alpha\rho+Z^\alpha\nabla\cdot((1+\rho){\bf u})=0,\\
(1+\rho)\partial_t Z^\alpha{\bf u}+((1+\rho){\bf u}\cdot\nabla)Z^\alpha{\bf u}-((1+\rho)({\bf G_1}+{\bf e_1})\cdot\nabla)Z^\alpha{\bf G_1}\\
\quad \; -((1+\rho)({\bf G_2}+{\bf e_2})\cdot\nabla)Z^\alpha{\bf G_2}+Z^\alpha\nabla p=-\varepsilon\mu Z^\alpha\nabla\times\omega+\varepsilon(2\mu+\lambda)Z^\alpha\nabla\hbox{div}{\bf u}+\sum\limits_{i=1}^3\mathcal{C}_i^\alpha,\\
\partial_tZ^\alpha{\bf G_1}+({\bf u}\cdot\nabla)Z^\alpha{\bf G_1}=(({\bf G_1}+{\bf e_1})\cdot\nabla)Z^\alpha{\bf u}+\mathcal{C}_4^\alpha,\\
\partial_tZ^\alpha{\bf G_2}+({\bf u}\cdot\nabla)Z^\alpha{\bf G_2}=(({\bf G_2}+{\bf e_2})\cdot\nabla)Z^\alpha{\bf u}+\mathcal{C}_5^\alpha,
\end{cases}
\end{equation}
with
%\begin{align*}
%\left\{
%\begin{array}{ll}
%\mathcal{C}_1^\alpha&=-[Z^\alpha, (1+\rho)]{\bf u}_t=-\sum\limits_{|\beta|\geq 1, \beta+\kappa=\alpha}C_{\alpha\beta}Z^\beta\rho Z^\kappa{\bf u}_t,\\
%\mathcal{C}_2^\alpha&=-[Z^\alpha, (1+\rho){\bf u}\cdot\nabla]{\bf u}\\
%\quad&=-\sum\limits_{|\beta|\geq 1, \beta+\kappa=\alpha}C_{\alpha\beta}Z^\beta((1+\rho){\bf u}) Z^\kappa\nabla{\bf u}-(1+\rho){\bf u}\cdot[Z^\alpha, \nabla]{\bf u},\\
%\mathcal{C}_3^\alpha&=[Z^\alpha, (1+\rho)({\bf G_1}+{\bf e_1})\cdot\nabla]{\bf G_1}+[Z^\alpha, (1+\rho)({\bf G_2}+{\bf e_2})\cdot\nabla]{\bf G_2}\\
%\qquad&=\sum\limits_{|\beta|\geq 1, \beta+\kappa=\alpha}C_{\alpha\beta}Z^\beta((1+\rho)({\bf G_1}+{\bf e_1})) Z^\kappa\nabla{\bf G_1}+(1+\rho)({\bf G_1}+{\bf e_1})\cdot[Z^\alpha, \nabla]{\bf G_1}\\
%\qquad&+\sum\limits_{|\beta|\geq 1, \beta+\kappa=\alpha}C_{\alpha\beta}Z^\beta((1+\rho)({\bf G_2}+{\bf e_2})) Z^\kappa\nabla{\bf G_2}+(1+\rho)({\bf G_2}+{\bf e_2})\cdot[Z^\alpha, \nabla]{\bf G_2},
%\end{array}
%\right.
%\end{align*}
\begin{equation*}
\left\{
\begin{split}
 \mathcal{C}_1^\alpha&=-[Z^\alpha, (1+\rho)]{\bf u}_t=-\sum\limits_{|\beta|\geq 1, \beta+\kappa=\alpha}C_{\alpha\beta}Z^\beta\rho Z^\kappa{\bf u}_t,\\
\mathcal{C}_2^\alpha&=-[Z^\alpha, (1+\rho){\bf u}\cdot\nabla]{\bf u}\\
 &=-\sum\limits_{|\beta|\geq 1, \beta+\kappa=\alpha}C_{\alpha\beta}Z^\beta((1+\rho){\bf u}) Z^\kappa\nabla{\bf u}-(1+\rho){\bf u}\cdot[Z^\alpha, \nabla]{\bf u},\\
\mathcal{C}_3^\alpha&=[Z^\alpha, (1+\rho)({\bf G_1}+{\bf e_1})\cdot\nabla]{\bf G_1}+[Z^\alpha, (1+\rho)({\bf G_2}+{\bf e_2})\cdot\nabla]{\bf G_2}\\
 &=\sum\limits_{|\beta|\geq 1, \beta+\kappa=\alpha}C_{\alpha\beta}Z^\beta((1+\rho)({\bf G_1}+{\bf e_1})) Z^\kappa\nabla{\bf G_1}+(1+\rho)({\bf G_1}+{\bf e_1})\cdot[Z^\alpha, \nabla]{\bf G_1}\\
 &\quad +\sum\limits_{|\beta|\geq 1, \beta+\kappa=\alpha}C_{\alpha\beta}Z^\beta((1+\rho)({\bf G_2}+{\bf e_2})) Z^\kappa\nabla{\bf G_2}+(1+\rho)({\bf G_2}+{\bf e_2})\cdot[Z^\alpha, \nabla]{\bf G_2},
\end{split}
\right.
\end{equation*}
and
\begin{equation*}
\left\{
\begin{split}
\mathcal{C}_4^\alpha=&-[Z^\alpha, {\bf u}\cdot\nabla]{\bf G_1}+[Z^\alpha, ({\bf G_1}+{\bf e_1})\cdot\nabla]{\bf u}\\
 =&-\sum\limits_{|\beta|\geq 1, \beta+\kappa=\alpha}C_{\alpha\beta}Z^\beta{\bf u} Z^\kappa\nabla{\bf G_1}-{\bf u}\cdot[Z^\alpha, \nabla]{\bf G_1}\\
 &+\sum\limits_{|\beta|\geq 1, \beta+\kappa=\alpha}C_{\alpha\beta}Z^\beta({\bf G_1}+{\bf e_1}) Z^\kappa\nabla{\bf u}-({\bf G_1}+{\bf e_1})\cdot[Z^\alpha, \nabla]{\bf u},\\
\mathcal{C}_5^\alpha=&-[Z^\alpha, {\bf u}\cdot\nabla]{\bf G_2}+[Z^\alpha, ({\bf G_2}+{\bf e_2})\cdot\nabla]{\bf u}\\
 =&-\sum\limits_{|\beta|\geq 1, \beta+\kappa=\alpha}C_{\alpha\beta}Z^\beta{\bf u} Z^\kappa\nabla{\bf G_2}-{\bf u}\cdot[Z^\alpha, \nabla]{\bf G_2}\\
 &+\sum\limits_{|\beta|\geq 1, \beta+\kappa=\alpha}C_{\alpha\beta}Z^\beta({\bf G_2}+{\bf e_2}) Z^\kappa\nabla{\bf u}-({\bf G_2}+{\bf e_2})\cdot[Z^\alpha, \nabla]{\bf u}.
\end{split}
\right.
\end{equation*}
%\begin{align*}
%\left\{
%\begin{array}{ll}
%\mathcal{C}_4^\alpha=-[Z^\alpha, {\bf u}\cdot\nabla]{\bf G_1}+[Z^\alpha, ({\bf G_1}+{\bf e_1})\cdot\nabla]{\bf u}\\
%\qquad=-\sum\limits_{|\beta|\geq 1, \beta+\kappa=\alpha}C_{\alpha\beta}Z^\beta{\bf u} Z^\kappa\nabla{\bf G_1}-{\bf u}\cdot[Z^\alpha, \nabla]{\bf G_1}\\
%\qquad+\sum\limits_{|\beta|\geq 1, \beta+\kappa=\alpha}C_{\alpha\beta}Z^\beta({\bf G_1}+{\bf e_1}) Z^\kappa\nabla{\bf u}-({\bf G_1}+{\bf e_1})\cdot[Z^\alpha, \nabla]{\bf u}\\
%\mathcal{C}_5^\alpha=-[Z^\alpha, {\bf u}\cdot\nabla]{\bf G_2}+[Z^\alpha, ({\bf G_2}+{\bf e_2})\cdot\nabla]{\bf u}\\
%\qquad=-\sum\limits_{|\beta|\geq 1, \beta+\kappa=\alpha}C_{\alpha\beta}Z^\beta{\bf u} Z^\kappa\nabla{\bf G_2}-{\bf u}\cdot[Z^\alpha, \nabla]{\bf G_2}\\
%\qquad\quad+\sum\limits_{|\beta|\geq 1, \beta+\kappa=\alpha}C_{\alpha\beta}Z^\beta({\bf G_2}+{\bf e_2}) Z^\kappa\nabla{\bf u}-({\bf G_2}+{\bf e_2})\cdot[Z^\alpha, \nabla]{\bf u}.
%\end{array}
%\right.
%\end{align*}
Multiplying the second equation in (\ref{3.2}) by $Z^\alpha {\bf u}$, the third equation by $(1+\rho) Z^\alpha {\bf G_1}$, and the fourth equation by $(1+\rho) Z^\alpha {\bf G_2}$, adding the resulting equations together, and integrating them over $\mathbb{R}^2_+$, we have
\begin{align}
\label{3.3}
&\frac{d}{dt}\int\frac12(1+\rho)(|Z^\alpha{\bf u}|^2+|Z^\alpha{\bf G_1}|^2+|Z^\alpha{\bf G_2}|^2)d{\bf x}+\int Z^\alpha\nabla p\cdot Z^\alpha {\bf u} d{\bf x}\nonumber\\
=&-\mu\varepsilon\int Z^\alpha\nabla\times \omega\cdot Z^\alpha{\bf u}d{\bf x}+(2\mu+\lambda)\varepsilon\int Z^\alpha\nabla\hbox{div}{\bf u}\cdot Z^\alpha{\bf u}d{\bf x}\\
&+\int(\mathcal{C}_1^\alpha+\mathcal{C}_2^\alpha+\mathcal{C}_3^\alpha)\cdot Z^\alpha{\bf u}d{\bf x}+\int(1+\rho) \mathcal{C}_4^\alpha\cdot Z^\alpha{\bf G_1}d{\bf x}+\int(1+\rho)  \mathcal{C}_5^\alpha\cdot Z^\alpha{\bf G_2}d{\bf x}\nonumber,
\end{align}
where the  integration by parts, the boundary conditions (\ref{3.2a})  and the following facts are used:
\begin{align*}
\partial_t\rho+\nabla\cdot((1+\rho){\bf u})=0,\qquad \hbox{div}((1+\rho){\bf F}^T)=0,
\end{align*}
due to the first equation in (\ref{3.1}) and Proposition \ref{P1}.  %And the boundary conditions (\ref{3.2a}) are also used.

Notice that
\begin{align*}
&-\varepsilon\int Z^\alpha\nabla\times \omega\cdot Z^\alpha{\bf u}d{\bf x}\\
=&-\varepsilon\int \nabla\times Z^\alpha\omega\cdot Z^\alpha{\bf u}d{\bf x}-\varepsilon\int [Z^\alpha, \nabla\times]\omega\cdot Z^\alpha{\bf u}d{\bf x}\\
\leq& -\varepsilon\int Z^\alpha\omega \nabla\times Z^\alpha{\bf u}d{\bf x}+C\varepsilon\|\nabla^2{\bf u}\|_{m-1}\|{\bf u}\|_m\\
=& -\varepsilon\int Z^\alpha\omega Z^\alpha\nabla\times {\bf u}d{\bf x}-\varepsilon\int Z^\alpha\omega [Z^\alpha, \nabla\times]{\bf u}d{\bf x}+C\varepsilon\|\nabla^2{\bf u}\|_{m-1}\|{\bf u}\|_m\\
\leq& -\varepsilon\int |Z^\alpha\omega|^2d{\bf x}+C\varepsilon\|\nabla^2{\bf u}\|_{m-1}\|{\bf u}\|_m+C\varepsilon\|\nabla{\bf u}\|_{m-1}\|\nabla{\bf u}\|_m\\
\leq& -\varepsilon \|\nabla\times Z^\alpha{\bf u}\|^2+\delta\varepsilon^2\|\nabla^2{\bf u}\|^2_{m-1}+\delta\varepsilon\|\nabla{\bf u}\|^2_m+C_\delta\varepsilon\|\nabla{\bf u}\|^2_{m-1}+C_\delta\|{\bf u}\|^2_m,
\end{align*}
for some small $\delta>0$ to be determined later, where for the first and second inequalities Lemmas \ref{L3} and \ref{L4} are used.
Similarly, one has
\begin{align*}
&\varepsilon\int Z^\alpha\nabla\hbox{div}{\bf u}\cdot Z^\alpha{\bf u}d{\bf x}\\
= &\varepsilon\int \nabla Z^\alpha\hbox{div}{\bf u}\cdot Z^\alpha{\bf u}d{\bf x}+\varepsilon\int [Z^\alpha, \nabla] \hbox{div}{\bf u}\cdot Z^\alpha{\bf u}d{\bf x}\\
\leq &-\varepsilon\int Z^\alpha\hbox{div}{\bf u}\cdot \hbox{div}Z^\alpha{\bf u}d{\bf x}+\varepsilon\|\nabla^2{\bf u}\|_{m-1}\|{\bf u}\|_{m}\\
\leq &-\varepsilon\| \hbox{div}Z^\alpha{\bf u}\|^2+\delta\varepsilon^2\|\nabla^2{\bf u}\|^2_{m-1}+\delta\varepsilon\|\nabla{\bf u}\|^2_m+C_\delta(\varepsilon\|\nabla{\bf u}\|_{m-1}^2+\|{\bf u}\|_{m}^2).
\end{align*}
Combining (\ref{3.3}) and the following inequality
\begin{align*}
2c_1\|\nabla Z^\alpha{\bf u}\|_{L^2}^2\lesssim \mu\|\nabla\times Z^\alpha{\bf u}\|_{L^2}^2+(2\mu+\lambda)\|\hbox{div} Z^\alpha{\bf u}\|_{L^2}^2,
\end{align*}
where $c_1$ is a generic constant, we obtain
\begin{align}
\label{3.4}
&\int\frac12(1+\rho)(|Z^\alpha{\bf u}|^2+|Z^\alpha{\bf G_1}|^2+|Z^\alpha{\bf G_2}|^2)d{\bf x}+c_1\varepsilon\int_0^t\|\nabla Z^\alpha{\bf u}\|_{L^2}^2d\tau \nonumber\\
&\qquad +\int_0^t\int Z^\alpha\nabla p\cdot Z^\alpha {\bf u} d{\bf x}d\tau\nonumber\\
\lesssim&\int\frac12(1+\rho_0)(|Z^\alpha{\bf u}|^2(0)+|Z^\alpha{\bf G_1}|^2(0)+|Z^\alpha{\bf G_2}(0)|^2)d{\bf x}\\
&\qquad+\delta\varepsilon^2\int_0^t\|\nabla^2{\bf u}(\tau)\|_{m-1}^2d\tau+\delta\varepsilon\int_0^t\|\nabla{\bf u}(\tau)\|_{m}^2d\tau+C_\delta\varepsilon\int_0^t\|\nabla{\bf u}(\tau)\|_{m-1}^2d\tau \nonumber\\
&\qquad+C_\delta\int_0^t\|{\bf u}(\tau)\|_{m}^2d\tau+\int_0^t(\|\mathcal{C}_1^\alpha\|^2+\|\mathcal{C}_2^\alpha\|^2+\|\mathcal{C}_3^\alpha\|^2+\|\mathcal{C}_4^\alpha\|^2+\|\mathcal{C}_5^\alpha\|^2)d\tau \nonumber\\
&\qquad +C\int_0^t(\| Z^\alpha{\bf u}\|^2+\| Z^\alpha{\bf G_1}\|^2 +\|Z^\alpha{\bf G_2}\|)^2  d\tau\nonumber,
\end{align}
where the {\it a priori} assumption of $\|\rho\|_{L^\infty}\leq 1/2$ is used.

Next, we handle the term  involving the pressure. First,
\begin{align*}
&\int_0^t\int Z^\alpha\nabla p\cdot Z^\alpha{\bf u}d{\bf x}d\tau=\int_0^t\int Z^\alpha\nabla (p-1)\cdot Z^\alpha{\bf u}d{\bf x}d\tau\\
=&\int_0^t\int \nabla Z^\alpha (p-1)\cdot Z^\alpha{\bf u}d{\bf x}d\tau+\int_0^t\int [Z^\alpha, \nabla] (p-1)\cdot Z^\alpha{\bf u}d{\bf x}d\tau\\
\geq &-\int_0^t\int Z^\alpha (p-1)\cdot \hbox{div}Z^\alpha{\bf u}d{\bf x}d\tau-\int_0^t\int \|{\bf u}\|_m\|\nabla p\|_{m-1}d\tau\\
\geq &-\int_0^t\int Z^\alpha (p-1)\cdot Z^\alpha\hbox{div}{\bf u}d{\bf x}d\tau-\delta\int_0^t\int \|\nabla p\|^2_{m-1}d\tau\\
&-C_\delta\int_0^t(\|p-1\|_m^2+\| {\bf u}\|_m^2+\|\nabla{\bf u}\|_{m-1}^2)d\tau.
\end{align*}
Then, it follows from the first equation in (\ref{3.1}) that
\begin{align*}
\hbox{div}{\bf u}=-\frac{p_t}{\gamma p}-\frac{{\bf u}}{\gamma p}\cdot \nabla p=-\frac{(p-1)_t}{\gamma p}-\frac{{\bf u}}{\gamma p}\cdot \nabla (p-1).
\end{align*}
Applying the operator $Z^\alpha (|\alpha|\leq m)$ on the above equation gives
\begin{align*}
Z^\alpha\hbox{div}{\bf u}=&-\frac{Z^\alpha (p-1)_t}{\gamma p}-\frac{{\bf u}}{\gamma p}\cdot Z^\alpha\nabla (p-1)\\
&-\sum\limits_{|\beta|\geq 1, \beta+\kappa=\alpha}C_{\alpha\beta}Z^\beta\left(\frac{1}{\gamma p}\right)Z^\kappa (p-1)_t\\
&-\sum\limits_{|\beta|\geq 1, \beta+\kappa=\alpha}C_{\alpha\beta}Z^\beta\left(\frac{{\bf u}}{\gamma p}\right)Z^\kappa \nabla (p-1).
\end{align*}
Now we deal with the above right hand side term by term as follows. For the first term, one has,
\begin{align*}
&\int_0^t\int Z^\alpha (p-1)\cdot \frac{Z^\alpha (p-1)_t}{\gamma p}d{\bf x}d\tau\\
=&\int_0^t\int \left(\frac{|Z^\alpha (p-1)|^2}{2\gamma p}\right)_td{\bf x}d\tau-\int_0^t\int |Z^\alpha (p-1)|^2\left(\frac{1}{2\gamma p}\right)_td{\bf x}d\tau\\
\geq& \int \frac{|Z^\alpha (p-1)(t)|^2}{2\gamma p(t)}d{\bf x}-\int \frac{|Z^\alpha (p-1)(0)|^2}{2\gamma p(0)}d{\bf x}-C\left\|\frac{p_t}{p^2}\right\|_{L^\infty}\int_0^t\|Z^\alpha(p-1)\|^2d\tau,\\
\geq& \int \frac{|Z^\alpha (p-1)(t)|^2}{2\gamma p(t)}d{\bf x}-\int \frac{|Z^\alpha (p-1)(0)|^2}{2\gamma p(0)}d{\bf x}-(1+P(Q(t)))\int_0^t\|Z^\alpha(p-1)\|^2d\tau,
\end{align*}
where and hereafter we use the {\it a priori} assumption that $\|\rho\|_{L^\infty}\leq 1/2$, which will be justified later by choosing $\sigma_0$ in Theorem \ref{Thm1} suitably small; for the second term,
\begin{align*}
& \int_0^t\int Z^\alpha (p-1)\frac{{\bf u}}{\gamma p}\cdot Z^\alpha \nabla (p-1) d{\bf x}d\tau\\
=&\int_0^t\int Z^\alpha (p-1) \nabla Z^\alpha (p-1)\cdot \frac{{\bf u}}{\gamma p} d{\bf x}d\tau+\int_0^t\int Z^\alpha (p-1) \frac{{\bf u}}{\gamma p} [\nabla, Z^\alpha ](p-1) d{\bf x}d\tau\\
\geq&-\int\nabla\cdot\frac{{\bf u}}{2\gamma p} |Z^\alpha (p-1)(t)|^2d{\bf x}-\delta \|\nabla p\|_{m-1}^2d\tau-C_\delta\left\|\frac{{\bf u}}{p}\right\|^2_{L^\infty}\int_0^t\|(p-1)(\tau)\|^2_md\tau\\
\geq&-\delta \int_0^t\|\nabla p\|_{m-1}^2d\tau-C_\delta(1+P(Q(t)))\int_0^t\|(p-1)(\tau)\|^2_md\tau,
\end{align*}
for the third term, from direct calculations we have
\begin{align*}
&\sum\limits_{|\beta|\geq 1, \beta+\kappa=\alpha}C_{\alpha\beta}\int_0^t\int Z^\alpha (p-1) Z^\beta\left(\frac{1}{\gamma p}\right)Z^\kappa (p-1)_t\\
\geq  &-C\|p\|_{1,\infty}\int_0^t\|(p-1)(\tau)\|_m\|Z^\alpha (p-1)(\tau)\|_{L^2}d\tau\\
\geq & -(1+P(Q(t)))\int_0^t\|(p-1)(\tau)\|_m\|Z^\alpha (p-1)(\tau)\|_{L^2}d\tau\\
\geq & -(1+P(Q(t)))\int_0^t\|(p-1)(\tau)\|_m^2d\tau,
\end{align*}
where in the first inequality Lemma \ref{L1} is used;
and similarly for the fourth term,
\begin{align*}
&\sum\limits_{|\beta|\geq 1, \beta+\kappa=\alpha}C_{\alpha\beta}\int_0^t\int Z^\alpha (p-1) Z^\beta\left(\frac{{\bf u}}{\gamma p}\right)Z^\kappa \nabla (p-1)\\
\geq &-C\|\frac{{\bf u}}{\gamma p}\|_{1,\infty}\int_0^t\|\nabla p(\tau)\|_{m-1}\|Z^\alpha (p-1)(\tau)\|_{L^2}d\tau\\
&-C\|p\|_{1,\infty}\int_0^t\|\frac{{\bf u}}{p}(\tau)\|_{m}\|Z^\alpha (p-1)(\tau)\|_{L^2}d\tau\\
\geq  & -\delta \int_0^t\|\nabla p\|_{m-1}^2d\tau-C_\delta(1+P(Q(t)))\int_0^t(\|{\bf u}(\tau)\|_m^2+\|(p-1)(\tau)\|^2_m)d\tau.
\end{align*}
Next, we  estimate the terms involving $\mathcal{C}_i^\alpha\ (i=1,...,5)$ in \eqref{3.4} as follows. First,  we have the following estimates,
\begin{align*}
\int_0^t\|\mathcal{C}_1^\alpha\|^2d\tau\lesssim& \sum\limits_{|\beta|\geq 1, \beta+\kappa=\alpha}\int_0^t\|Z^\beta(1+\rho) Z^\kappa {\bf u}_t\|^2d\tau\\
\lesssim &\|\rho\|^2_{1,\infty}\int_0^t\|{\bf u}_t\|_{m-1}^2d\tau+\|{\bf u}_t\|^2_{L^\infty}\int_0^t\|\rho\|_{m}^2d\tau\\
\lesssim &(1+P(Q(t)))\int_0^t(\|\rho\|_m^2+\|{\bf u})\|_m^2)d\tau,
\end{align*}
and
\begin{align*}
\int_0^t\|\mathcal{C}_2^\alpha\|^2d\tau\lesssim& \sum\limits_{|\beta|\geq 1, \beta+\kappa=\alpha}\int_0^t\|Z^\beta((1+\rho){\bf u}) Z^\kappa \nabla {\bf u}\|^2d\tau+\|(1+\rho){\bf u}\|^2_{L^\infty}\int_0^t\|\nabla {\bf u}\|_{m-1}^2d\tau\\
\lesssim &(1+P(Q(t)))\int_0^t(\|\nabla {\bf u}(\tau)\|_{m-1}^2+\|\rho(\tau)\|_m^2+\| {\bf u}(\tau)\|_m^2)d\tau.
\end{align*}
Similarly, one has,
\begin{align*}
\int_0^t\|\mathcal{C}_3^\alpha\|^2d\tau \lesssim &(1+P(Q(t)))\int_0^t(\|\nabla {\bf G_1}(\tau)\|_{m-1}^2+\| \nabla {\bf G_2}(\tau)\|_{m-1}^2)d\tau\\
& +(1+P(Q(t)))\int_0^t(\|\rho(\tau)\|_m^2+\|{\bf G_1}(\tau)\|_m^2+ \|{\bf G_2}(\tau)\|_m^2)d\tau,
\end{align*}
and
\begin{align*}
&\int_0^t(\|\mathcal{C}_4^\alpha\|^2+\|\mathcal{C}_5^\alpha\|^2)d\tau\\
\lesssim &(1+P(Q(t)))\int_0^t(\|\nabla {\bf u}(\tau)\|_{m-1}^2+\|\nabla {\bf G_1}(\tau)\|_{m-1}^2+\|\nabla {\bf G_2}(\tau)\|_{m-1}^2)d\tau\\
&+(1+P(Q(t)))\int_0^t(\|{\bf u}(\tau)\|_m^2+\| {\bf G_1}(\tau)\|_m^2+\| {\bf G_2}(\tau)\|_m^2)d\tau.
\end{align*}

Substituting  all of the above estimates into \eqref{3.4}, we obtain
\begin{align}
\label{aaa}
&\int\frac12(1+\rho)(|Z^\alpha{\bf u}|^2+|Z^\alpha{\bf G_1}|^2+|Z^\alpha{\bf G_2}|^2)d{\bf x}\nonumber\\
&\qquad +\int \frac{|Z^\alpha (p-1)(t)|^2}{2\gamma p(t)}d{\bf x}+c_1\varepsilon\int_0^t\|\nabla Z^\alpha{\bf u}\|_{L^2}^2d\tau\nonumber\\
\lesssim&\int\frac12(1+\rho_0)(|Z^\alpha{\bf u}|^2(0)+|Z^\alpha{\bf G_1}|^2(0)+|Z^\alpha{\bf G_2}(0)|^2)d{\bf x}+\int \frac{|Z^\alpha (p-1)(0)|^2}{2\gamma p(0)}d{\bf x}\nonumber\\
&+\delta \int_0^t\|\nabla p(\tau)\|_{m-1}^2d\tau+C_\delta(1+P(Q(t)))\int_0^t\|(p-1)(\tau)\|^2_md\tau\\
&+\delta\varepsilon^2\int_0^t\|\nabla^2{\bf u}(\tau)\|_{m-1}^2d\tau+\delta\varepsilon\int_0^t\|\nabla{\bf u}(\tau)\|_{m}^2d\tau
\nonumber\\
&+(1+P(Q(t)))\int_0^t(\|\nabla {\bf u}(\tau)\|_{m-1}^2+\|\nabla {\bf G_1}(\tau)\|_{m-1}^2+\|\nabla {\bf G_2}(\tau)\|_{m-1}^2d\tau\nonumber\\
&+(1+P(Q(t)))\int_0^t(\|\rho(\tau)\|_m^2+\|{\bf u}(\tau)\|_m^2+\| {\bf G_1}(\tau)\|_m^2+\| {\bf G_2}(\tau)\|_m^2)d\tau\nonumber.
\end{align}
Summing (\ref{aaa}) over $|\alpha|\leq m$, choosing $\delta$ suitably small and using mathematical induction arguments, we have
\begin{align}
\label{bbb}
&\|(p-1, {\bf u}, {\bf G_1}, {\bf G_2})(t)\|_m^2+\varepsilon\int_0^t\|\nabla{\bf u}(\tau)\|_{m}^2d\tau\nonumber\\
\lesssim&\|(p-1, {\bf u}, {\bf G_1}, {\bf G_2})(0)\|_m^2+\delta \int_0^t\|\nabla p(\tau)\|_{m-1}^2d\tau+\delta\varepsilon^2\int_0^t\|\nabla^2{\bf u}(\tau)\|_{m-1}^2d\tau\\
&+(1+P(Q(t)))\int_0^t(\|\nabla {\bf u}(\tau)\|_{m-1}^2+\|\nabla {\bf G_1}(\tau)\|_{m-1}^2+\|\nabla {\bf G_2}(\tau)\|_{m-1}^2d\tau\nonumber\\
&+(1+P(Q(t)))\int_0^t(\|\rho(\tau)\|_m^2+\|{\bf u}(\tau)\|_m^2+\| {\bf G_1}(\tau)\|_m^2+\| {\bf G_2}(\tau)\|_m^2)d\tau\nonumber,
\end{align}
where the following fact of equivalence is used:
\begin{align}
\label{PPP}
C^{-1}\|\rho\|_m^2\leq \|p-1\|_m^2 \leq C\|\rho\|_m^2
\end{align}
holds for some generic constant $C>1$, due to (\ref{P}) and the {\it a priori} assumption that $\|\rho\|_{L^\infty}\leq 1/2$.
Therefore, the proof of Proposition \ref{P3.1} is completed.
\end{proof}  %\end{pf}

 To close the energy estimates, it  suffices to derive the estimates of $Q(t),\ \|\nabla({\bf u}, {\bf G_1}, {\bf G_2})\|_{m-1}$ and $\|\nabla p\|_{m-1}$, which is the main task  in the next  section.

\bigskip

\section{Estimates of Normal Derivatives}

To estimate $\|\nabla({\bf u}, {\bf G_1}, {\bf G_2}, p)\|_{m-1}$, it suffices to estimate $\|\partial_y({\bf u}, {\bf G_1}, {\bf G_2}, p)\|_{m-1}$,  since
$\|\partial_x({\bf u}, {\bf G_1}, {\bf G_2}, p)\|_{m-1}\leq \|({\bf u}, {\bf G_1}, {\bf G_2}, p-1)\|_{m}$ as $\partial_x=Z_1$,.
In this  section, we focus on the estimates of the normal derivatives for $({\bf u}, {\bf G_1}, {\bf G_2})$ and $p$. We will derive the conormal estimates for both the first and second order normal derivatives of each component for $({\bf u}, {\bf G_1}, {\bf G_2})$ and $p$ in the subsequent subsections.

\begin{pro}
\label{P4.1}
Under the assumptions in Theorem \ref{Thm1}, there exists a sufficiently small $\varepsilon_0>0$, such that for any $0<\varepsilon<\varepsilon_0$, the smooth solution $(\rho, {\bf u}, {\bf G_1}, {\bf G_2})$ to (\ref{3.1})-(\ref{3.2a}) satisfies the following {\it a priori} estimate:
\begin{align}
\label{4.3a}
&\|(p-1, {\bf u}, {\bf G_1}, {\bf G_2})(t)\|_m^2+\varepsilon(\|\partial_yf_2(t)\|_{m-1}^2+\|\partial_y^2f_2(t)\|_{m-2}^2+\|\partial_yp(t)\|_{m-1}^2+\|\partial_y^2p(t)\|_{m-2}^2)\nonumber\\
&+\varepsilon\int_0^t\|\nabla{\bf u}(\tau)\|_{m}^2d\tau+\int_0^t(\|\partial_y p(\tau)\|_{m-1}^2+\|\partial_y {\bf u}(\tau)\|_{m-1}^2+\|\partial_y {\bf G_1}(\tau)\|_{m-1}^2+\|\partial_y {\bf G_2}(\tau)\|_{m-1}^2)d\tau\nonumber\\
&+\int_0^t(\|\partial^2_y p(\tau)\|_{m-2}^2+\|\partial^2_y {\bf u}(\tau)\|_{m-2}^2+\|\partial^2_y {\bf G_1}(\tau)\|_{m-2}^2+\|\partial^2_y {\bf G_2}(\tau)\|_{m-2}^2)d\tau\nonumber\\
&+\varepsilon^2\int_0^t(\|\partial^2_y u(\tau)\|_{m-1}^2+\|\partial^3_y u(\tau)\|_{m-2}^2+\|\partial^2_y v(\tau)\|_{m-1}^2+\|\partial^3_y v(\tau)\|_{m-2}^2)d\tau\nonumber\\
\lesssim& \|(p-1, {\bf u}, {\bf G_1}, {\bf G_2})(0)\|_m^2+\  \varepsilon(\|\partial_yf_2(0)\|_{m-1}^2+\|\partial_y^2f_2(0)\|_{m-2}^2+\|\partial_yp(0)\|_{m-1}^2+\|\partial_y^2p(0)\|_{m-2}^2)\nonumber\\
&+(1+P(Q(t)))\int_0^t\big(P(\|\rho(\tau)\|_m)+\|{\bf u}(\tau)\|_m^2+P(\| {\bf G_1}(\tau)\|_m)+P(\| {\bf G_2}(\tau)\|_m)\big)d\tau,
\end{align}
where
\begin{align*}
Q(t)=\sup\limits_{0\leq \tau\leq t}\{\|(p-1, {\bf u}, {\bf G_1}, {\bf G_2})(\tau)\|_{1, \infty}+\|(\nabla p, \nabla {\bf u}, \nabla {\bf G_1}, \nabla {\bf G_2})(\tau)\|_{1, \infty}\},
\end{align*}
and $P(\cdot)$ is a polynomial.
\end{pro}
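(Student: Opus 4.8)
The plan is to carry out exactly the four-step program announced in the introduction, now at the level of the conormal $H^{m-1}$ (and $H^{m-2}$) norms of the normal derivatives, and then assemble the pieces with a suitably small coefficient $\delta$ so that the new left-hand terms absorb the $\delta\|\nabla p\|_{m-1}^2$ and $\delta\varepsilon^2\|\nabla^2{\bf u}\|_{m-1}^2$ errors left over from Proposition \ref{P3.1}. First I would record the algebraic consequences of the structural constraints: from the $\mathbf{G_2}$-equations in \eqref{3.7} one solves $\partial_y v$ and $\partial_y u$ in terms of conormal derivatives of $f_4,f_2$ (as displayed in Step I), using the \emph{a priori} lower bound $1+f_4\geq c>0$; from $\operatorname{div}((1+\rho)\mathbf{F}^\top)=0$ one gets $(1+\rho)(1+f_4)\partial_y f_4=-\partial_y\rho+\dots$ and the companion identity for $\partial_y f_3$; and from $\rho\det\mathbf{F}=1$ one gets the formula for $\partial_y f_1$. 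Each of these is differentiated by $Z^\alpha$, $|\alpha|\leq m-1$, and estimated in $L^2$ with Lemma \ref{L1}; the outcome is that $\|\partial_y v\|_{m-1},\|\partial_y u\|_{m-1},\|\partial_y f_3\|_{m-1},\|\partial_y f_4\|_{m-1},\|\partial_y f_1\|_{m-1}$ are all controlled by $(1+P(Q(t)))\big(\|({\bf u},{\bf G_1},{\bf G_2})\|_m+\|\partial_y(p,\rho)\|_{m-1}+\|\partial_y f_2\|_{m-1}\big)$, so that \emph{all} normal derivatives reduce to those of $f_2$ and of $p$ (equivalently $\rho$), plus conormal norms already controlled by Proposition \ref{P3.1}.

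The two genuine energy estimates are Steps II and III, and these I would do by the cancellation-of-mixed-terms device described in the introduction. For Step II: take the $u$-equation from \eqref{3.7}, isolate $(1+\rho)(1+f_4)\partial_y f_2+\varepsilon\mu\,\partial_y^2 u = R_u$ where $R_u$ collects lower-order (conormal-controlled) terms; apply $Z^\alpha$, $|\alpha|\le m-1$, square and integrate, producing the harmful cross term $2\mu\varepsilon\int (1+\rho)(1+f_4)Z^\alpha\partial_y f_2\,Z^\alpha\partial_y^2 u$; then apply $\partial_y Z^\alpha$ to the $f_2$-equation, multiply by $2\mu\varepsilon\,(1+\rho)(1+f_4)\,Z^\alpha\partial_y f_2$ (or the appropriate weight), integrate in time, and observe that the resulting cross term is the \emph{same} quantity with opposite sign up to commutators; adding yields $\frac{d}{dt}$ of a positive combination of $\|\sqrt{(1+\rho)(1+f_4)}\,Z^\alpha\partial_y f_2\|^2$ (and the time-integrated $\varepsilon\|Z^\alpha\partial_y^2 u\|^2$, using $\varepsilon\|\nabla Z^\alpha{\bf u}\|^2$ from Proposition \ref{P3.1}), plus controlled remainders — this is where the lower bounds $1+\rho\ge c$, $1+f_4\ge c$ from the smallness assumption \eqref{CCCC} are essential. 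Step III is the same scheme applied to the $v$-equation and the $\rho$-equation: write $\partial_y p-(2\mu+\lambda)\varepsilon\partial_y^2 v = R_v$ and $\partial_t p+\gamma p\,\partial_y v = R_\rho$, use the identity $(1+\rho)(1+f_4)\partial_y f_4=-\partial_y\rho+\dots$ to convert the $\partial_y f_4$ term in $R_v$ into a multiple of $\partial_y p$ which is then merged into the left side, producing the coefficient $\big(1+\tfrac{(1+f_4)^2}{\gamma p^{\gamma-1}}\big)$; the mixed term $\varepsilon\int(\cdots)\partial_y p\,\partial_y^2 v$ cancels against the one coming from $\partial_y Z^\alpha$ applied to the $\rho$-equation (using $\operatorname{div}{\bf u}=-\tfrac{p_t}{\gamma p}-\dots$, i.e.\ $\partial_y v$ expressed through $p_t$ and tangential terms), and one reads off the $L^2$ control of $\big(1+\tfrac{(1+f_4)^2}{\gamma p^{\gamma-1}}\big)Z^\alpha\partial_y p$ and of $\varepsilon\,Z^\alpha\partial_y^2 v$.

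After Steps I--IV at order $m-1$, I would repeat the identical four steps applied to $Z^\alpha\partial_y$ with $|\alpha|\le m-2$ (i.e.\ one more $\partial_y$), exactly as the introduction indicates, to get the $\|\partial_y^2(\cdot)\|_{m-2}$ and $\varepsilon^2\|\partial_y^3 u\|_{m-2}^2$, $\varepsilon^2\|\partial_y^3 v\|_{m-2}^2$ terms on the left of \eqref{4.3a}; the algebraic reductions of Step I propagate because differentiating the $f_4,f_3,f_1$ formulas once more only produces terms already present at the previous level plus $\partial_y^2 f_2$, $\partial_y^2 p$, $\partial_y^2\rho$, which are the quantities Steps II and III (one derivative up) are estimating. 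Finally, summing all these estimates over the relevant multi-indices, collecting the genuinely dissipative/positive left-hand terms, choosing $\delta$ small enough that the $\delta\|\nabla p\|_{m-1}^2$ and $\delta\varepsilon^2\|\nabla^2{\bf u}\|_{m-1}^2$ contributions are absorbed (here one uses $\|\nabla p\|_{m-1}\le\|p-1\|_m+\|\partial_y p\|_{m-1}$, and similarly for $\nabla^2{\bf u}$ via $\partial_y^2 u,\partial_y^2 v$ plus tangential derivatives and Lemma \ref{L4}), and using the equivalence \eqref{PPP} between $\|\rho\|_m$ and $\|p-1\|_m$, yields \eqref{4.3a} after a Gronwall-type/induction bookkeeping in $|\alpha|$. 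The main obstacle is the bookkeeping in Steps II and III: one must set up the two paired equations with precisely matching weights so that the $O(\varepsilon)$ mixed second-derivative terms cancel exactly, keep careful track that every commutator generated (from $[Z^\alpha,\partial_y]$ via Lemma \ref{L3}, from $[Z^\alpha,1/\varphi]$ via Lemma \ref{L6}, and from the variable coefficients $1+\rho$, $1+f_4$, $1/(\gamma p)$) lands in a term of the form $(1+P(Q(t)))\times(\text{already-controlled conormal norm})$, and verify that no uncontrolled boundary term appears — the boundary conditions ${\bf u}|_{y=0}=0$ and $f_3|_{y=0}=0$ (from \eqref{3.2a}) must be invoked to kill the boundary contributions arising when integrating by parts in $y$ in the $\partial_y^2$ estimates.
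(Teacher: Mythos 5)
Your proposal follows essentially the same route as the paper's own proof: recovering $\partial_y v$ and $\partial_y u$ from the deformation-tensor equations \eqref{3.7} under the lower bound on $1+f_4$, the $\varepsilon$-weighted cancellation pairing of the $u$-momentum equation with the $f_2$-equation for $\partial_y f_2$ and of the $v$-momentum and mass equations for $\partial_y p$ (with $\hbox{div}((1+\rho){\bf F}^\top)=0$ converting $\partial_y f_4$ into $\partial_y p$ and producing the weight $1+\frac{(1+f_4)^2}{\gamma(1+\rho)^{\gamma-1}}$), the algebraic recovery of $\partial_y f_1,\partial_y f_3,\partial_y f_4$ from the constraints, repetition at second order, and absorption of the $\delta$-terms from Proposition \ref{P3.1}. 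The only slips are superficial: the relation $(1+\rho)(1+f_4)\partial_y f_2+\mu\varepsilon\partial_y^2u=R_u$ comes from the momentum system \eqref{3.15}, not \eqref{3.7}, and your coefficient should read $\gamma(1+\rho)^{\gamma-1}$ rather than $\gamma p^{\gamma-1}$.
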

\subsection{Estimates of $\partial_y v$ and $\partial^2_y v$}
To estimate the normal derivatives of each component, it is convenient to rewrite the equations of $({\bf G_1}, {\bf G_2})$ in (\ref{3.1}) as
\begin{align}
\label{3.7}
\left\{
\begin{array}{ll}
\partial_tf_1+u\partial_xf_1+v\partial_yf_1=(1+f_1)\partial_xu+f_3\partial_yu,\\
\partial_tf_2+u\partial_xf_2+v\partial_yf_2=f_2\partial_xu+(1+f_4)\partial_yu,\\
\partial_tf_3+u\partial_xf_3+v\partial_yf_3=(1+f_1)\partial_xv+f_3\partial_yv,\\
\partial_tf_4+u\partial_xf_4+v\partial_yf_4=f_2\partial_xv+(1+f_4)\partial_yv.\\
\end{array}
\right.
\end{align}
From the fourth equation in (\ref{3.7}), we have
\begin{align}
\label{3.8}
\partial_yv=\frac{1}{1+f_4}(\partial_tf_4+u\partial_xf_4+v\partial_yf_4-f_2\partial_xv).
\end{align}

\subsubsection*{Step 1}
Applying the operator $Z^\alpha\ (|\alpha|\leq m-1)$ on (\ref{3.8}), we get
\begin{align*}
Z^\alpha\partial_yv=Z^\alpha\left\{\frac{1}{1+f_4}(\partial_tf_4+u\partial_xf_4+v\partial_yf_4-f_2\partial_xv)\right\}.
\end{align*}
Notice that
\begin{align}
\label{A1}
&\left\|Z^\alpha\left(\frac{v}{1+f_4}\partial_yf_4\right)\right\|=\left\|Z^\alpha\left(\frac{1}{1+f_4}\frac{v}{\varphi(y)}\varphi(y)\partial_yf_4\right)\right\|\nonumber\\
\lesssim &\left\|\frac{1}{1+f_4}\frac{v}{\varphi(y)}\right\|_{L^\infty}\|f_4\|_m+\|\varphi(y)\partial_yf_4\|_{L^\infty}\left\|\frac{1}{1+f_4}\frac{v}{\varphi(y)}\right\|_{m-1}\nonumber\\
\lesssim &\|\partial_yv\|_{L^\infty}\|f_4\|_m+\|f_4\|_{1, \infty}(\|\partial_yv\|_{m-1}+\|\partial_yv\|_{L^\infty}P(\|f_4\|_{m-1}))\nonumber\\
\lesssim &(1+P(Q(t)))P(\|f_4\|_m)+\|f_4\|_{1, \infty}\|\partial_yv\|_{m-1},
\end{align}
and %where we used {\it a priori} estimate of $\|f_4\|_{L^\infty}\leq 1/2$. And
\begin{align*}
&\left\|Z^\alpha\left(\frac{\partial_tf_4}{1+f_4}\right)\right\|+\left\|Z^\alpha\left(\frac{u}{1+f_4}\partial_xf_4\right)\right\|+\left\|Z^\alpha\left(\frac{f_2}{1+f_4}\partial_xv\right)\right\|\\
\lesssim&\left\|\frac{1}{1+f_4}\right\|_{L^\infty}\|f_4\|_m+\|\partial_tf_4\|_{L^\infty}\left\|Z^\alpha\left(\frac{1}{1+f_4}\right)\right\|
+\left\|\frac{u}{1+f_4}\right\|_{L^\infty}\|f_4\|_m\\
&+\|\partial_xf_4\|_{L^\infty}\left\|Z^\alpha\left(\frac{u}{1+f_4}\right)\right\|
+\left\|\frac{f_2}{1+f_4}\right\|_{L^\infty}\|v\|_m+\|\partial_xv\|_{L^\infty}\left\|Z^\alpha\left(\frac{f_2}{1+f_4}\right)\right\|\\
\lesssim&(1+P(Q(t)))(P(\|f_4\|_m)+\|v\|_m+\|f_2\|_{m-1}+\|u\|_{m-1}),
\end{align*}
where we used the {\it a priori} assumption of $\|f_4\|_{L^\infty}\leq 1/2$.
Summing all of above inequalities over $|\alpha|\leq m-1$ and using the {\it a priori} assumption of $\|f_4\|_{1,\infty}\leq C_0\sigma_0$ with $\sigma_0$ being in Theorem \ref{Thm1} and $C_0$ being a suitably large constant independent of $\sigma_0$ and $\varepsilon$ to be determined later,
we obtain the following estimate by choosing $\sigma_0$ sufficiently small once $C_0$ is fixed, % then choosing $\sigma_0$ sufficiently small yields the following estimate.
\begin{align}
\label{3.9}
\|\partial_yv\|_{m-1}\lesssim (1+P(Q(t)))(P(\|f_4\|_m)+\|v\|_m+\|f_2\|_{m-1}+\|u\|_{m-1}).
\end{align}

\subsubsection*{Step 2}
To control $\|\nabla v\|_{1, \infty}$ in $Q(t)$, it is necessary to derive the conormal estimates of $\partial_y^2v$. Applying the operator $Z^\alpha\partial_y\ (|\alpha|\leq m-2)$ on the equation (\ref{3.8}) gives
\begin{align*}
Z^\alpha\partial^2_yv=Z^\alpha\partial_y\left\{\frac{1}{1+f_4}(\partial_tf_4+u\partial_xf_4+v\partial_yf_4-f_2\partial_xv)\right\}.
\end{align*}
Then
\begin{align*}
\|Z^\alpha\partial^2_yv\|\leq &\left\|Z^\alpha\partial_y\left(\frac{\partial_tf_4}{1+f_4}\right)\right\|+\left\|Z^\alpha\partial_y\left(\frac{u\partial_xf_4}{1+f_4}\right)\right\| \\
&+\left\|Z^\alpha\partial_y\left(\frac{v\partial_yf_4}{1+f_4}\right)\right\|+\left\|Z^\alpha\partial_y\left(\frac{f_2\partial_xv}{1+f_4}\right)\right\|.
\end{align*}
Now, we estimate each of the terms  on the right hand side as follows. Firstly,
\begin{align*}
&\left\|Z^\alpha\partial_y\left(\frac{\partial_tf_4}{1+f_4}\right)\right\|\leq \left\|Z^\alpha\left(\frac{1}{1+f_4}\partial_y\partial_tf_4\right)\right\|+\left\|Z^\alpha\left(\partial_tf_4\partial_y\left(\frac{1}{1+f_4}\right)\right)\right\|\\
\lesssim&\|\partial_yf_4\|_{1, \infty}P(\|f_4\|_{m-2})+\|\partial_yf_4\|_{m-1}\\
&+\|\partial_tf_4\|_{L^\infty}(\|\partial_yf_4\|_{m-2}+\|\partial_yf_4\|_{L^\infty}P(\|f_4\|_{m-2}))+\|\partial_yf_4\|_{L^\infty}\|f_4\|_{m-1}\\
\lesssim& (1+P(Q(t)))(P(\|f_4\|_{m-1})+\|\partial_yf_4\|_{m-1}),
\end{align*}
where the {\it a priori} estimate of $\|f_4\|_{L^\infty}\leq 1/2$ is used again; secondly,
\begin{align*}
&\left\|Z^\alpha\partial_y\left(\frac{u\partial_xf_4}{1+f_4}\right)\right\|\leq \left\|Z^\alpha\left(\frac{u}{1+f_4}\partial_y\partial_xf_4\right)\right\|+\left\|Z^\alpha\left(\partial_xf_4\partial_y\left(\frac{u}{1+f_4}\right)\right)\right\|\\
\lesssim&(1+P(Q(t)))\|\partial_yf_4\|_{1, \infty}(\|u\|_{m-2}+P(\|f_4\|_{m-2}))+\|u\|_{L^\infty}\|\partial_yf_4\|_{m-1}\\
&+(1+P(Q(t)))\|\partial_xf_4\|_{L^\infty}(\|\partial_y u\|_{m-2}+\|\partial_y  f_4\|_{m-2})+\|\partial_y(u, f_4)\|_{L^\infty}\|f_4\|_{m-1}\\
\lesssim& (1+P(Q(t)))(\|u\|_{m-2}+P(\|f_4\|_{m-1})+\|\partial_yf_4\|_{m-1}+\|\partial_yu\|_{m-2}).
\end{align*}
Similarly,
\begin{align*}
&\left\|Z^\alpha\partial_y\left(\frac{f_2\partial_xv}{1+f_4}\right)\right\|\leq \left\|Z^\alpha\left(\frac{f_2}{1+f_4}\partial_y\partial_xv\right)\right\|+\left\|Z^\alpha\left(\partial_xv\partial_y\left(\frac{f_2}{1+f_4}\right)\right)\right\|\\
\lesssim& (1+P(Q(t)))(P(\|f_4\|_{m-2})+\|f_2\|_{m-2}+\|v\|_{m-1}+\|\partial_yf_4\|_{m-2}+\|\partial_yf_2\|_{m-2}+\|\partial_yv\|_{m-1}).
\end{align*}
Moreover,
\begin{align*}
&\left\|Z^\alpha\partial_y\left(\frac{v\partial_yf_4}{1+f_4}\right)\right\|\leq \left\|Z^\alpha\left(\frac{v}{1+f_4}\partial_y\partial_yf_4\right)\right\|+\left\|Z^\alpha\left(\partial_yf_4\partial_y\left(\frac{v}{1+f_4}\right)\right)\right\|\\
=&\left\|Z^\alpha\left(\frac{1}{(1+f_4)}\frac{v}{\varphi(y)}\varphi(y)\partial_y\partial_yf_4\right)\right\|+\left\|Z^\alpha\left(\partial_yf_4\partial_y\left(\frac{v}{1+f_4}\right)\right)\right\|\\
\lesssim&(1+P(Q(t)))(\|\partial_yv\|_{m-2}+P(\|f_4\|_{m-2})+\|\partial_yf_4\|_{m-1})\\
&+(1+P(Q(t)))(\|\partial_y v\|_{m-2}+\|\partial_y f_4\|_{m-2}+\|v\|_{m-2}+P(\|f_4\|_{m-2}))\\
\lesssim& (1+P(Q(t)))(P(\|f_4\|_{m-2})+\|v\|_{m-2}+\|\partial_yf_4\|_{m-1}+\|\partial_yv\|_{m-2}).
\end{align*}
Consequently, summing all of above inequalities over $|\alpha|\leq m-2$ yields that
\begin{align}
\label{3.10}
\|\partial^2_yv\|_{m-2}\lesssim&(1+P(Q(t)))(P(\|f_4\|_{m-1})+\|f_2\|_{m-2}+\|u\|_{m-2}+\|v\|_{m-1})\nonumber\\
&+(1+P(Q(t)))(\|\partial_yf_4\|_{m-1}+\|\partial_yf_2\|_{m-2}+\|\partial_yv\|_{m-1}+\|\partial_yu\|_{m-2})\nonumber\\
\lesssim&(1+P(Q(t)))(P(\|f_4\|_{m})+\|f_2\|_{m-1}+\|u\|_{m-1}+\|v\|_{m})\nonumber\\
&+(1+P(Q(t)))(\|\partial_yf_4\|_{m-1}+\|\partial_yf_2\|_{m-2}+\|\partial_yu\|_{m-2}),
\end{align}
where (\ref{3.9}) is used in the second inequality.
\subsection{Estimates of $\partial_y u$ and $\partial^2_y u$}
From the second equation in (\ref{3.7}), we have
\begin{align}
\label{3.11}
\partial_yu=\frac{1}{1+f_4}\{\partial_tf_2+u\partial_xf_2+v\partial_yf_2-f_2\partial_xu\}.
\end{align}
A similar argument to (\ref{3.9}) yields that
\begin{align*}
\|\partial_yu\|_{m-1}\lesssim (1+P(Q(t)))(\|(u, f_2)\|_m+P(\| f_4\|_{m-1})+\|\partial_yv\|_{m-1}).
\end{align*}
Then, by using (\ref{3.9}), we have
\begin{align}
\label{3.12}
\|\partial_yu\|_{m-1}\lesssim (1+P(Q(t)))(\|(u, v, f_2)\|_m+P(\|f_4\|_m)).
\end{align}
Applying the operator $Z^\alpha\partial_y\ (|\alpha|\leq m-2)$ on (\ref{3.11}) gives
\begin{align}
\label{3.13}
Z^\alpha\partial^2_yu=Z^\alpha\partial_y\left\{\frac{1}{1+f_4}(\partial_tf_2+u\partial_xf_2+v\partial_yf_2-f_2\partial_xu)\right\}.
\end{align}
Similar arguments to  (\ref{3.10}) give
\begin{align}
\label{3.14}
&\|\partial^2_yu\|_{m-2}\nonumber\\
\lesssim& (1+P(Q(t)))(\|v\|_{m-2}+P(\|f_4\|_{m-2})+\|(u, f_2)\|_{m-1}+\|\partial_y(v, f_4)\|_{m-2}+\|\partial_y(u, f_2)\|_{m-1})\nonumber\\
\lesssim& (1+P(Q(t)))(\|(u, v, f_2)\|_{m}+P(\|f_4\|_{m})+\|\partial_yf_2\|_{m-1}+\|\partial_yf_4\|_{m-2}),
\end{align}
where in the second inequality both (\ref{3.9}) and (\ref{3.12}) are used.

\subsection{Estimate of $\partial_y f_2$}
It is convenient to rewrite the momentum equations in (\ref{3.1}) as  the following:
%\begin{align}
%\label{3.15}
%\left\{
%\begin{array}{ll}
%(1+\rho)\partial_t u+(1+\rho) u\partial_xu+(1+\rho) v\partial_yu-(1+\rho)(1+f_1)\partial_xf_1-(1+\rho) f_3\partial_yf_1\\
%\quad-(1+\rho) f_2\partial_xf_2 -(1+\rho) (1+f_4)\partial_yf_2-\mu\varepsilon\partial_y^2u-\mu\varepsilon\partial_x^2u-(\mu+\lambda)\varepsilon\partial_x(u_x+v_y)+\partial_xp=0,\\
%(1+\rho)\partial_t v+(1+\rho) u\partial_xv+(1+\rho) v\partial_yv-(1+\rho)(1+f_1)\partial_xf_3-(1+\rho) f_3\partial_yf_3\\
%\quad-(1+\rho) f_2\partial_xf_4-(1+\rho) (1+f_4)\partial_yf_4-\mu\varepsilon\partial_y^2v-\mu\varepsilon\partial_x^2v-(\mu+\lambda)\varepsilon\partial_y(u_x+v_y)+\partial_yp=0.
%\end{array}
%\right.
%\end{align}
\begin{equation}\label{3.15}
\begin{cases}
(1+\rho)\partial_t u+(1+\rho) u\partial_xu+(1+\rho) v\partial_yu-(1+\rho)(1+f_1)\partial_xf_1-(1+\rho) f_3\partial_yf_1\\
\quad-(1+\rho) f_2\partial_xf_2 -(1+\rho) (1+f_4)\partial_yf_2-\mu\varepsilon\partial_y^2u-\mu\varepsilon\partial_x^2u\\
\quad -(\mu+\lambda)\varepsilon\partial_x(u_x+v_y)+\partial_xp=0,\\
(1+\rho)\partial_t v+(1+\rho) u\partial_xv+(1+\rho) v\partial_yv-(1+\rho)(1+f_1)\partial_xf_3-(1+\rho) f_3\partial_yf_3\\
\quad-(1+\rho) f_2\partial_xf_4-(1+\rho) (1+f_4)\partial_yf_4-\mu\varepsilon\partial_y^2v-\mu\varepsilon\partial_x^2v\\
\quad -(\mu+\lambda)\varepsilon\partial_y(u_x+v_y)+\partial_yp=0.
\end{cases}
\end{equation}

\subsubsection*{Step 1}
According to the first equation in (\ref{3.15}), we have
\begin{align}
\label{3.16}
&(1+\rho)(1+f_4)\partial_yf_2+\mu\varepsilon\partial^2_yu \nonumber \\
=&(1+\rho) \partial_tu+(1+\rho) u\partial_xu+(1+\rho) v\partial_yu-(1+\rho)(1+f_1)\partial_xf_1-(1+\rho)f_3\partial_yf_1 \\
&-(1+\rho) f_2\partial_xf_2-\mu\varepsilon\partial_x^2u-(\mu+\lambda)\varepsilon\partial_x(u_x+v_y)+\partial_xp. \nonumber
\end{align}
Applying the operator $Z^\alpha\ (|\alpha|\leq m-1)$ on the both sides of (\ref{3.16}), one has,
\begin{align}
\label{3.17}
&(1+\rho)(1+f_4)Z^\alpha\partial_yf_2+\varepsilon\mu Z^\alpha\partial^2_yu \nonumber\\
=&Z^\alpha\{(1+\rho)\partial_t  u+(1+\rho) u\partial_xu+(1+\rho) v\partial_yu\}\nonumber\\
&+Z^\alpha\{-(1+\rho)(1+f_1)\partial_xf_1-(1+\rho) f_3\partial_yf_1-(1+\rho) f_2\partial_xf_2\}\nonumber\\
&+Z^\alpha\{-\mu\varepsilon\partial_x^2u-(\mu+\lambda)\varepsilon\partial_x(u_x+v_y)+\partial_xp\}-[Z^\alpha, (1+\rho)(1+f_4)]\partial_yf_2.
\end{align}
Taking the $L^2$ inner product over $\mathbb{R}^2_+$ on the both sides of the above equality yields that
\begin{align}
\label{A3}
&\|(1+\rho)(1+f_4)Z^\alpha\partial_yf_2\|^2+\mu^2\varepsilon^2\|Z^\alpha\partial_y^2 u\|^2+2\mu\varepsilon \int(1+\rho)(1+f_4) Z^\alpha\partial_yf_2\cdot Z^\alpha\partial_y^2 u d{\bf x}\nonumber\\
\lesssim&\|(1+\rho) \partial_tu\|^2_{m-1}+\|(1+\rho)u\partial_x u\|^2_{m-1}+\|(1+\rho) v \partial_y u\|^2_{m-1}\nonumber\\
&+\|(1+\rho)(1+f_1)\partial_xf_1\|^2_{m-1}+\|(1+\rho) f_3\partial_yf_1\|^2_{m-1}+\|(1+\rho) f_2\partial_xf_2\|^2_{m-1}\nonumber \\
&+\varepsilon^2\|\partial_x u\|^2_{m}+\varepsilon^2\|\partial_yv\|^2_{m}+\|\partial_xp\|^2_{m-1}\\
&+\|Z((1+\rho)(1+f_4))\|^2_{L^\infty}\|\partial_yf_2\|^2_{m-2}+\|\partial_yf_2\|^2_{L^\infty}\|Z((1+\rho)(1+f_4))\|^2_{m-2} \nonumber \\
\lesssim& (1+P(Q(t)))\left(\|( u, f_1, f_2)\|_{m}^2+\|(\rho, v, f_3, f_4)\|_{m-1}^2+\|\partial_y(u, f_1)\|_{m-1}^2+\|\partial_yf_2\|_{m-2}^2\right)\nonumber\\
&+\|\partial_xp\|_{m-1}^2+\varepsilon^2\|(\partial_x u, \partial_yv)\|_m^2\nonumber\\
\lesssim& (1+P(Q(t)))\left(\|( u, v, f_1, f_2)\|_{m}^2+\|(\rho, f_3)\|_{m-1}^2+P(\|f_4\|_m)+\|\partial_y f_1\|_{m-1}^2+\|\partial_yf_2\|_{m-2}^2\right)\nonumber\\
&+\|\partial_xp\|_{m-1}^2+\varepsilon^2\|(\partial_x u, \partial_yv)\|_m^2, \nonumber
\end{align}
where (\ref{3.12}) is used in the last inequality.

\subsubsection*{Step 2}
It remains to handle the mixed term  $2\mu\varepsilon \int(1+\rho)(1+f_4) Z^\alpha\partial_yf_2\cdot Z^\alpha\partial_y^2 u d{\bf x}$ on the left hand side  of (\ref{A3}).
From the second equation in (\ref{3.7}), we have
\begin{align}
\label{3.19}
\frac{1}{(1+f_4)}\partial_tf_2-\partial_yu=\frac{1}{(1+f_4)}(f_2\partial_xu-u\partial_xf_2-v\partial_yf_2).
\end{align}
Applying the operator $Z^\alpha\partial_y\ (|\alpha|\leq m-1)$ on the equation (\ref{3.19}) leads to
\begin{align}
\label{3.20}
&\frac{1}{(1+f_4)}\partial_tZ^\alpha \partial_yf_2-Z^\alpha \partial^2_yu\nonumber\\
=&Z^\alpha \partial_y\left\{\frac{f_2}{1+f_4}\partial_xu\right\}-\frac{u}{(1+f_4)}\partial_xZ^\alpha \partial_y f_2-\frac{v}{(1+f_4)}\partial_yZ^\alpha \partial_yf_2+\mathcal{C}_6^\alpha,
\end{align}
where
\begin{align} \label{C6}
\mathcal{C}_6^\alpha=-\left[Z^\alpha \partial_y, \frac{1}{(1+f_4)}\partial_t\right]f_2-\left[Z^\alpha \partial_y, \frac{u}{(1+f_4)}\partial_x\right]f_2-\left[Z^\alpha \partial_y, \frac{v}{(1+f_4)}\partial_y\right]f_2.
\end{align}
Multiplying  (\ref{3.20}) by $2\mu\varepsilon (1+\rho)(1+f_4) Z^\alpha\partial_yf_2$ and integrating the resulting equation over $\mathbb{R}^2_+$ give that
\begin{align} \label{418}
&\mu\varepsilon\frac{d}{dt}\|\sqrt{(1+\rho)}Z^\alpha\partial_y f_2\|^2-2\mu\varepsilon\int (1+\rho)(1+f_4)Z^\alpha\partial_yf_2 Z^\alpha\partial_y^2ud{\bf x}\\
=&2\mu\varepsilon\int (1+\rho)(1+f_4) Z^\alpha\partial_yf_2 Z^\alpha\partial_y\left(\frac{f_2}{1+f_4}\partial_xu\right)d{\bf x}+2\mu\varepsilon\int (1+\rho)(1+f_4) Z^\alpha\partial_yf_2 \mathcal{C}_6^\alpha d{\bf x},\nonumber
\end{align}
where the equation of $\partial_t\rho+\partial_x((1+\rho)u)+\partial_y((1+\rho) v)=0$ is used.
For the terms on the right hand side of \eqref{418}, by the Cauchy-Schwarz inequality, we have
\begin{align*}
&\left|2\mu\varepsilon\int (1+\rho)(1+f_4) Z^\alpha\partial_yf_2 Z^\alpha\partial_y\left(\frac{f_2}{1+f_4}\partial_xu\right)d{\bf x}\right|\\
&+\left|2\mu\varepsilon\int (1+\rho)(1+f_4) Z^\alpha\partial_yf_2 \mathcal{C}_6^\alpha d{\bf x}\right|\\
=&\left|2\mu\varepsilon\int (1+\rho)(1+f_4)  Z^\alpha\partial_yf_2 Z^\alpha\left(\partial_y\left(\frac{f_2}{1+f_4}\right)\partial_xu+\frac{f_2}{1+f_4}\partial_x\partial_yu\right)d{\bf x}\right|\\
&+\left|2\mu\varepsilon\int (1+\rho)(1+f_4)  Z^\alpha\partial_yf_2 \mathcal{C}_6^\alpha d{\bf x}\right|\\
\leq & \delta\|(1+\rho)(1+f_4)Z^\alpha\partial_yf_2\|^2\\
&+C_\delta\varepsilon^2\times\left(\left\|Z^\alpha\left(\partial_y\left(\frac{f_2}{1+f_4}\right)\partial_xu+\frac{f_2}{1+f_4}\partial_x\partial_yu\right)\right\|^2
+\|\mathcal{C}_6^\alpha\|^2\right),
\end{align*}
for some small constant $\delta>0$ to be determined later.
 Note that
\begin{align*}
&\left\|Z^\alpha\left(\partial_y\left(\frac{f_2}{1+f_4}\right)\partial_xu+\frac{f_2}{1+f_4}\partial_x\partial_yu\right)\right\|^2\\
\lesssim &(1+P(Q(t)))\|u\|_m^2+\|\partial_xu\|_{L^\infty}^2\left\|\partial_y\left(\frac{f_2}{1+f_4} \right)\right\|_{m-1}^2\\
&+(1+P(Q(t)))\|\partial_yu\|_m^2+\|\partial_yu\|_{1, \infty}^2\left\|\frac{f_2}{1+f_4}\right\|^2_{m-1}\\
\lesssim &(1+P(Q(t)))(\|u\|_m^2+\|\partial_yu\|_m^2+\|\partial_y(f_2, f_4)\|_{m-1}^2+\|f_2\|_{m-1}^2+P(\|f_4\|_{m-1})).
\end{align*}

\subsubsection*{Step 3}
For the second term on the right hand side of \eqref{418}, we need to  estimate the commutator $\mathcal{C}_6^\alpha$ defined in \eqref{C6}. First, we have
\begin{align*}
&\left\|\left[Z^\alpha \partial_y, \frac{1}{(1+f_4)}\partial_t\right]f_2\right\|^2\\
=&\left\|Z^\alpha\left(\partial_y\left(\frac{1}{1+f_4}\right)\partial_tf_2\right)+\left[Z^\alpha, \frac{1}{1+f_4}\right]\partial_t\partial_y f_2\right\|^2\\
\lesssim &(1+P(Q(t)))\|f_2\|_m^2+\|\partial_tf_2\|_{L^\infty}^2\left\|\partial_y\left(\frac{1}{1+f_4}\right)\right\|_{m-1}^2\\
&+(1+P(Q(t)))\|\partial_yf_2\|_{m-1}^2+\|\partial_yf_2\|_{1, \infty}^2\left\|Z\left(\frac{1}{1+f_4}\right)\right\|_{m-2}^2\\
\lesssim &(1+P(Q(t)))(\|f_2\|_m^2+\|\partial_y(f_2, f_4)\|_{m-1}^2+P(\| f_4\|_{m-1})),
\end{align*}
and similarly,
\begin{align*}
&\left\|\left[Z^\alpha \partial_y, \frac{u}{(1+f_4)}\partial_x\right]f_2\right\|^2\\
=&\left\|Z^\alpha\left(\partial_y\left(\frac{u}{1+f_4}\right)\partial_xf_2\right)+\left[Z^\alpha, \frac{u}{(1+f_4)}\right]\partial_x\partial_y f_2\right\|^2\\
\lesssim &(1+P(Q(t)))\|f_2\|_m^2+\|\partial_xf_2\|_{L^\infty}^2\left\|\partial_y\left(\frac{u}{1+f_4}\right)\right\|_{m-1}^2\\
&+(1+P(Q(t)))\|\partial_yf_2\|_{m-1}^2+\|\partial_yf_2\|_{1, \infty}^2\left\|Z\left( \frac{u}{1+f_4}\right)\right\|_{m-2}^2\\
\lesssim &(1+P(Q(t)))(\|f_2\|_m^2+\|u\|_{m-1}^2+P(\|f_4\|_{m-1})+\|\partial_y(u, f_2, f_4)\|_{m-1}^2).
\end{align*}
Next we  notice that
\begin{align}
\label{TT}
&\left[Z^\alpha \partial_y, \frac{v}{(1+f_4)}\partial_y\right]f_2\nonumber\\
=&Z^\alpha \left\{\partial_y\left(\frac{v}{1+f_4}\right)\partial_yf_2\right\}+\left[Z^\alpha, \frac{v}{(1+f_4)}\right]\partial_y\partial_yf_2+\frac{v}{1+f_4}[Z^\alpha, \partial_y]\partial_yf_2
\end{align}
with
\begin{align*}
\left[Z^\alpha, \frac{v}{(1+f_4)}\right]\partial_y\partial_yf_2=\sum\limits_{|\beta|\geq 1, \beta+\kappa=\alpha}C_{\alpha\beta}Z^\beta\left(\frac{v}{1+f_4}\right)Z^\kappa\partial_y\partial_yf_2.
\end{align*}
The first term in (\ref{TT}) can be estimated as the following:
\begin{align*}
&\left\|Z^\alpha \left\{\partial_y\left(\frac{v}{1+f_4}\right)\partial_yf_2\right\}\right\|^2\\
\lesssim& (1+P(Q(t)))(\|\partial_yv\|^2_{m-1}+\|\partial_yf_2\|^2_{m-1}+\|\partial_yf_4\|^2_{m-1}+P(\|f_4\|_{m-1})+\|v\|^2_{m-1}).
\end{align*}

\subsubsection*{Step 4}
We now estimate the second and the third terms, that is,  the  two commutators in (\ref{TT}). For the first commutator, we have the following computation,
\begin{align*}
&Z^\beta\left(\frac{v}{1+f_4}\right)Z^\kappa\partial_y\partial_yf_2\\
=&\left(Z^\beta\left(\frac{v}{(1+f_4)\varphi(y)}\right)+\left[Z^\beta, \frac{1}{\varphi(y)}\right]\frac{v}{1+f_4}\right)\left(Z^\kappa\varphi(y)\partial_y\partial_yf_2+[Z^\kappa, \varphi(y)]\partial_y\partial_yf_2\right),
\end{align*}
where
\begin{align*}
\left[Z^\beta, \frac{1}{\varphi(y)}\right]\frac{v}{1+f_4}=\sum\limits_{\eta=0}^{\beta-1}\psi_{\eta,\beta}(y)Z^\eta\left(\frac{v}{\varphi(y)(1+f_4)}\right)
\end{align*}
for  some  bounded smooth functions $\psi_{\eta,\beta}(y)$ due to Lemma \ref{L6}, and similarly,
\begin{align*}
[Z^\kappa, \varphi(y)]\partial_y\partial_yf_2=\sum\limits_{\theta=0}^{\kappa-1}\psi_{\theta,\kappa}(y)Z^\theta(\varphi(y)\partial_y\partial_yf_2)
\end{align*}
for some  bounded smooth functions  $\psi_{\theta,\kappa}(y)$.
Then, according to Lemma \ref{L1}, we have  the following estimate:
\begin{align*}
&\left\| \left[Z^\alpha, \frac{v}{(1+f_4)}\right]\partial_y\partial_yf_2\right\|^2\\
\lesssim& \sum\limits_{|\beta|\geq 1, \beta+\kappa=\alpha}\left\|Z^\beta\left(\frac{v}{1+f_4}\right)Z^\kappa\partial_y\partial_yf_2\right\|^2\\
\lesssim&(1+P(Q(t)))(\|\partial_yv\|^2_{m-1}+\|\partial_yf_2\|^2_{m-1}+P(\|f_4\|_{m-1})).
\end{align*}
%For the third term in (\ref{TT}), since
For the second commutator in (\ref{TT}), we write
\begin{align*}
[Z^\alpha, \partial_y]\partial_yf_2=\sum\limits_{\theta=0}^{m-2}\phi_{\theta,\alpha}(y)\partial_yZ^\theta\partial_yf_2
\end{align*}
with $\phi_{\theta,\alpha}(y)$ being bounded smooth functions due to Lemma \ref{L3}.
Then,
\begin{align*}
&\left\|\frac{v}{1+f_4}[Z^\alpha, \partial_y]\partial_yf_2\right\|^2\\
=&\left\|\frac{v}{(1+f_4)\varphi(y)}\varphi(y)[Z^\alpha, \partial_y]\partial_yf_2\right\|^2\\
\lesssim&(1+P(Q(t)))\|\partial_yf_2\|^2_{m-1}.
\end{align*}
Consequently,
\begin{align*}
&\left\|\left[Z^\alpha \partial_y, \frac{v}{(1+f_4)}\partial_y\right]f_2\right\|^2\\
\lesssim &(1+P(Q(t)))(\|\partial_yv\|^2_{m-1}+\|\partial_yf_2\|^2_{m-1}+\|\partial_yf_4\|^2_{m-1}+P(\|f_4\|_{m-1})+\|v\|^2_{m-1}).
\end{align*}
Substituting all of the above estimates into \eqref{418} yields the following estimate:
\begin{align}
\label{3.22}
&\mu\varepsilon\frac{d}{dt}\|\sqrt{(1+\rho)}Z^\alpha\partial_y f_2\|^2-2\mu\varepsilon\int (1+\rho)(1+f_4)Z^\alpha\partial_yf_2 Z^\alpha\partial_y^2ud{\bf x}\nonumber\\
\leq&\delta\|(1+\rho)(1+f_4)Z^\alpha\partial_yf_2\|^2\\
&+C_\delta\varepsilon^2(1+P(Q(t)))\Big(\|u\|_m^2+\|f_2\|_m^2+P(\|f_4\|_{m-1})+\|v\|_{m-1}^2 \nonumber \\
&\qquad\qquad\qquad\qquad\qquad +\|\partial_yu\|_m^2+\|\partial_yv\|_{m-1}^2+\|\partial_yf_2\|_{m-1}^2+\|\partial_yf_4\|_{m-1}^2\Big).\nonumber
\end{align}

\subsubsection*{Step 5}
Combining (\ref{A3}) and (\ref{3.22}) together and choosing $\delta$ suitably small, we obtain
\begin{align}
\label{A4}
&\mu\varepsilon\frac{d}{dt}\|\sqrt{(1+\rho)}Z^\alpha\partial_y f_2\|^2+\|(1+\rho)(1+f_4)Z^\alpha\partial_yf_2\|^2+\mu^2\varepsilon^2\|Z^\alpha\partial_y^2 u\|^2\nonumber\\
\lesssim& (1+P(Q(t)))\left(\|( u, v, f_1, f_2)\|_{m}^2+\|(\rho, f_3)\|_{m-1}^2+\|\partial_y f_1\|_{m-1}^2+\|\partial_yf_2\|_{m-2}^2+P(\|f_4\|_m)\right)\nonumber\\
&+\|\partial_xp\|_{m-1}^2+\varepsilon^2\|(\partial_x u, \partial_yv)\|_m^2\\
&+C\varepsilon^2(1+P(Q(t)))\left(\|\partial_yu\|_m^2+\|\partial_yv\|_{m-1}^2+\|\partial_yf_2\|_{m-1}^2+\|\partial_yf_4\|_{m-1}^2\right).\nonumber
\end{align}
Choosing $\varepsilon_0$ to be sufficiently small and for  $0<\varepsilon<\varepsilon_0$ summing the above inequalities over $|\alpha|\leq m-1$  lead  to
\begin{align}
\label{A5}
&\mu\varepsilon\frac{d}{dt}\|\partial_y f_2\|_{m-1}^2+\|\partial_yf_2\|_{m-1}^2+\varepsilon^2\|\partial_y^2 u\|_{m-1}^2\nonumber\\
\lesssim& (1+P(Q(t)))\left(\|( u, v, f_1, f_2)\|_{m}^2+\|(\rho, f_3)\|_{m-1}^2+\|\partial_yf_1\|_{m-1}^2+P(\|f_4\|_m)\right)\nonumber\\
&+\|\partial_xp\|_{m-1}^2+\varepsilon^2\|(\partial_x u, \partial_yv)\|_m^2\\
&+C\varepsilon^2(1+P(Q(t)))\left(\|\partial_yu\|_m^2+\|\partial_yf_4\|_{m-1}^2\right), \nonumber
\end{align}
where the mathematical induction arguments and the following {\it a priori} estimates are used:
\begin{align*}
\|\rho\|_{L^\infty}\leq 1/2,\qquad \|f_4\|_{L^\infty}\leq 1/2,\quad Q(t)\leq C;
\end{align*}
more precisely, notice that the order of conormal derivatives is up to $m-1$ on the left hand side  of (\ref{A4}), and there exist terms of $\|\partial_yf_2\|_{m-2}^2$ and $\varepsilon^2\|\partial_yf_2\|_{m-1}^2$ on the right hand side of (\ref{A4}), then the first term is absorbed by using the mathematical induction arguments, and the second term is absorbed by choosing $\varepsilon$ sufficiently small and the {\it a priori} assumption of $Q(t)\leq C$. And (\ref{3.9}) is also used in deriving (\ref{A5}).

\subsection{Estimate of $\partial_y^2f_2$}
Next, we will derive the conormal energy estimates of $\partial_y^2f_2$.

%\noindent
%{\it Step 1.} \quad

\subsubsection*{Step 1}
Applying the operator $Z^\alpha\partial_y\ (|\alpha|\leq m-2)$ on the both sides of (\ref{3.16}) yields
\begin{align}
\label{3.23}
&(1+\rho)(1+f_4)Z^\alpha\partial^2_yf_2+\varepsilon\mu Z^\alpha\partial^3_yu\nonumber \\
&\quad=Z^\alpha\partial_y\{(1+\rho)\partial_t u+(1+\rho) u\partial_xu+(1+\rho) v\partial_yu\}\nonumber\\
&\qquad+Z^\alpha\partial_y\{-(1+\rho)(1+f_1)\partial_xf_1-(1+\rho) f_3\partial_yf_1-(1+\rho) f_2\partial_xf_2\}\\
&\qquad+Z^\alpha\partial_y\{-\mu\varepsilon\partial_x^2u-(\mu+\lambda)\varepsilon\partial_x(u_x+v_y)+\partial_xp\}-[Z^\alpha\partial_y, (1+\rho)(1+f_4)]\partial_yf_2. \nonumber
\end{align}
Taking the $L^2$ inner product on the both sides of the above equality, we obtain
\begin{align}
\label{3.24}
&\|(1+\rho)(1+f_4)Z^\alpha\partial^2_yf_2\|^2+\mu^2\varepsilon^2\|Z^\alpha\partial_y^3 u\|^2+2\mu\varepsilon \int(1+\rho)(1+f_4) Z^\alpha\partial^2_yf_2\cdot Z^\alpha\partial_y^3 u d{\bf x}\nonumber\\
\lesssim&\|Z^\alpha\partial_y ((1+\rho)\partial_t u)\|^2+\|Z^\alpha \partial_y((1+\rho) u\partial_x u)\|^2+\|Z^\alpha \partial_y((1+\rho) v \partial_y u)\|^2 \nonumber \\
&+\|Z^\alpha \partial_y((1+\rho)(1+f_1)\partial_xf_1)\|^2+\|Z^\alpha \partial_y((1+\rho) f_3\partial_yf_1)\|^2 \\
&+\|Z^\alpha \partial_y((1+\rho) f_2\partial_xf_2)\|^2 +\varepsilon^2\|Z^\alpha \partial_y\partial_x^2 u\|^2+\varepsilon^2\|Z^\alpha \partial_y\partial^2_{xy}v\|^2\nonumber\\
& +\|Z^\alpha \partial_y\partial_xp\|^2+\|[Z^\alpha\partial_y,  (1+\rho)(1+f_4)]\partial_yf_2\|^2\nonumber.
\end{align}
Here we only need to deal several typical terms in \eqref{3.24} since other terms can be handled similarly. First, we have
\begin{align*}
&\|Z^\alpha\partial_y ((1+\rho)\partial_t u)\|^2\\
\leq& \|Z^\alpha (\partial_y\rho\partial_t u)\|^2+\|Z^\alpha((1+\rho)\partial_t \partial_y u)\|^2\\
\leq& \|\partial_y\rho\|^2_{L^\infty}\|u\|_{m-1}^2+\|\partial_t u\|_{L^\infty}^2\|\partial_y \rho\|_{m-2}^2+(1+\|\rho\|_{L^\infty}^2)\|\partial_yu\|_{m-1}^2+\|\partial_yu\|_{1,\infty}^2\|\rho\|_{m-2}^2\\
\lesssim &(1+P(Q(t)))(\|u\|_{m-1}^2+\|\rho\|_{m-2}^2+\|\partial_y \rho\|_{m-2}^2+\|\partial_yu\|_{m-1}^2)\\
\lesssim &(1+P(Q(t)))(\|u, v, f_2\|_{m}^2+\|\rho\|_{m-2}^2+P(\|f_4\|_m)+\|\partial_y \rho\|_{m-2}^2),
\end{align*}
where the estimate (\ref{3.12}) is used in the last inequality.
By the same argument, we get
\begin{align*}
&\|Z^\alpha \partial_y((1+\rho) u\partial_x u)\|^2+\|Z^\alpha \partial_y((1+\rho)(1+f_1)\partial_xf_1)\|^2+\|Z^\alpha \partial_y((1+\rho) f_2\partial_xf_2)\|^2\\
\lesssim&(1+P(Q(t)))(\|(u, f_1, f_2)\|_{m-1}^2+\|\rho\|_{m-2}^2+\|\partial_y \rho\|_{m-2}^2+\|\partial_y (u, f_1, f_2)\|_{m-1}^2)\\
\lesssim&(1+P(Q(t)))(\|(u, v, f_2)\|_{m}^2+\|f_1\|^2_{m-1}+\|\rho\|_{m-2}^2+P(\|f_4\|_m)+\|\partial_y \rho\|_{m-2}^2+\|\partial_y (f_1, f_2)\|_{m-1}^2).
\end{align*}
Next,
\begin{align*}
Z^\alpha \partial_y((1+\rho) v \partial_y u)=Z^\alpha(\partial_y\rho v \partial_y u)+Z^\alpha ((1+\rho) \partial_y v \partial_y u)+Z^\alpha((1+\rho) v \partial_y\partial_y u),
\end{align*}
where
\begin{align*}
&\|Z^\alpha(\partial_y\rho v \partial_y u)\|^2+\|Z^\alpha ((1+\rho) \partial_y v \partial_y u)\|^2\\
\lesssim &(1+P(Q(t)))(\|(\rho, v)\|_{m-2}^2+\|\partial_y (\rho, u, v)\|_{m-2}^2\\
\lesssim& (1+P(Q(t)))(\|(u, v, f_2)\|_m^2+\|\rho\|_{m-2}^2+P(\|f_4\|_m)+\|\partial_y \rho\|_{m-2}^2,
\end{align*}
and
\begin{align*}
&\|Z^\alpha ((1+\rho) v \partial^2_y u)\|^2
=\left\|Z^\alpha \left((1+\rho) \frac{v}{\varphi(y)} \varphi(y)\partial_y\partial_y u\right)\right\|^2\\
\lesssim&(1+P(Q(t)))(\|\rho\|_{m-2}^2+\|\partial_y v\|_{m-2}^2+\|\partial_yu\|_{m-1}^2)\\
\lesssim&(1+P(Q(t)))(\|\rho\|_{m-2}^2+\|(u, v, f_2)\|_{m}^2+P(\|f_4\|_{m})).
\end{align*}
Similarly,
\begin{align*}
Z^\alpha \partial_y((1+\rho) f_3\partial_yf_1)=Z^\alpha(\partial_y\rho f_3\partial_yf_1)+Z^\alpha ((1+\rho) \partial_y f_3\partial_yf_1)+Z^\alpha((1+\rho) f_3\partial_y\partial_yf_1),
\end{align*}
where
\begin{align*}
&\|Z^\alpha(\partial_y\rho f_3\partial_yf_1)\|^2+\|Z^\alpha ((1+\rho) \partial_y f_3\partial_yf_1)\|^2\\
\lesssim&(1+P(Q(t)))(\|\rho\|_{m-2}^2+\|f_3\|_{m-2}^2+\|\partial_y\rho\|_{m-2}^2+\|\partial_yf_1\|_{m-2}^2+\|\partial_y f_3\|_{m-2}^2),
\end{align*}
and
\begin{align*}
&\|Z^\alpha((1+\rho) f_3\partial_y\partial_yf_1)\|^2
=\left\|Z^\alpha \left((1+\rho) \frac{f_3}{\varphi(y)} \varphi(y)\partial_y\partial_y f_1\right)\right\|^2\\
\lesssim&(1+P(Q(t)))(\|\rho\|_{m-2}^2+\|\partial_y f_3\|_{m-2}^2+\|\partial_yf_1\|_{m-1}^2).
\end{align*}
Now we deal with the last term of commutator in \eqref{3.24}.
Note that
\begin{align*}
[Z^\alpha\partial_y, (1+\rho)(1+f_4)]\partial_yf_2=Z^\alpha(\partial_y((1+\rho)(1+f_4))\partial_yf_2)+[Z^\alpha, (1+\rho)(1+f_4)]\partial^2_yf_2,
\end{align*}
where
\begin{align*}
\|Z^\alpha(\partial_y((1+\rho)(1+f_4))\partial_yf_2)\|^2\lesssim (1+P(Q(t)))(\|(\rho, f_4)\|_{m-2}^2+\|(\partial_y\rho, \partial_yf_2, \partial_yf_4)\|_{m-2}^2),
\end{align*}
and
\begin{align*}
&\|[Z^\alpha, (1+\rho)(1+f_4)]\partial^2_yf_2\|^2\\
\lesssim& \sum\limits_{|\beta|\geq 1, \beta+\kappa=\alpha}\|Z^\beta((1+\rho)(1+f_4))Z^\kappa(\partial^2_yf_2)\|^2\\
=&\sum\limits_{1\leq|\beta|\leq |\alpha|/2, \beta+\kappa=\alpha}\|Z^\beta((1+\rho)(1+f_4))Z^\kappa(\partial^2_yf_2)\|^2\\
&+\sum\limits_{|\beta|>|\alpha|/2, \beta+\kappa=\alpha}\|Z^\beta((1+\rho)(1+f_4))Z^\kappa(\partial^2_yf_2)\|^2\\
\lesssim&\sum\limits_{1\leq|\beta|\leq |\alpha|/2, \beta+\kappa=\alpha}\|Z^\beta((1+\rho)(1+f_4))\|^2_{L_{x,y}^\infty}\|Z^\kappa(\partial^2_yf_2)\|^2_{L^2_{{\bf x}}}\\
&+\sum\limits_{|\beta|>|\alpha|/2, \beta+\kappa=\alpha}\|Z^\beta((1+\rho)(1+f_4))\|^2_{L^2_x(L^\infty_y)}\|Z^\kappa(\partial^2_yf_2)\|^2_{L^\infty_{x}(L^2_y)}\\
\lesssim&\sum\limits_{1\leq|\beta|\leq |\alpha|/2, \beta+\kappa=\alpha}\|Z^\beta((1+\rho)(1+f_4))\|_{L_x^\infty(L^2_y)}\|\partial_yZ^\beta((1+\rho)(1+f_4))\|_{L_x^\infty(L^2_y)}\|Z^\kappa(\partial^2_yf_2)\|^2_{L^2_{{\bf x}}}\\
&+\sum\limits_{|\beta|>|\alpha|/2, \beta+\kappa=\alpha}\|Z^\beta((1+\rho)(1+f_4))\|_{L^2_{{\bf x}}}\|\partial_yZ^\beta((1+\rho)(1+f_4))\|_{L^2_{{\bf x}}}\|Z^\kappa(\partial^2_yf_2)\|^2_{L^\infty_{x}(L^2_y)}\\
\lesssim &(1+P(Q(t)))(\|\rho\|^2_{m-2}+\|f_4\|^2_{m-2}+\|\partial_y\rho\|^2_{m-2}+\|\partial_yf_4\|^2_{m-2})\|\partial^2_yf_2\|_{m-3}^2,
\end{align*}
provided that $m>8$, where Lemma \ref{L4} is used in the last inequality.

Consequently, from the above estimates and \eqref{3.24} we arrive at
\begin{align}
\label{A6}
&\|(1+\rho)(1+f_4)Z^\alpha\partial^2_yf_2\|^2+\mu^2\varepsilon^2\|Z^\alpha\partial_y^3 u\|^2+2\mu\varepsilon \int(1+\rho)(1+f_4) Z^\alpha\partial^2_yf_2\cdot Z^\alpha\partial_y^3 u d{\bf x}\nonumber\\
\lesssim& (1+P(Q(t)))(\|(u, v, f_2)\|_{m}^2+\|f_1\|_{m-1}^2+P(\|f_4\|_m)+\|(\rho, f_3)\|_{m-2}^2)\nonumber\\
&+(1+P(Q(t)))(\|\partial_y(f_1,f_2)\|_{m-1}^2+\|\partial_y(\rho, f_3, f_4)\|_{m-2}^2)\nonumber\\
&+\|\partial_yp\|_{m-1}^2+\varepsilon^2\|\partial_y u\|_m^2+\varepsilon^2\|\partial_y^2v\|_{m-1}^2 \\
&+(1+P(Q(t)))(\|\rho\|^2_{m-2}+\|f_4\|^2_{m-2}+\|\partial_y\rho\|^2_{m-2}+\|\partial_yf_4\|^2_{m-2})\|\partial^2_yf_2\|_{m-3}^2. \nonumber
\end{align}

\subsubsection*{Step 2}
Applying the operator $Z^\alpha\partial^2_y\ (|\alpha|\leq m-2)$ on the equation $(1+f_4)\times(\ref{3.19})$ gives that
\begin{align}
\label{3.25}
\partial_tZ^\alpha \partial^2_yf_2-(1+f_4)Z^\alpha \partial^3_yu
=Z^\alpha \partial^2_y(f_2\partial_xu)-u\partial_xZ^\alpha \partial_y^2 f_2-v\partial_yZ^\alpha \partial_y^2f_2+\mathcal{C}_7^\alpha,
\end{align}
where
\begin{align}\label{C7}
\mathcal{C}_7^\alpha=[Z^\alpha \partial^2_y, (1+f_4)]\partial_y u-[Z^\alpha \partial^2_y, u\partial_x]f_2-[Z^\alpha \partial^2_y, v\partial_y]f_2.
\end{align}
Multiplying (\ref{3.25}) by $2\mu\varepsilon (1+\rho) Z^\alpha\partial_y^2f_2$ and integrating the resulting equality over $\mathbb{R}^2_+$ give
\begin{align}
\label{3.26}
&\mu\varepsilon\frac{d}{dt}\|(1+\rho)Z^\alpha\partial^2_y f_2\|^2-2\mu\varepsilon\int (1+\rho)(1+f_4)Z^\alpha\partial^2_yf_2 Z^\alpha\partial_y^3ud{\bf x}\\
=&2\mu\varepsilon\int (1+\rho)Z^\alpha\partial^2_yf_2 Z^\alpha\partial^2_y(f_2\partial_xu)d{\bf x}+2\mu\varepsilon\int (1+\rho) Z^\alpha\partial^2_yf_2 \mathcal{C}_7^\alpha d{\bf x}.\nonumber
\end{align}
Since
$$\partial^2_y\left(f_2\partial_xu\right)
= \partial_y^2f_2\partial_xu+2\partial_yf_2\partial_x\partial_yu+f_2\partial_x\partial^2_yu,$$
by the Cauchy-Schwarz inequality, we get
\begin{align*}
&\left|2\mu\varepsilon\int (1+\rho) Z^\alpha\partial_y^2f_2 Z^\alpha\partial^2_y\left(f_2\partial_xu\right)d{\bf x}+2\mu\varepsilon\int (1+\rho) Z^\alpha\partial_y^2f_2 \mathcal{C}_7^\alpha d{\bf x}\right|\\
%=&\left|2\mu\varepsilon\int (1+\rho)(1+f_4)  Z^\alpha\partial^2_yf_2 Z^\alpha\left(\partial_y^2\left(\frac{f_2}{1+f_4}\right)\partial_xu+2\partial_y\left(\frac{f_2}{1+f_4}\right)\partial_x\partial_yu+\frac{f_2}{1+f_4}\partial_x\partial^2_yu\right)d{\bf x}\right|\\
%&+\left|2\mu\varepsilon\int (1+\rho)(1+f_4)  Z^\alpha\partial^2_yf_2 \mathcal{C}_7^\alpha d{\bf x}\right|\\
\leq & \delta\|(1+\rho)(1+f_4)Z^\alpha\partial_y^2f_2\|^2+C_\delta\varepsilon^2\|\mathcal{C}_7^\alpha\|^2\\
&+C_\delta\varepsilon^2
\left\|Z^\alpha\left(\partial_y^2f_2\partial_xu+2\partial_yf_2\partial_x\partial_yu+f_2\partial_x\partial^2_yu\right)\right\|^2.
\end{align*}
Notice that
\begin{align*}
&\left\|Z^\alpha\left(\partial_y^2f_2\partial_xu\right)\right\|^2+\left\|Z^\alpha\left(\partial_yf_2\partial_x\partial_yu\right)\right\|^2\\
=&\left\|Z^\alpha\left(\frac{\partial_xu}{\varphi(y)}\varphi(y)\partial_y\partial_yf_2\right)\right\|^2+\left\|Z^\alpha\left(\partial_yf_2\partial_x\partial_yu\right)\right\|^2\\
\lesssim&\|\partial_yu\|^2_{1, \infty}\left\|\partial_yf_2\right\|^2_{m-1}+\left\|\partial_yf_2\right\|^2_{1,\infty}\|\partial_yu\|^2_{m-1}\\
\lesssim&(1+P(Q(t)))(\|\partial_yu\|^2_{m-1}+\|\partial_yf_2\|^2_{m-1}),
\end{align*}
and
\begin{align*}
&\left\|Z^\alpha(f_2\partial_x\partial^2_yu)\right\|^2\\
\lesssim&\sum\limits_{|\beta|\leq |\alpha|/2, \beta+\kappa=\alpha}\left\|Z^\beta f_2Z^\kappa\partial_x\partial^2_yu \right\|^2
+\sum\limits_{|\beta|>|\alpha|/2, \beta+\kappa=\alpha}\left\|Z^\beta f_2 Z^\kappa\partial_x\partial^2_yu \right\|^2\\
\lesssim&\sum\limits_{|\beta|\leq |\alpha|/2, \beta+\kappa=\alpha}\left \|Z^\beta f_2 \right\|^2_{L_{x,y}^\infty}\left\|Z^\kappa\partial_x\partial^2_yu \right \|^2_{L^2_{{\bf x}}} \\
&+\sum\limits_{|\beta|>|\alpha|/2, \beta+\kappa=\alpha}\left\|Z^\beta f_2 \right\|^2_{L^2_{x}(L^\infty_y)}\|Z^\kappa\partial_x\partial^2_yu \|^2_{L_{x}^\infty(L^2_y)}\\
\lesssim&\sum\limits_{|\beta|\leq |\alpha|/2, \beta+\kappa=\alpha}\left \|Z^\beta f_2 \right\|_{L_{x}^\infty(L^2_y)}\left \|\partial_yZ^\beta f_2 \right\|_{L_{x}^\infty(L^2_y)}\left\|Z^\kappa\partial_x\partial^2_yu \right \|^2_{L^2_{{\bf x}}} \\
&+\sum\limits_{|\beta|>|\alpha|/2, \beta+\kappa=\alpha}\left\|Z^\beta f_2 \right\|_{L^2_{{\bf x}}}\left\|\partial_yZ^\beta f_2 \right\|_{L^2_{{\bf x}}}\|Z^\kappa\partial_x\partial^2_yu \|^2_{L_{x}^\infty(L^2_y)}\\
\lesssim&(1+P(Q(t)))(\|f_2\|_{m-2}^2+\|\partial_yf_2\|^2_{m-2})\|\partial^2_yu\|_{m-1}^2,
\end{align*}
provided that $m>4$.

\subsubsection*{Step 3}
Next, we deal with the estimates for commutator of $\mathcal{C}_7^\alpha$ defined in \eqref{C7}. First,
\begin{align*}
&\left\|\left[Z^\alpha \partial^2_y, (1+f_4)\right]\partial_y u\right\|^2\\
\lesssim&\left\|Z^\alpha\left( \partial^2_y(1+f_4)\partial_yu\right)\right\|^2+\left\|Z^\alpha\left( \partial_y(1+f_4)\partial_y\partial_yu\right)\right\|^2
+\left\|\left[Z^\alpha, (1+f_4)\right]\partial^2_y\partial_yu\right\|^2.
\end{align*}
Since
\begin{align*}
\partial_yf_4=\frac{1}{1+\rho}\{-\partial_x((1+\rho) f_2)-(1+f_4)\partial_y\rho\}
\end{align*}
due to $\partial_x((1+\rho) f_2)+\partial_y((1+\rho) (1+f_4))=0$,  then
\begin{align*}
&\left\|Z^\alpha\left( \partial^2_y(1+f_4)\partial_yu\right)\right\|^2\\
=&\left\|Z^\alpha\left( \partial_y\left(\frac{1}{1+\rho}\{-\partial_x((1+\rho) f_2)-(1+f_4)\partial_y\rho\}\right)\partial_yu\right)\right\|^2\\
\lesssim&\left\|Z^\alpha\left( \partial_y\left(\frac{\partial_x((1+\rho) f_2)}{(1+\rho)}\right)\partial_yu\right)\right\|^2+\left\|Z^\alpha\left( \partial_y\left(\frac{(1+f_4)\partial_y\rho}{(1+\rho)}\right)\partial_yu\right)\right\|^2\\
\lesssim&\left\|Z^\alpha\left( \partial_y\left(\frac{\partial_x((1+\rho) f_2)}{(1+\rho)}\right)\partial_yu\right)\right\|^2+\left\|Z^\alpha\left( \partial_y\left(\frac{(1+f_4)}{(1+\rho)}\right)\partial_y\rho\partial_yu\right)\right\|^2\\
&+\left\|Z^\alpha\left(\frac{(1+f_4)\partial_yu}{(1+\rho)}\partial^2_y\rho\right)\right\|^2\\
\lesssim&(1+P(Q(t)))\Big(\|\rho\|_{m-2}^2+\|\partial_y\rho\|_{m-2}^2+\|\partial_yu\|_{m-2}^2+\|\partial^2_yu\|^2_{m-2}\\
&\qquad\qquad\qquad\qquad+\|f_4\|_{m-2}^2+\|\partial_yf_4\|^2_{m-2}\Big)\|\partial^2_y\rho\|_{m-2}^2\\
&+(1+P(Q(t)))\Big(\|\partial_y(\rho, f_2)\|_{m-1}^2+\|(\rho, f_2)\|_{m-1}^2+\|f_4\|_{m-2}^2+\|\partial_y (u, f_4)\|_{m-2}^2\Big),
\end{align*}
where  the following estimates are used in the last inequality:
\begin{align*}
&\left\|Z^\alpha\left( \partial_y\left(\frac{\partial_x((1+\rho) f_2)}{(1+\rho)}\right)\partial_yu\right)\right\|^2+\left\|Z^\alpha\left( \partial_y\left(\frac{(1+f_4)}{(1+\rho)}\right)\partial_y\rho\partial_yu\right)\right\|^2\\
\lesssim&(1+P(Q(t)))\Big(\|\partial_y(\rho, f_2)\|_{m-1}^2+\|(\rho, f_2)\|_{m-1}^2+\|f_4\|_{m-2}^2+\|\partial_y (u, f_4)\|_{m-2}^2\Big),
\end{align*}
and
\begin{align*}
&\left\|Z^\alpha\left(\frac{(1+f_4)\partial_yu}{(1+\rho)}\partial^2_y\rho\right)\right\|^2\\
\lesssim&\sum\limits_{|\beta|\leq |\alpha|/2, \beta+\kappa=\alpha}\left\|Z^\beta\left(\frac{(1+f_4)\partial_yu}{(1+\rho)}\right)Z^\kappa\partial^2_y\rho \right\|^2\\
&+\sum\limits_{|\beta|>|\alpha|/2, \beta+\kappa=\alpha}\left\|Z^\beta\left(\frac{(1+f_4)\partial_yu}{(1+\rho)}\right)Z^\kappa\partial^2_y\rho \right\|^2\\
\lesssim&\sum\limits_{|\beta|\leq |\alpha|/2, \beta+\kappa=\alpha}\left \|Z^\beta\left(\frac{(1+f_4)\partial_yu}{(1+\rho)}\right)\right\|_{L_{x,y}^\infty}\left\|Z^\kappa\partial^2_y\rho  \right \|^2_{L^2_{{\bf x}}} \\
&+\sum\limits_{|\beta|>|\alpha|/2, \beta+\kappa=\alpha}\left\|Z^\beta\left(\frac{(1+f_4)\partial_yu}{(1+\rho)}\right)\right\|_{L^2_{x}(L^\infty_y)}\|Z^\kappa\partial^2_y\rho \|^2_{L_{x}^\infty(L^2_y)}\\
\lesssim&\sum\limits_{|\beta|\leq |\alpha|/2, \beta+\kappa=\alpha}\left \|Z^\beta\left(\frac{(1+f_4)\partial_yu}{(1+\rho)}\right)\right\|_{L_{x}^\infty(L^2_y)}\left \|\partial_yZ^\beta\left(\frac{(1+f_4)\partial_yu}{(1+\rho)}\right)\right\|_{L_{x}^\infty(L^2_y)}\left\|Z^\kappa\partial^2_y\rho \right \|^2_{L^2_{{\bf x}}} \\
&+\sum\limits_{|\beta|>|\alpha|/2, \beta+\kappa=\alpha}\left\|Z^\beta\left(\frac{(1+f_4)\partial_yu}{(1+\rho)}\right)\right\|_{L^2}\left\|\partial_yZ^\beta\left(\frac{(1+f_4)\partial_yu}{(1+\rho)}\right)\right\|_{L^2}
\|Z^\kappa\partial^2_y\rho \|^2_{L_{x}^\infty(L^2_y)}\\
\lesssim&(1+P(Q(t)))\Big(\|\rho\|_{m-2}^2+\|\partial_y\rho\|_{m-2}^2+\|\partial_yu\|_{m-2}^2+\|\partial^2_yu\|^2_{m-2}\\
&\qquad\qquad\qquad\qquad+\|f_4\|_{m-2}^2+\|\partial_yf_4\|^2_{m-2}\Big)\|\partial^2_y\rho\|_{m-2}^2,
\end{align*}
provided that $m>6$.

Moreover,
\begin{align*}
&\left\|Z^\alpha\left( \partial_y(1+f_4)\partial_y\partial_yu\right)\right\|^2\\
\lesssim &(1+P(Q(t)))\|\partial_yf_4\|_{m-2}^2(\|\partial^2_yu\|_{m-2}^2+\|\partial^3_yu\|_{m-2}^2),
\end{align*}
and
\begin{align}
\label{B1}
&\left\|\left[Z^\alpha, (1+f_4)\right]\partial^2_y\partial_yu\right\|^2\nonumber\\
\lesssim &\sum\limits_{1\leq|\beta|\leq |\alpha|/2, \beta+\kappa=\alpha}\left\|Z^\beta\left(1+f_4\right)Z^\kappa \partial^3_yu \right\|^2
+\sum\limits_{|\beta|>|\alpha|/2, \beta+\kappa=\alpha}\left\|Z^\beta\left(1+f_4\right)Z^\kappa \partial^3_yu \right\|^2\nonumber\\
\lesssim&\sum\limits_{1\leq|\beta|\leq |\alpha|/2, \beta+\kappa=\alpha}\left \|Z^\beta\left(1+f_4\right)\right\|_{L_{x}^\infty(L^2_y)}\left \|\partial_yZ^\beta\left(1+f_4\right)\right\|_{L_{x}^\infty(L^2_y)}\left\|Z^\kappa \partial^3_yu \right \|^2_{L^2_{{\bf x}}} \nonumber\\
&+\sum\limits_{|\beta|>|\alpha|/2, \beta+\kappa=\alpha}\left\|Z^\beta\left(1+f_4 \right)\right\|_{L^2_{{\bf x}}}\left\|\partial_yZ^\beta\left(1+f_4\right)\right\|_{L^2_{{\bf x}}}\|Z^\kappa \partial^3_yu \|^2_{L_{x}^\infty(L^2_y)}\nonumber\\
\lesssim&(1+P(Q(t)))(\|f_4\|^2_{m-2}+\|\partial_yf_4\|^2_{m-2})\|\partial^3_yu\|_{m-3}^2,
\end{align}
provided that $m>6$.

Similarly,
\begin{align}
\label{B2}
&\left\|\left[Z^\alpha\partial^2_y, u\partial_x\right]f_2\right\|^2\nonumber\\
\lesssim &\left\|Z^\alpha(\partial^2_yu\partial_xf_2)\right\|^2+\left\|Z^\alpha(\partial_yu\partial_x\partial_yf_2)\right\|^2+\left\|[Z^\alpha, u/\varphi(y)]\partial_x\varphi(y)\partial_y^2f_2)\right\|^2\nonumber\\
\lesssim &(\|\partial_y^2u\|_{m-2}^2+\|\partial_y^3u\|_{m-2}^2)\|f_2\|_{m-1}^2+(\|\partial_yu\|_{m-2}^2+\|\partial_y^2u\|_{m-2}^2)\|\partial_yf_2\|_{m-1}^2
\end{align}
and
\begin{align}
\label{B3}
&\left\|\left[Z^\alpha\partial^2_y, v\partial_y\right]f_2\right\|^2\nonumber\\
\lesssim &\left\|Z^\alpha(\partial^2_yv\partial_yf_2)\right\|^2+\left\|Z^\alpha(\partial_yv\partial_y\partial_yf_2)\right\|^2+\left\|[Z^\alpha, v/\varphi(y)]\varphi(y)\partial_y\partial_y^2f_2)\right\|^2\nonumber\\
\lesssim &(\|\partial_y^2v\|_{m-2}^2+\|\partial_y^3v\|_{m-2}^2)\|\partial_yf_2\|_{m-2}^2+(\|\partial_yv\|_{m-2}^2+\|\partial_y^2v\|_{m-2}^2)\|\partial_y^2f_2\|_{m-2}^2.
\end{align}
\subsubsection*{Step 4}
Consequently, from \eqref{3.26} we have
\begin{align}
\label{A7}
&\mu\varepsilon\frac{d}{dt}\|(1+\rho)Z^\alpha\partial^2_y f_2\|^2-2\mu\varepsilon\int (1+\rho)(1+f_4)Z^\alpha\partial^2_yf_2 Z^\alpha\partial_y^3ud{\bf x}\nonumber\\
\lesssim&\delta\|(1+\rho)(1+f_4)Z^\alpha\partial_y^2f_2\|^2+\varepsilon^2(\|\partial_yu\|_{m-2}^2+\|\partial_y^2u\|_{m-2}^2)\|\partial_yf_2\|_{m-1}^2\nonumber\\
&+\varepsilon^2\{(\|\partial_y^2v\|_{m-2}^2+\|\partial_y^3v\|_{m-2}^2)\|\partial_yf_2\|_{m-2}^2+(\|\partial_yv\|_{m-2}^2+\|\partial_y^2v\|_{m-2}^2)\|\partial_y^2f_2\|_{m-2}^2\}\nonumber\\
&+C_\delta\varepsilon^2(1+P(Q(t)))(\|\partial_y(\rho, u, f_2))\|_{m-1}^2+\|(\rho, f_2)\|_{m-1}^2+\|f_4\|_{m-2}^2+\|\partial_yf_4\|_{m-2}^2)\nonumber\\
&+C_\delta\varepsilon^2(1+P(Q(t)))(\|f_2\|_{m-2}^2+\|\partial_yf_2\|_{m-2}^2)\|\partial_y^2u\|_{m-1}^2\\
&+C_\delta\varepsilon^2(1+P(Q(t)))(\|\rho\|_{m-2}^2+\|\partial_y\rho\|_{m-2}^2+\|\partial_yu\|_{m-2}^2+\|\partial_y^2u\|_{m-2}^2\nonumber\\
&+\|f_4\|_{m-2}^2+\|\partial_yf_4\|_{m-2}^2)\times\|\partial_y^2\rho\|_{m-2}^2\nonumber\\
&+C_\delta\varepsilon^2(\|f_4\|_{m-2}^2+\|f_2\|_{m-1}^2+\|\partial_yf_4\|_{m-2}^2)(\|\partial_y^2u\|_{m-2}^2+\|\partial_y^3u\|_{m-2}^2)\nonumber.
\end{align}
Combining (\ref{A6}) and (\ref{A7}) and choosing $\delta$ suitably small, we have
\begin{align}
\label{A8}
&\mu\varepsilon\frac{d}{dt}\|(1+\rho)Z^\alpha\partial^2_y f_2\|^2+\|(1+\rho)(1+f_4)Z^\alpha\partial^2_yf_2\|^2+\mu^2\varepsilon^2\|Z^\alpha\partial_y^3 u\|^2\nonumber\\
\lesssim& (1+P(Q(t)))(\|(u, v, f_2)\|_{m}^2+\|f_1\|_{m-1}^2+P(\|f_4\|_m)+\|(\rho, f_3)\|_{m-2}^2)\nonumber\\
&+(1+P(Q(t)))(\|\partial_y(f_1,f_2)\|_{m-1}^2+\|\partial_y(\rho, f_3, f_4)\|_{m-2}^2)\nonumber\\
&+\|\partial_yp\|_{m-1}^2+\varepsilon^2\|\partial_y u\|_m^2+\varepsilon^2\|\partial_y^2v\|_{m-1}^2+\varepsilon^2(\|\partial_yu\|_{m-2}^2+\|\partial_y^2\|_{m-2}^2)\|\partial_yf_2\|_{m-1}^2 \nonumber\\
&+(1+P(Q(t)))(\|\rho\|^2_{m-2}+\|f_4\|^2_{m-2}+\|\partial_y\rho\|^2_{m-2}+\|\partial_yf_4\|^2_{m-2})\|\partial^2_yf_2\|_{m-3}^2. \nonumber\\
&+\varepsilon^2\{(\|\partial_y^2v\|_{m-2}^2+\|\partial_y^3v\|_{m-2}^2)\|\partial_yf_2\|_{m-2}^2+(\|\partial_yv\|_{m-2}^2+\|\partial_y^2v\|_{m-2}^2)\|\partial_y^2f_2\|_{m-2}^2\}\nonumber\\
&+C_\delta\varepsilon^2(1+P(Q(t)))(\|\partial_y(\rho, u, f_2))\|_{m-1}^2+\|(\rho, f_2)\|_{m-1}^2+\|f_4\|_{m-2}^2+\|\partial_yf_4\|_{m-2}^2)\nonumber\\
&+C_\delta\varepsilon^2(1+P(Q(t)))(\|f_2\|_{m-2}^2+\|\partial_yf_2\|_{m-2}^2)\|\partial_y^2u\|_{m-1}^2\\
&+C_\delta\varepsilon^2(1+P(Q(t)))(\|\rho\|_{m-2}^2+\|\partial_y\rho\|_{m-2}^2+\|\partial_yu\|_{m-2}^2+\|\partial_y^2u\|_{m-2}^2\nonumber\\
&+\|f_4\|_{m-2}^2+\|\partial_yf_4\|_{m-2}^2)\times\|\partial_y^2\rho\|_{m-2}^2\nonumber\\
&+C_\delta\varepsilon^2(\|f_4\|_{m-2}^2+\|f_2\|_{m-1}^2+\|\partial_yf_4\|_{m-2}^2)(\|\partial_y^2u\|_{m-2}^2+\|\partial_y^3u\|_{m-2}^2)\nonumber.
\end{align}
\subsection{Estimate of $\partial_y p$}
From the second equation in (\ref{3.15}), we have
\begin{align*}
\partial_yp-(2\mu+\lambda)\varepsilon\partial_y^2v=-(1+\rho) \partial_t v-(1+\rho) u\partial_xv-(1+\rho) v\partial_yv+(1+\rho)(1+f_1)\partial_xf_3\nonumber\\
\qquad+(1+\rho) f_3\partial_yf_3+(1+\rho) f_2\partial_xf_4+(1+\rho) (1+f_4)\partial_yf_4+\mu\varepsilon\partial_x^2v+(\mu+\lambda)\varepsilon\partial_yu_x.
\end{align*}
Since $\partial_x((1+\rho)f_2)+\partial_y((1+\rho)(1+f_4))=0$, then
\begin{align*}
(1+\rho)(1+f_4)\partial_yf_4=&(1+\rho)(1+f_4)\frac{1}{(1+\rho)}\{-\partial_x((1+\rho) f_2)-(1+f_4)\partial_y\rho\}\\
=&-(1+f_4)\partial_x((1+\rho) f_2)-(1+f_4)^2\partial_y\rho\\
=&-(1+f_4)\partial_x((1+\rho) f_2)-\frac{(1+f_4)^2}{\gamma(1+\rho)^{\gamma-1}}\partial_yp,
\end{align*}
%due to $\partial_x((1+\rho)f_2)+\partial_y((1+\rho)(1+f_4))=0$.
and  consequently,
\begin{align}
\label{3.27}
&\left(1+\frac{(1+f_4)^2}{\gamma(1+\rho)^{\gamma-1}}\right)\partial_yp-(2\mu+\lambda)\varepsilon\partial_y^2v\nonumber\\
=&-(1+\rho)\partial_tv-(1+\rho) u\partial_xv-(1+\rho) v\partial_yv\\
&+(1+\rho)(1+f_1)\partial_xf_3+(1+\rho) f_3\partial_yf_3\nonumber\\
&+(1+\rho) f_2\partial_xf_4-(1+f_4)\partial_x((1+\rho)f_2)+\mu\varepsilon\partial_x^2v+(\mu+\lambda)\varepsilon\partial_yu_x. \nonumber
\end{align}

\subsubsection*{Step 1}
Applying the operator $Z^\alpha (|\alpha|\leq m-1)$ on (\ref{3.27}) leads to
\begin{align}
\label{3.28}
&\left(1+\frac{(1+f_4)^2}{\gamma(1+\rho)^{\gamma-1}}\right)Z^\alpha\partial_yp-(2\mu+\lambda)\varepsilon Z^\alpha \partial_y^2v\nonumber\\
=&Z^\alpha\left\{-(1+\rho) \partial_t v-(1+\rho)u\partial_xv-(1+\rho)v\partial_yv+(1+\rho)(1+f_1)\partial_xf_3+(1+\rho)f_3\partial_yf_3\right\}\nonumber \\
&+Z^\alpha\left\{(1+\rho) f_2\partial_xf_4-(1+f_4)\partial_x((1+\rho) f_2)+\mu\varepsilon\partial_x^2v+(\mu+\lambda)\varepsilon\partial_yu_x\right\}
+\mathcal{C}_8^\alpha,
\end{align}
where
\begin{align*}
\mathcal{C}_8^\alpha=\left[Z^\alpha, \left(1+\frac{(1+f_4)^2}{\gamma(1+\rho)^{\gamma-1}}\right)\right]\partial_y p.
\end{align*}
Taking $L^2$ inner product on the both sides of (\ref{3.28}) gives
\begin{align}
\label{3.38a}
&\left\|\left(1+\frac{(1+f_4)^2}{\gamma(1+\rho)^{\gamma-1}}\right)Z^\alpha\partial_yp\right\|^2+(2\mu+\lambda)^2\varepsilon^2\|Z^\alpha\partial_y^2v\|^2\nonumber\\
&\quad -2(2\mu+\lambda)\varepsilon\int\left(1+\frac{(1+f_4)^2}{\gamma(1+\rho)^{\gamma-1}}\right)Z^\alpha\partial_ypZ^\alpha\partial_y^2vd{\bf x}\nonumber\\
\leq&\|Z^\alpha\{-(1+\rho) \partial_t v-(1+\rho) u\partial_xv-(1+\rho) v\partial_yv+(1+\rho)(1+f_1)\partial_xf_3+(1+\rho)f_3\partial_yf_3\}\|^2\nonumber\\
&+\|Z^\alpha\{(1+\rho) f_2\partial_xf_4-(1+f_4)\partial_x((1+\rho) f_2)+\mu\varepsilon\partial_x^2v+(\mu+\lambda)\varepsilon\partial_yu_x\}\|^2+\|\mathcal{C}_8^\alpha\|^2\nonumber\\
\leq&\left\|Z^\alpha\{-(1+\rho) \partial_t v-(1+\rho) u\partial_xv-(1+\rho) \frac{v}{\varphi(y)}\varphi(y)\partial_yv+(1+\rho)(1+f_1)\partial_xf_3\}\right\|^2\nonumber\\
&+\left\|Z^\alpha\{(1+\rho) \frac{f_3}{\varphi(y)}\varphi(y)\partial_yf_3\}\right\|^2\\
&+\|Z^\alpha\{(1+\rho) f_2\partial_xf_4-(1+f_4)\partial_x((1+\rho) f_2)+\mu\varepsilon\partial_x^2v+(\mu+\lambda)\varepsilon\partial_yu_x\}\|^2+\|\mathcal{C}_8^\alpha\|^2\nonumber \\
\lesssim&(1+P(Q(t)))(\|(\rho, v, f_2, f_3, f_4)\|_m^2+\|(u, f_1)\|_{m-1}^2+\|\partial_y(v, f_3)\|_{m-1}^2)\nonumber\\
&+\varepsilon^2\|(\partial_xv, \partial_yu)\|_{m}^2+\|\mathcal{C}_8^\alpha\|^2\nonumber\\
\lesssim&(1+P(Q(t)))(\|(\rho, v, f_2, f_3, f_4)\|_m^2+\|(u, f_1)\|_{m-1}^2+P(\|\rho\|_{m-1}))\nonumber\\
&+(1+P(Q(t)))(\|\partial_y f_3\|_{m-1}^2+\|\partial_y p\|_{m-2}^2)+\varepsilon^2\|(\partial_xv, \partial_yu)\|_{m}^2,   \nonumber
\end{align}
where we have used  (\ref{3.9})  and the following estimates in the last inequality:
\begin{align*}
\|\mathcal{C}_8^\alpha\|^2=&\left\|\left[Z^\alpha, \left(1+\frac{(1+f_4)^2}{\gamma(1+\rho)^{\gamma-1}}\right)\right]\partial_y p\right\|^2\\
\lesssim&\left\|Z\left(1+\frac{(1+f_4)^2}{\gamma(1+\rho)^{\gamma-1}}\right)\right\|_{L^\infty}^2\|\partial_y p\|_{m-2}^2+\|\partial_y p\|_{L^\infty}^2\left\|Z\left(1+\frac{(1+f_4)^2}{\gamma(1+\rho)^{\gamma-1}}\right)\right\|_{m-2}^2\\
\lesssim& (1+P(Q(t)))(\|\partial_y p\|_{m-2}^2+P(\|\rho\|_{m-1})+\|f_4\|_{m-1}^2).
\end{align*}

\subsubsection*{Step 2}
From the equation of conservation of mass, we have
\begin{align}
\label{3.30}
\frac{1}{\gamma p}\partial_t p+\partial_yv=-\frac{1}{\gamma p}(u\partial_x p+v\partial_y  p)-\partial_xu.
\end{align}
Applying the operator $Z^\alpha\partial_y (|\alpha|\leq m-1)$ on (\ref{3.30}) yields
\begin{align}
\label{3.31}
\frac{1}{\gamma p}\partial_t Z^\alpha\partial_yp+Z^\alpha\partial^2_yv=-\frac{1}{\gamma p}(u\partial_x Z^\alpha\partial_y p+v\partial_y  Z^\alpha\partial_yp)-Z^\alpha\partial_y\partial_xu+\mathcal{C}_9^\alpha,
\end{align}
with
\begin{align*}
\mathcal{C}_9^\alpha=-\left[Z^\alpha\partial_y, \frac{1}{\gamma p}\partial_t\right]p-\left[Z^\alpha\partial_y, \frac{u}{\gamma p}\partial_x\right]p-\left[Z^\alpha\partial_y, \frac{v}{\gamma p}\partial_y\right]p.
\end{align*}
Multiplying (\ref{3.31}) by
$$2(2\mu+\lambda)\varepsilon\left(1+ \frac{(1+f_4)^2}{\gamma(1+\rho)^{\gamma-1}}\right) Z^\alpha\partial_y p\triangleq 2a(\rho, f_4)\varepsilon Z^\alpha\partial_y p$$
 and integrating the resulting equality over $\mathbb{R}^2_+$, we obtain
\begin{align}
\label{3.32}
&\frac{d}{dt}\varepsilon\left\|\sqrt{\frac{a(\rho, f_4)}{\gamma p}}Z^\alpha\partial_yp\right\|^2+2(2\mu+\lambda)\varepsilon\int\left(1+\frac{(1+f_4)^2}{\gamma(1+\rho)^{\gamma-1}}\right)Z^\alpha\partial_ypZ^\alpha\partial_y^2vd{\bf x}\nonumber\\
=&\varepsilon\int\left(\left(\frac{a(\rho, f_4)}{\gamma p}\right)_t+\left(\frac{u a(\rho, f_4)}{\gamma p}\right)_x+\left(\frac{v a(\rho, f_4)}{\gamma p}\right)_y\right)(Z^\alpha\partial_yp)^2d{\bf x}
\nonumber\\
&-\varepsilon\int 2a(\rho, f_4) Z^\alpha\partial_y pZ^\alpha\partial_y\partial_xu d{\bf x}+\varepsilon\int 2a(\rho, f_4) Z^\alpha\partial_y p\mathcal{C}_9^\alpha d{\bf x}.
\end{align}
First,
\begin{align*}
&\varepsilon\left|\int\left(\left(\frac{a(\rho, f_4)}{\gamma p}\right)_t+\left(\frac{u a(\rho, f_4)}{\gamma p}\right)_x+\left(\frac{v a(\rho, f_4)}{\gamma p}\right)_y\right)(Z^\alpha\partial_yp)^2d{\bf x}\right|\\
\lesssim& \varepsilon(1+P(Q(t)))\|Z^\alpha\partial_yp\|^2,
\end{align*}
and
\begin{align*}
&\left|\varepsilon\int a(\rho, f_4) Z^\alpha\partial_y pZ^\alpha\partial_y\partial_xu d{\bf x}\right|\\
\lesssim& \varepsilon(1+P(Q(t)))\|Z^\alpha\partial_y p\|\|Z^\alpha\partial_y\partial_xu\|\\
\lesssim& \delta\|Z^\alpha\partial_y p\|^2+C_\delta\varepsilon^2(1+P(Q(t)))\|Z^\alpha\partial_y\partial_xu\|^2.
\end{align*}

\subsubsection*{Step 3}
The commutator $\mathcal{C}_9^\alpha$ is estimated as follows. We note that
\begin{align*}
\varepsilon\left|\int a(\rho, f_4) Z^\alpha\partial_y p\mathcal{C}_9^\alpha d{\bf x}\right|\leq \delta\|Z^\alpha\partial_y p\|^2+C_\delta\varepsilon^2\|a(\rho, f_4) \|_{L^\infty}^2\|\mathcal{C}_9^\alpha\|^2.
\end{align*}
For the first term in $\mathcal{C}_9^\alpha$, one has
\begin{align*}
\left\|\left[Z^\alpha\partial_y, \frac{1}{\gamma p}\partial_t\right]p\right\|\leq \left\|Z^\alpha\left(\partial_y\left(\frac{1}{\gamma p}\right)\partial_tp\right)\right\|+\left\|\left[Z^\alpha, \frac{1}{\gamma p}\right]\partial_t\partial_yp\right\|,
\end{align*}
where
\begin{align*}
&\left\|Z^\alpha\left(\partial_y\left(\frac{1}{\gamma p}\right)\partial_tp\right)\right\|=\left\|Z^\alpha\left(\left(\frac{\partial_y p}{\gamma p^2}\right)\partial_tp\right)\right\|\\
\lesssim&\|\partial_yp\|_{L^\infty}\|p_t\|_{m-1}+\|p_t\|_{L^\infty}(1+P(Q(t)))(\|\partial_yp\|_{m-1}+P(\|p-1\|_{m-1}))\\
\lesssim &(1+P(Q(t)))(\|\partial_yp\|_{m-1}+P(\|p-1\|_{m})),
\end{align*}
and
\begin{align*}
&\left\|\left[Z^\alpha, \frac{1}{\gamma p}\right]\partial_t\partial_yp\right\|\\
\lesssim&\sum\limits_{|\beta|\geq 1, \beta+\kappa=\alpha}\left\|Z^\beta\left(\frac{1}{\gamma p}\right)Z^\kappa\partial_t\partial_yp\right\|\\
\lesssim&\left\|Z\left(\frac{1}{\gamma p}\right)\right\|_{L^\infty}\|\partial_yp\|_{m-1}+\|\partial_yp\|_{1, \infty}P(\|p-1\|_{m-1})\\
\lesssim &(1+P(Q(t)))(P(\|p-1\|_{m-1})+\|\partial_yp\|_{m-1}).
\end{align*}
Similarly, we have
\begin{align*}
&\left\|\left[Z^\alpha\partial_y, \frac{u}{\gamma p}\partial_x\right]p\right\|=\left\|Z^\alpha\left(\partial_y\left(\frac{u}{\gamma p}\right)\partial_x p\right)+\left[Z^\alpha, \frac{u}{\gamma p}\right]\partial_x\partial_yp\right\|\\
\lesssim&\left\|\partial_y\left(\frac{u}{\gamma p}\right)\right\|_{L^\infty}\|\partial_xp\|_{m-1}+\left\|\partial_y\left(\frac{u}{\gamma p}\right)\right\|_{m-1}\|\partial_xp\|_{L^\infty}+\sum\limits_{|\beta|\geq 1, \beta+\kappa=\alpha}\left\|Z^\beta\left(\frac{u}{\gamma p}\right)Z^\kappa\partial_x\partial_yp\right\|\\
\lesssim&\left\|\partial_y\left(\frac{u}{\gamma p}\right)\right\|_{L^\infty}\|\partial_xp\|_{m-1}+\left\|\partial_y\left(\frac{u}{\gamma p}\right)\right\|_{m-1}\|\partial_xp\|_{L^\infty}\\
&+\sum\limits_{|\beta|\geq 1, \beta+\kappa=\alpha}\left\|Z\left(\frac{u}{\gamma p}\right)\right\|_{L^\infty}\|\partial_yp\|_{m-1}+\|\partial_yp\|_{1, \infty}\left\|\frac{u}{\gamma p}\right\|_{m-1}\\
\lesssim &(1+P(Q(t)))(P(\|p-1\|_{m})+\|u\|_{m-1}+\|\partial_yp\|_{m-1}+\|\partial_yu\|_{m-1}),
\end{align*}
and
\begin{align*}
&\left\|\left[Z^\alpha\partial_y, \frac{v}{\gamma p}\right]\partial_yp\right\|\\
=&\left\|Z^\alpha\left(\partial_y\left(\frac{v}{\gamma p}\right)\partial_y p\right)+\left[Z^\alpha, \frac{v}{\gamma p}\right]\partial_y\partial_yp\right\|\\
=&\left\|Z^\alpha\left(\partial_y\left(\frac{v}{\gamma p}\right)\partial_y p\right)+\left[Z^\alpha, \frac{1}{\gamma p}\frac{v}{\varphi(y)}\right]\varphi(y)\partial_y\partial_yp\right\|\\
\lesssim&\left\|Z^\alpha\left(\partial_y\left(\frac{v}{\gamma p}\right)\partial_y p\right)\right\|+\sum\limits_{|\beta|\geq 1, \beta+\kappa=\alpha}\left\|Z^\beta\left(\frac{1}{\gamma p}\frac{v}{\varphi(y)}\right)Z^\kappa\left(\varphi(y)\partial_y\partial_yp\right)\right\|\\
\lesssim&(1+P(Q(t)))(\|\partial_y p\|_{m-1}+ \|\partial_y v\|_{m-1}+P(\|p-1\|_{m-1})+\|v\|_{m-1}).
\end{align*}
Substituting  all of the above inequalities into \eqref{3.32} gives
\begin{align}
\label{A10}
&\frac{d}{dt}\varepsilon\left\|\sqrt{\frac{a(\rho, f_4)}{\gamma p}}Z^\alpha\partial_yp\right\|^2+2(2\mu+\lambda)\varepsilon\int\left(1+\frac{(1+f_4)^2}{\gamma(1+\rho)^{\gamma-1}}\right)Z^\alpha\partial_ypZ^\alpha\partial_y^2vd{\bf x}\nonumber\\
\lesssim&C_\delta\varepsilon^2(1+P(Q(t)))(\|\partial_yp\|^2_{m-1}+\|\partial_yu\|^2_{m-1}+\|\partial_y v\|^2_{m-1}+\|u\|^2_{m-1}+\|v\|_{m-1}+P(\|p-1\|_{m}))\nonumber\\
&+\delta\|Z^\alpha\partial_yp\|^2+\varepsilon(1+P(Q(t)))\|Z^\alpha\partial_yp\|^2+C_\delta\varepsilon^2(1+P(Q(t)))\|\partial_yu\|^2_m.
\end{align}

Combining (\ref{3.38a}) and (\ref{A10}) and choosing $\delta$ and $\varepsilon$ suitably small, we have
\begin{align}
\label{A111}
&\frac{d}{dt}\varepsilon\left\|\sqrt{\frac{a(\rho, f_4)}{\gamma p}}Z^\alpha\partial_yp\right\|^2+\left\|\left(1+\frac{(1+f_4)^2}{\gamma(1+\rho)^{\gamma-1}}\right)Z^\alpha\partial_yp\right\|^2+(2\mu+\lambda)^2\varepsilon^2\|Z^\alpha\partial_y^2v\|^2\nonumber\\
\lesssim&(1+P(Q(t)))(\|(v, f_2, f_3, f_4)\|_m^2+\|(u, f_1)\|_{m-1}^2+P(\|p-1\|_{m})+\|\partial_y f_3\|_{m-1}^2+\|\partial_yp\|_{m-2}^2)\nonumber\\
&+\varepsilon^2\|(\partial_xv, \partial_yu)\|_{m}^2+C_\delta\varepsilon^2(1+P(Q(t)))(\|\partial_yp\|^2_{m-1}+\|\partial_yu\|^2_{m-1}+\|\partial_y v\|^2_{m-1}).
\end{align}
We remark that in order to derive \eqref{A111} we have used    the equivalence between $\|\rho\|_{m}$ and  $\|p-1\|_{m}$, and the {\it a priori} assumption that $\|\rho\|_{L^\infty}\leq 1/2, \|f_4\|_{L^\infty}\leq 1/2$ and $Q(t)\leq C$; moreover, the smallness of $\varepsilon$ is required, which is used to absorb the term of $\varepsilon(1+P(Q(t)))\|Z^\alpha\partial_yp\|$ on the right hand side.

Summing\ (\ref{A111}) over $|\alpha|\leq m-1$, choosing $\varepsilon$ suitably small and using the mathematical induction argument yield that
\begin{align}
\label{A11}
&\frac{d}{dt}\varepsilon\left\|\sqrt{\frac{a(\rho, f_4)}{\gamma p}}\partial_yp\right\|_{m-1}^2+\left\|\left(1+\frac{(1+f_4)^2}{\gamma(1+\rho)^{\gamma-1}}\right)\partial_yp\right\|_{m-1}^2+(2\mu+\lambda)^2\varepsilon^2\|Z^\alpha\partial_y^2v\|_{m-1}^2\nonumber\\
\lesssim&(1+P(Q(t)))(\|(u, v, f_2, f_3, f_4)\|_m^2+\|f_1\|_{m-1}^2+P(\|p-1\|_{m})+\|\partial_y f_3\|_{m-1}^2)\\
&+\varepsilon^2\|(\partial_xv, \partial_yu)\|_{m}^2. \nonumber
\end{align}
where (\ref{3.9}) and (\ref{3.12}) are used.
\subsection{Estimate of $\partial_y^2 p$} We now derive the estimates on $\partial_y^2 p$.

\medskip

\subsubsection*{Step 1}
Applying the operator $Z^\alpha\partial_y\ (|\alpha|\leq m-2)$ on (\ref{3.27}) leads to
\begin{align}
\label{3.34}
&\left(1+\frac{(1+f_4)^2}{\gamma(1+\rho)^{\gamma-1}}\right)Z^\alpha\partial^2_yp-(2\mu+\lambda)\varepsilon Z^\alpha \partial_y^3v\nonumber\\
=&Z^\alpha\partial_y\{-(1+\rho) \partial_t v-(1+\rho) u\partial_xv-(1+\rho) v\partial_yv+(1+\rho)(1+f_1)\partial_xf_3+(1+\rho) f_3\partial_yf_3\}\nonumber\\
&+Z^\alpha\partial_y\{(1+\rho) f_2\partial_xf_4-(1+f_4)\partial_x((1+\rho) f_2)+\mu\varepsilon\partial_x^2v+(\mu+\lambda)\varepsilon\partial_yu_x\}+\mathcal{C}_{10}^\alpha
\end{align}
with
\begin{align*}
\mathcal{C}_{10}^\alpha=\left[Z^\alpha\partial_y, (1+\frac{(1+f_4)^2}{\gamma(1+\rho)^{\gamma-1}})\right]\partial_y p.
\end{align*}
Taking $L^2$ inner product on both sides of (\ref{3.34}), we arrive at
\begin{align}
\label{3.35}
&\left\|\left(1+\frac{(1+f_4)^2}{\gamma(1+\rho)^{\gamma-1}}\right)Z^\alpha\partial^2_yp\right\|^2+(2\mu+\lambda)^2\varepsilon^2\|Z^\alpha\partial_y^3v\|^2\nonumber\\
&-2(2\mu+\lambda)\varepsilon\int\left(1+\frac{(1+f_4)^2}{\gamma(1+\rho)^{\gamma-1}}\right)Z^\alpha\partial^2_ypZ^\alpha\partial_y^3vd{\bf x}\nonumber\\
\leq&\|Z^\alpha\partial_y\{-(1+\rho)\partial_t v-(1+\rho) u\partial_xv-(1+\rho) v\partial_yv+(1+\rho)(1+f_1)\partial_xf_3+(1+\rho)f_3\partial_yf_3\}\|^2\nonumber\\
&+\|Z^\alpha\partial_y\{(1+\rho) f_2\partial_xf_4-(1+f_4)\partial_x((1+\rho) f_2)+\mu\varepsilon\partial_x^2v+(\mu+\lambda)\varepsilon\partial_yu_x\}\|^2+\|\mathcal{C}_{10}^\alpha\|^2\nonumber\\
\lesssim& (1+P(Q(t)))(\|(\rho, v, f_2, f_3, f_4)\|_{m-1}^2+\|(u, f_1)\|_{m-2}^2)\nonumber\\
&+(1+P(Q(t)))(\|\partial_y(\rho, v, f_2, f_3, f_4)\|_{m-1}^2+\| \partial_y(u, f_1)\|_{m-2}^2)+\varepsilon^2\|\partial_yv\|_{m}^2 \\
& +\varepsilon^2\|\partial^2_yu\|_{m-1}^2+\|\mathcal{C}_{10}^\alpha\|^2\nonumber\\
\lesssim& (1+P(Q(t)))(\|(\rho, f_2, f_3)\|_{m-1}^2+\|(u, f_1)\|_{m-2}^2+P(\|f_4\|_m)+\|v\|^2_m) \nonumber \\
&+(1+P(Q(t)))(\|\partial_y(\rho, f_2, f_3, f_4)\|_{m-1}^2+\| \partial_y f_1\|_{m-2}^2)+\varepsilon^2\|\partial_yv\|_{m}^2+\varepsilon^2\|\partial^2_yu\|_{m-1}^2+\|\mathcal{C}_{10}^\alpha\|^2\nonumber.
\end{align}
It  remains to estimate the term of commutator $\mathcal{C}_{10}^\alpha$. First we note that
\begin{align*}
\|\mathcal{C}_{10}^\alpha\|^2=&\left\|\left[Z^\alpha\partial_y, \left(1+\frac{(1+f_4)^2}{\gamma(1+\rho)^{\gamma-1}}\right)\right]\partial_y p\right\|^2\\
=&\left\|Z^\alpha\left(\partial_y\left(\frac{(1+f_4)^2}{\gamma(1+\rho)^{\gamma-1}}\right)\partial_yp\right)\right\|+\left\|\left[Z^\alpha, \left(1+\frac{(1+f_4)^2}{\gamma(1+\rho)^{\gamma-1}}\right)\right]\partial^2_y p\right\|\\
\lesssim&(1+P(Q(t)))(P(\|f_4\|_{m-2})+P(\|\rho\|_{m-2})+\|\partial_y(\rho, f_4)\|_{m-2}^2)\\
&+\sum\limits_{|\beta|\geq 1, \beta+\kappa=\alpha}\left\|Z^\beta\left(\frac{(1+f_4)^2}{\gamma(1+\rho)^{\gamma-1}}\right)Z^\kappa\partial_y^2p\right\|^2.
\end{align*}
By the similar arguments to  (\ref{B1}), we have
\begin{align*}
&\sum\limits_{|\beta|\geq 1, \beta+\kappa=\alpha}\left\|Z^\beta\left(\frac{(1+f_4)^2}{\gamma(1+\rho)^{\gamma-1}}\right)Z^\kappa\partial_y^2p\right\|^2\\
\lesssim &\sum\limits_{1\leq|\beta|\leq |\alpha|/2, \beta+\kappa=\alpha}\|Z^\kappa\partial_y^2p\|^2_{L^2_{{\bf x}}}\left\|Z^\beta\left(\frac{(1+f_4)^2}{\gamma(1+\rho)^{\gamma-1}}\right)\right\|_{L^\infty}^2 \\
&+\sum\limits_{|\beta|> |\alpha|/2, \beta+\kappa=\alpha} \|Z^\kappa\partial_y^2p\|^2_{L^\infty_x(L^2_y)}\left\|Z^\beta\left(\frac{(1+f_4)^2}{\gamma(1+\rho)^{\gamma-1}}\right)\right\|_{L^2_x(L^\infty_y)}^2\\
\lesssim &\sum\limits_{1\leq|\beta|\leq |\alpha|/2, \beta+\kappa=\alpha}\|Z^\kappa\partial_y^2p\|^2_{L^2_{{\bf x}}}\left\|Z^\beta\left(\frac{(1+f_4)^2}{\gamma(1+\rho)^{\gamma-1}}\right)\right\|_{L^\infty_x(L^2_y)}
\left\|\partial_yZ^\beta\left(\frac{(1+f_4)^2}{\gamma(1+\rho)^{\gamma-1}}\right)\right\|_{L^\infty_x(L^2_y)}\\
&+\sum\limits_{|\beta|> |\alpha|/2, \beta+\kappa=\alpha}\|Z^\kappa\partial_y^2p\|^2_{L^\infty_x(L^2_y)}\left\|Z^\beta\left(\frac{(1+f_4)^2}{\gamma(1+\rho)^{\gamma-1}}\right)\right\|_{L^2_{{\bf x}}}
\left\|\partial_yZ^\beta\left(\frac{(1+f_4)^2}{\gamma(1+\rho)^{\gamma-1}}\right)\right\|_{L^2_{{\bf x}}}\\
\lesssim &(1+P(Q(t)))(P(\|f_4\|_{m-2})+P(\|\rho\|_{m-2})+\|\partial_yf_4\|^2_{m-2}+\|\partial_y\rho\|^2_{m-2})\|\partial_y^2p\|^2_{m-3},
\end{align*}
provided that $m>8$.
Consequently,
\begin{align*}
\|\mathcal{C}_{10}^\alpha\|^2\lesssim& (1+P(Q(t)))(P(\|f_4\|_{m-2})+P(\|\rho\|_{m-2})+\|\partial_yf_4\|^2_{m-2}+\|\partial_y\rho\|^2_{m-2})\|\partial_y^2p\|^2_{m-3}\\
&+(1+P(Q(t)))(P(\|f_4\|_{m-2})+P(\|\rho\|_{m-2})+\|\partial_y(\rho, f_4)\|_{m-2}^2).
\end{align*}
Then, we have from \eqref{3.35},
\begin{align}
\label{A12a}
&\left\|\left(1+\frac{(1+f_4)^2}{\gamma(1+\rho)^{\gamma-1}}\right)Z^\alpha\partial^2_yp\right\|^2+(2\mu+\lambda)^2\varepsilon^2\|Z^\alpha\partial_y^3v\|^2\nonumber\\
&-2(2\mu+\lambda)\varepsilon\int\left(1+\frac{(1+f_4)^2}{\gamma(1+\rho)^{\gamma-1}}\right)Z^\alpha\partial^2_ypZ^\alpha\partial_y^3vd{\bf x}\nonumber\\
\lesssim& (1+P(Q(t)))(\|(\rho, f_2, f_3)\|_{m-1}^2+\|(u, f_1)\|_{m-2}^2+P(\|f_4\|_m)+\|v\|_m^2)\nonumber\\
&+(1+P(Q(t)))(\|\partial_y(\rho, f_2, f_3, f_4)\|_{m-1}^2+\| \partial_y f_1\|_{m-2}^2)+\varepsilon^2\|\partial_yv\|_{m}^2+\varepsilon^2\| \partial^2_yu\|_{m-1}^2\nonumber\\
&+(1+P(Q(t)))(P(\|f_4\|_{m-2})+P(\|\rho\|_{m-2})+\|\partial_yf_4\|^2_{m-2}+\|\partial_y\rho\|^2_{m-2})\|\partial_y^2p\|^2_{m-3}.
\end{align}
\medskip

\subsubsection*{Step 2}
Again, from the equation of conservation of mass, we have
\begin{align}
\label{3.36}
\partial_t p+\gamma p\partial_yv=-(u\partial_x p+v\partial_y  p)-\gamma p\partial_xu.
\end{align}
Applying the operator $Z^\alpha\partial^2_y\ (|\alpha|\leq m-2)$ on (\ref{3.36}) yields
\begin{align}
\label{3.37}
\partial_t Z^\alpha\partial^2_yp+\gamma pZ^\alpha\partial^3_yv=-u\partial_x Z^\alpha\partial^2_y p-v\partial_y  Z^\alpha\partial^2_yp-Z^\alpha\partial^2_y(\gamma p\partial_xu)+\mathcal{C}_{11}^\alpha,
\end{align}
with
\begin{align*}
\mathcal{C}_{11}^\alpha=-\left[Z^\alpha\partial^2_y, \gamma p\right]\partial_y v-\left[Z^\alpha\partial^2_y,  u \partial_x\right]p-\left[Z^\alpha\partial^2_y,  v \partial_y\right]p.
\end{align*}
Multiplying (\ref{3.37}) by $$2(2\mu+\lambda)\varepsilon\frac{1}{\gamma p}\left(1+ \frac{(1+f_4)^2}{\gamma(1+\rho)^{\gamma-1}}\right) Z^\alpha\partial^2_y p\triangleq 2\frac{a(\rho, f_4)}{\gamma p}\varepsilon Z^\alpha\partial^2_y p$$  and integrating the resulting equality over $\mathbb{R}^2_+$, we get
\begin{align}
\label{3.38}
&\frac{d}{dt}\varepsilon\left\|\sqrt{\frac{a(\rho, f_4)}{\gamma p}}Z^\alpha\partial^2_yp\right\|^2+2(2\mu+\lambda)\varepsilon\int\left(1+\frac{(1+f_4)^2}{\gamma(1+\rho)^{\gamma-1}}\right)Z^\alpha\partial^2_ypZ^\alpha\partial_y^3vd{\bf x}\nonumber\\
=&\varepsilon\int\left(\left(\frac{a(\rho, f_4)}{\gamma p}\right)_t+\left(\frac{u a(\rho, f_4)}{\gamma p}\right)_x+\left(\frac{v a(\rho, f_4)}{\gamma p}\right)_y\right)(Z^\alpha\partial^2_yp)^2d{\bf x}
\nonumber\\
&-\varepsilon\int 2\frac{a(\rho, f_4)}{\gamma p} Z^\alpha\partial^2_y pZ^\alpha\partial^2_y(\gamma p\partial_xu) d{\bf x}+\int 2\frac{a(\rho, f_4)}{\gamma p}\varepsilon Z^\alpha\partial^2_y p\mathcal{C}_{11}^\alpha d{\bf x}.
\end{align}
First,
\begin{align*}
&\varepsilon\left|\int\left(\left(\frac{a(\rho, f_4)}{\gamma p}\right)_t+\left(\frac{u a(\rho, f_4)}{\gamma p}\right)_x+\left(\frac{v a(\rho, f_4)}{\gamma p}\right)_y\right)(Z^\alpha\partial^2_yp)^2d{\bf x}\right|\\
\lesssim& \varepsilon(1+P(Q(t)))\|Z^\alpha\partial^2_yp\|^2,
\end{align*}
and
\begin{align*}
&|\varepsilon\int \frac{a(\rho, f_4)}{\gamma p} Z^\alpha\partial^2_y pZ^\alpha\partial^2_y(\gamma p\partial_xu) d{\bf x}|\\
\lesssim& \delta\|Z^\alpha\partial^2_y p\|^2+C_\delta \varepsilon^2(1+P(Q(t)))\|Z^\alpha\partial^2_y(\gamma p\partial_xu)\|^2\\
\lesssim& \delta\|Z^\alpha\partial^2_y p\|^2+C_\delta \varepsilon^2(1+P(Q(t)))(\|Z^\alpha(\partial^2_y p\partial_xu)\|^2+\|Z^\alpha(\partial_y p\partial_x\partial_yu)\|^2+\|Z^\alpha( p\partial_x\partial^2_y u)\|^2).
\end{align*}
Note that
\begin{align*}
&\left\|Z^\alpha\left(\partial_y^2p\partial_xu\right)\right\|^2+\left\|Z^\alpha\left(\partial_yp\partial_x\partial_yu\right)\right\|^2\\
=&\left\|Z^\alpha\left(\frac{\partial_xu}{\varphi(y)}\varphi(y)\partial_y\partial_yp\right)\right\|^2+\left\|Z^\alpha\left(\partial_yp\partial_x\partial_yu\right)\right\|^2\\
\lesssim&\|\partial_yu\|^2_{1, \infty}\left\|\partial_yp\right\|^2_{m-1}+\left\|\partial_yp\right\|^2_{1,\infty}\|\partial_yu\|^2_{m-1}\\
\lesssim&(1+P(Q(t)))(\|\partial_yu\|^2_{m-1}+\|\partial_yp\|^2_{m-1}),
\end{align*}
and
\begin{align*}
&\left\|Z^\alpha(p\partial_x\partial^2_yu)\right\|^2\\
\lesssim&\sum\limits_{|\beta|\leq |\alpha|/2, \beta+\kappa=\alpha}\left\|Z^\beta pZ^\kappa\partial_x\partial^2_yu \right\|^2
+\sum\limits_{|\beta|>|\alpha|/2, \beta+\kappa=\alpha}\left\|Z^\beta p Z^\kappa\partial_x\partial^2_yu \right\|^2\\
\lesssim&\sum\limits_{|\beta|\leq |\alpha|/2, \beta+\kappa=\alpha}\left \|Z^\beta p \right\|^2_{L_{x,y}^\infty}\left\|Z^\kappa\partial_x\partial^2_yu \right \|^2_{L^2_{{\bf x}}} \\
&+\sum\limits_{|\beta|>|\alpha|/2, \beta+\kappa=\alpha}\left\|Z^\beta p \right\|^2_{L^2_{x}(L^\infty_y)}\|Z^\kappa\partial_x\partial^2_yu \|^2_{L_{x}^\infty(L^2_y)}\\
\lesssim&\sum\limits_{|\beta|\leq |\alpha|/2, \beta+\kappa=\alpha}\left \|Z^\beta p \right\|_{L_{x}^\infty(L^2_y)}\left \|\partial_yZ^\beta p \right\|_{L_{x}^\infty(L^2_y)}\left\|Z^\kappa\partial_x\partial^2_yu \right \|^2_{L^2_{{\bf x}}} \\
&+\sum\limits_{|\beta|>|\alpha|/2, \beta+\kappa=\alpha}\left\|Z^\beta p \right\|_{L^2_{{\bf x}}}\left\|\partial_yZ^\beta p \right\|_{L^2_{{\bf x}}}\|Z^\kappa\partial_x\partial^2_yu \|^2_{L_{x}^\infty(L^2_y)}\\
\lesssim&(1+P(Q(t)))(1+\|p-1\|_{m-2}^2+\|\partial_yp\|^2_{m-2})\|\partial^2_yu\|_{m-1}^2,
\end{align*}
provided that $m>4$.

The commutator of $\mathcal{C}_{11}^\alpha$ is estimated as follows.
For the first term in $\mathcal{C}_{11}^\alpha$, it is noted that
\begin{align*}
\left\|\left[Z^\alpha\partial^2_y, \gamma p\right]\partial_yv\right\|\leq \left\|\gamma Z^\alpha\left(\partial_y^2p\partial_yv\right)\right\|+\left\|2\gamma Z^\alpha\left(\partial_y p\partial^2_yv\right)\right\|+\left\|[Z^\alpha, \gamma p]\partial^3_yv\right\|.
\end{align*}
For the above three terms on the right hand side, we have the following estimates:
\begin{align*}
&\left\|Z^\alpha\left(\partial_y^2 p\partial_yv\right)\right\|^2\\
=&\int \left(\sum\limits_{|\beta|\leq |\alpha|/2, \beta+\kappa=\alpha}Z^\beta(\partial_y^2p)Z^\kappa\partial_yv+\sum\limits_{|\beta|> |\alpha|/2, \beta+\kappa=\alpha}Z^\beta(\partial_y^2 p)Z^\kappa\partial_yv\right)^2d{\bf x}\\
\lesssim&\int \left(\sum\limits_{|\beta|\leq |\alpha|/2, \beta+\kappa=\alpha}Z^\beta\left(\partial_y^2p\right)Z^\kappa\partial_yv\right)^2d{\bf x} \\
&+\int\left(\sum\limits_{|\beta|> |\alpha|/2, \beta+\kappa=\alpha}Z^\beta\left(\partial_y^2 p\right)Z^\kappa\partial_yv\right)^2d{\bf x}\\
\lesssim &\sum\limits_{|\beta|\leq |\alpha|/2, \beta+\kappa=\alpha}\|Z^\kappa\partial_yv\|^2_{L^2_x(L^\infty_{y})}\left\|Z^\beta\partial_y^2 p\right\|_{L^\infty_x(L^2_{y})}^2\\
&+\sum\limits_{|\beta|> |\alpha|/2, \beta+\kappa=\alpha}\|Z^\kappa\partial_yv\|^2_{L^\infty_x(L^\infty_{y})}\left\|Z^\beta\partial_y^2 p\right\|_{L^2_x(L^2_{y})}^2\\
\lesssim &\sum\limits_{|\beta|\leq |\alpha|/2, \beta+\kappa=\alpha}\|Z^\kappa\partial_yv\|_{L^2_x(L^2_{y})}\|\partial_yZ^\kappa\partial_yv\|_{L^2_x(L^2_{y})}\left\|Z^\beta\partial_y^2 p\right\|_{L^2_x(L^2_{y})}\left\|Z^{\beta+1}\partial_y^2 p\right\|_{L^\infty_x(L^2_{y})}\\
&+\sum\limits_{|\beta|> |\alpha|/2, \beta+\kappa=\alpha}\|Z^\kappa\partial_yv\|^{1/2}_{L^2_x(L^2_{y})}\|\partial_yZ^\kappa\partial_yv\|^{1/2}_{L^2_x(L^2_{y})}\|Z^{\kappa+1}\partial_yv\|^{1/2}_{L^2_x(L^2_{y})}\\
&\qquad\qquad\qquad\qquad\qquad  \times \|\partial_yZ^{\kappa+1}\partial_yv\|^{1/2}_{L^2_x(L^2_{y})}\left\|Z^\beta\partial_y^2 p\right\|_{L^2_x(L^2_{y})}^2\\
\lesssim &(1+P(Q(t)))(\|\partial_yv\|^2_{m-2}+\|\partial^2_yv\|_{m-2}^2)\|\partial_y^2p\|_{m-2}^2,
\end{align*}
and similarly,
\begin{align*}
\left\|\left[Z^\alpha, \gamma p\right]\partial_y\partial_y^2v\right\|^2\lesssim (1+P(Q(t)))(\|p-1\|_{m-2}^2+\|\partial_yp\|_{m-2}^2)\|\partial_y^3v\|_{m-3}^2,
\end{align*}
as well as
\begin{align*}
\left\|2Z^\alpha\left(\partial_y(\gamma p)\partial_y\partial_yv\right)\right\|^2
\lesssim(1+P(Q(t)))\|\partial_yp\|^2_{m-2}(\|\partial_y^2 v\|_{m-2}^2+\|\partial_y^3v\|_{m-2}^2).
\end{align*}
Consequently, for the first term in $\mathcal{C}_{11}^\alpha$, one has
\begin{align} \label{446}
&\left\|\left[Z^\alpha\partial^2_y, \gamma p\right]\partial_yv\right\|\nonumber\\
\lesssim&(1+P(Q(t)))\big\{\|\partial_yv\|^2_{m-2}+\|\partial^2_yv\|_{m-2}^2)\|\partial_y^2p\|_{m-2}^2\big\}\\
&+(1+P(Q(t)))\big\{(\|p-1\|_{m-2}^2+\|\partial_yp\|_{m-2}^2)\|\partial_y^3v\|_{m-3}^2+\|\partial_yp\|^2_{m-2}(\|\partial_y^2 v\|_{m-2}^2+\|\partial_y^3v\|_{m-2}^2)\big\}\nonumber.
\end{align}
The second term in $\mathcal{C}_{11}^\alpha$ can be dealt with similarly as  the following:
\begin{align} \label{447}
&\|\left[Z^\alpha\partial^2_y,  u \partial_x\right]p\|^2\nonumber\\
\lesssim&(1+P(Q(t)))\big\{\|\partial_yu\|^2_{m-2}+\|\partial_yp\|_{m-1}^2\big\}\\
&+(1+P(Q(t)))\big\{(\|p-1\|_{m-1}^2+\|\partial_yp\|_{m-1}^2)\|\partial_y^2u\|_{m-2}^2+\|\partial_y^2p\|^2_{m-2}(\|u\|_{m-2}^2+\|\partial_yu\|_{m-2}^2)\big\}\nonumber.
\end{align}
For the third term in $\mathcal{C}_{11}^\alpha$, we notice that
\begin{align*}
\left[Z^\alpha\partial^2_y,  v \partial_y\right]p
=&Z^\alpha\left(\partial^2_y v\partial_yp\right)+Z^\alpha\left(2\partial_yv\partial^2_yp\right)+\left[Z^\alpha, v\right]\partial^3_yp\\
=&Z^\alpha\left(\partial^2_y v\partial_yp\right)+2 Z^\alpha(\partial_yv\partial^2_yp)+\left[Z^\alpha, \frac{v}{\varphi(y)}\right]\varphi(y)\partial_y\partial^2_yp,
\end{align*}
which  can also be handled similarly as the following:
\begin{align}\label{448}
&\left\|\left[Z^\alpha\partial^2_y,  v \partial_y\right]p\right\| \nonumber \\
\lesssim&(1+P(Q(t)))\big\{\|\partial_yp\|_{m-2}^2(\|\partial_y^2v\|_{m-2}^2+\|\partial_y^3v\|_{m-2}^2)+\|\partial_y^2p\|^2_{m-2}(\|\partial_yv\|_{m-2}^2+\|\partial_y^2v\|_{m-2}^2)\big\}.
\end{align}
Consequently, we have from \eqref{3.38}-\eqref{448},
\begin{align}
\label{A13}
&\frac{d}{dt}\varepsilon\left\|\sqrt{\frac{a(\rho, f_4)}{\gamma p}}Z^\alpha\partial^2_yp\right\|^2+2(2\mu+\lambda)\varepsilon\int\left(1+\frac{(1+f_4)^2}{\gamma(1+\rho)^{\gamma-1}}\right)Z^\alpha\partial^2_ypZ^\alpha\partial_y^3vd{\bf x}\nonumber\\
&\lesssim (\delta+(1+P(Q(t)))\varepsilon)\|Z^\alpha\partial^2_yp\|^2+C_\delta\varepsilon^2(1+P(Q(t)))(\|\partial_yu\|^2_{m-1}+\|\partial_yp\|^2_{m-1})\nonumber\\
& +C_\delta\varepsilon^2(1+P(Q(t)))(1+\|p-1\|_{m-2}^2+\|\partial_yp\|^2_{m-2})\|\partial^2_yu\|_{m-1}^2\\
& +C_\delta\varepsilon^2(1+P(Q(t)))\big\{(\|\partial_yv\|^2_{m-2}+\|\partial^2_yv\|_{m-2}^2)\|\partial_y^2p\|_{m-2}^2\big\}\nonumber\\
&+C_\delta\varepsilon^2(1+P(Q(t)))\big\{(\|p-1\|_{m-2}^2+\|\partial_yp\|_{m-2}^2)\|\partial_y^3v\|_{m-3}^2+\|\partial_yp\|^2_{m-2}(\|\partial_y^2 v\|_{m-2}^2+\|\partial_y^3v\|_{m-2}^2)\big\}\nonumber\\
&+C_\delta\varepsilon^2(1+P(Q(t)))\big\{(\|p-1\|_{m-1}^2+\|\partial_yp\|_{m-1}^2)\|\partial_y^2u\|_{m-2}^2+\|\partial_y^2p\|^2_{m-2}(\|u\|_{m-2}^2+\|\partial_yu\|_{m-2}^2)\big\}\nonumber
\end{align}

\medskip

\subsubsection*{Step 3}
Combining (\ref{A12a}) and (\ref{A13}) together and choosing $\delta$ and $\varepsilon$ suitably small lead to
\begin{align}
\label{A14}
&\frac{d}{dt}\varepsilon\left\|\sqrt{\frac{a(\rho, f_4)}{\gamma p}}Z^\alpha\partial^2_yp\right\|^2+\left\|\left(1+\frac{(1+f_4)^2}{\gamma(1+\rho)^{\gamma-1}}\right)Z^\alpha\partial^2_yp\right\|^2+(2\mu+\lambda)^2\varepsilon^2\|Z^\alpha\partial_y^3v\|^2\nonumber\\
&\lesssim (1+P(Q(t)))(\|(\rho, f_2, f_3)\|_{m-1}^2+\|(u, f_1)\|_{m-2}^2+P(\|f_4\|_m)+\|v\|^2_m)\nonumber\\
&+(1+P(Q(t)))(\|\partial_y(\rho, f_2, f_3, f_4)\|_{m-1}^2+\| \partial_y f_1\|_{m-2}^2)+\varepsilon^2\|\partial_yv\|_{m}^2+\varepsilon^2\| \partial^2_yu\|_{m-1}^2\nonumber\\
&+(1+P(Q(t)))(P(\|f_4\|_{m-2})+P(\|\rho\|_{m-2})+\|\partial_yf_4\|^2_{m-2}+\|\partial_y\rho\|^2_{m-2})\|\partial_y^2p\|^2_{m-3}\nonumber\\
&+C_\delta\varepsilon^2(1+P(Q(t)))(\|\partial_yu\|^2_{m-1}+\|\partial_yp\|^2_{m-1})\nonumber\\
& +C_\delta\varepsilon^2(1+P(Q(t)))(1+\|p-1\|_{m-2}^2+\|\partial_yp\|^2_{m-2})\|\partial^2_yu\|_{m-1}^2\\
& +C_\delta\varepsilon^2(1+P(Q(t)))\big\{(\|\partial_yv\|^2_{m-2}+\|\partial^2_yv\|_{m-2}^2)\|\partial_y^2p\|_{m-2}^2\big\}\nonumber\\
&+C_\delta\varepsilon^2(1+P(Q(t)))\big\{(\|p-1\|_{m-2}^2+\|\partial_yp\|_{m-2}^2)\|\partial_y^3v\|_{m-3}^2+\|\partial_yp\|^2_{m-2}(\|\partial_y^2 v\|_{m-2}^2+\|\partial_y^3v\|_{m-2}^2)\big\}\nonumber\\
&+C_\delta\varepsilon^2(1+P(Q(t)))\big\{(\|p-1\|_{m-1}^2+\|\partial_yp\|_{m-1}^2)\|\partial_y^2u\|_{m-2}^2+\|\partial_y^2p\|^2_{m-2}(\|u\|_{m-2}^2+\|\partial_yu\|_{m-2}^2)\big\}\nonumber
\end{align}
\subsection{Estimates of $\partial_y f_1$ and $\partial^2_y f_1$}
As for the normal derivatives of $f_1$, we use the following formulation
\begin{align*}
(1+f_1)(1+f_4)-f_2f_3=\frac{1}{1+\rho}
\end{align*}
due to $(1+\rho)\hbox{det}{\bf F}=1$. Then
\begin{align*}
\partial_yf_1=&\frac{1}{(1+f_4)}\left\{\partial_y\left(\frac{1}{1+\rho}\right)+\partial_y(f_2f_3)-(1+f_1)\partial_yf_4\right\}\\
=&\frac{1}{(1+f_4)}\left\{\partial_y\left(\frac{1}{1+\rho}\right)+f_3\partial_yf_2-\frac{f_2}{1+\rho}(f_3\partial_y\rho+\partial_x((1+\rho) (1+f_1)))\right\}\\
&+\frac{1}{(1+f_4)}\left\{\frac{1+f_1}{1+\rho}((1+f_4)\partial_y\rho+\partial_x((1+\rho) f_2))\right\}.
\end{align*}
due to
\begin{align*}
\partial_x((1+\rho) (1+f_1))+\partial_y((1+\rho) f_3)=0,\qquad \partial_x((1+\rho) f_2)+\partial_y((1+\rho) (1+f_4))=0.
\end{align*}
Then, the following two inequalities hold true:
\begin{align}\label{A20}
&\|\partial_yf_1\|^2_{m-1}\nonumber\\
\lesssim& (1+P(Q(t)))(\|\partial_y\rho\|_{m-1}^2+\|f_2\|_{m}^2+\|\partial_yf_3\|_{m-1}^2+P(\|\rho\|_{m})+\|f_1\|_{m}^2+P(\|f_4\|_{m-1}))
\end{align}
and
\begin{align}
\label{A21}
&\|\partial_y^2f_1\|^2_{m-2}\nonumber\\
\lesssim& (1+P(Q(t)))(\|f_2\|_{m}^2+P(\|\rho\|_{m})+\|f_1\|_{m}^2+P(\|f_4\|_{m-1}))\nonumber\\
&+(1+P(Q(t)))(\|\partial^2_y\rho\|_{m-2}^2+\|\partial_y\rho\|_{m-1}^2+\|\partial_yf_2\|_{m-1}^2+\|\partial_yf_3\|_{m-1}^2+\|\partial_yf_4\|_{m-2}^2),
\end{align}
where (\ref{A20}) is used.
\subsection{Estimates of $\partial^i_y f_3$ and $\partial^i_y f_4\ (i=1,2)$}
By the divergence free conditions, we have
\begin{align*}
\partial_yf_3=\frac{1}{1+\rho}\{-\partial_x((1+\rho)(1+f_1))-f_3\partial_y\rho\},
\end{align*}
and
\begin{align*}
\partial_yf_4=\frac{1}{1+\rho}\{-\partial_x((1+\rho)f_2)-(1+f_4)\partial_y\rho\}.
\end{align*}
Consequently,
\begin{align*}
\|\partial_yf_3\|_{m-1}^2\lesssim (1+P(Q(t)))(\|\rho\|_m^2+\|f_1\|_m^2)+\|\rho\|_{1,\infty}^2\|\partial_yf_3\|_{m-1}^2,
\end{align*}
where we  have used the following argument:
\begin{align*}
\|f_3\partial_y\rho\|_{m-1}=&\left\|\frac{f_3}{\varphi(y)}\varphi(y)\partial_y\rho\right\|_{m-1}\\
\lesssim&\|\partial_yf_3\|_{L^\infty}\|\rho\|_m+\|\varphi(y)\partial_y\rho\|_{L^\infty}\|\partial_yf_3\|_{m-1}.
\end{align*}
By using the {\it a priori} assumption of $\|\rho\|_{1,\infty}\leq C_0\sigma_0$ with $\sigma_0$ being sufficiently small, we have
\begin{align}\label{A15}
\|\partial_yf_3\|_{m-1}^2\lesssim (1+P(Q(t)))(\|\rho\|_m^2+\|f_1\|_m^2).
\end{align}
%It is noted that $\|\partial_yp\|_{m-1}$ is also bounded by $\|\partial_yf_3\|$ and other terms in (\ref{A11}).
%Here, we used the smallness of $\|\rho\|_{1,\infty}$ to obtain the estimate of (\ref{A15}).
By similar arguments, it follows that
\begin{align}\label{A16}
\|\partial_yf_4\|_{m-1}^2\lesssim (1+P(Q(t)))(\|\rho\|_m^2+\|f_2\|_m^2+\|\partial_y\rho\|_{m-1}^2+\|f_4\|_{m-1}^2);
\end{align}
%By similar arguments, it follows that
moreover,
\begin{align}\label{A17}
&\|\partial^2_yf_3\|_{m-2}^2\nonumber\\
\lesssim& (1+P(Q(t)))\|\rho\|_{m-1}^2+\|(f_1, f_3)\|_{m-1}^2+\|\partial_y(\rho, f_1)\|_{m-1}^2+\|\partial_yf_3\|_{m-2}^2),
\end{align}
and
\begin{align}\label{A18}
&\|\partial^2_yf_4\|_{m-2}^2\nonumber\\
\lesssim& (1+P(Q(t)))(\|(\rho, f_2, f_4)\|_{m-1}^2+\|\partial_y(\rho, f_2)\|_{m-1}^2+\|\partial_yf_4\|_{m-2}^2+(1+\|f_4\|_m^2)\|\partial^2_y\rho\|_{m-2}^2).
\end{align}

Finally, by combining  the estimates (\ref{bbb}), (\ref{3.9}), (\ref{3.10}), (\ref{3.12}), (\ref{3.14}), (\ref{A5}), (\ref{A8}), (\ref{A11}), (\ref{A14}), (\ref{A20})-\eqref{A18} % , (\ref{A21}), (\ref{A15}) and (\ref{A16})
we shall be able to complete the proof of Proposition \ref{P4.1}.
We remark that for this purpose  we may apply the multiplications: $(\ref{A5})\times M_0$ and $(\ref{A11})\times M_1$ with $M_0$ and $M_1$ being suitably large to cancel the terms $\varepsilon^2\|\partial_y^2v\|_{m-1}^2$ in (\ref{A8}) and $\varepsilon^2\|\partial_y^2u\|_{m-1}^2$ in (\ref{A14}), moreover, it can also cancel $\|\partial_y\rho\|_{m-1}^2$ in the right hand sides of (\ref{A20}) and (\ref{A16}) due to {\it a priori} assumption $Q(t)\leq C$, and $\delta$ in (\ref{bbb}) is chosen to be suitably small.
To derive the $L^2_{t, {\bf x}}$-norms of second order normal derivatives, we also need the following facts:
\begin{align*}
\sup\limits_{0\leq s\leq t}\|\partial_y(p, f_2, f_4)(s)\|_{m-2}^2\lesssim \|\partial_y(p, f_2, f_4)(0)\|_{m-2}^2+\int_0^t\|\partial_y(p, f_2, f_4)(s)\|_{m-1}^2ds\leq C_0\sigma_0,
\end{align*}
and
\begin{align*}
\sup\limits_{0\leq s\leq t}\varepsilon^2\|\partial_y^i(u, v)(s)\|_{m-2}^2\lesssim \varepsilon^2\|\partial_y^i(u, v)(0)\|_{m-2}^2+\int_0^t\varepsilon^2\|\partial_y^i(u, v)(s)\|_{m-1}^2ds\leq C_0\sigma_0,\ (i=1,2),
\end{align*}
due to the condition (\ref{CCCC}) and the {\it a priori} assumptions:
\begin{align*}
\int_0^t\|\partial_y(p, f_2, f_4)(s)\|_{m-1}^2ds\leq (C_0-1)\sigma_0, \quad \int_0^t\varepsilon^2\|\partial_y^i(u, v)(s)\|_{m-1}^2ds\leq (C_0-1)\sigma_0,\  (i=1,2),
\end{align*}
where the constant $C_0$ is suitably large % (independent of $\sigma_0$ and $\varepsilon$) 
and $\sigma_0$ is sufficiently small in Theorem \ref{Thm1}. Then, the $L^2_{t, {\bf x}}$-norms of the second order normal derivatives appearing on the right hand sides of (\ref{A8}) and (\ref{A14}) can be absorbed by related terms on the left hand sides due to the smallness of $\sigma_0$ and $\varepsilon$.
\bigskip

\section{Proof of Theorem \ref{Thm1}}

%{\bf Proof of Theorem \ref{Thm1}:}
We are ready to prove the   estimate \eqref{THM1} in Theorem \ref{Thm1}. Combining the estimates in Propositions \ref{P3.1} and \ref{P4.1}, we have
\begin{align}
\label{A19}
&\|(p-1, {\bf u}, {\bf G_1}, {\bf G_2})(t)\|_m^2+\varepsilon\|\partial_yf_2(t)\|_{m-1}^2+\varepsilon\left\|\partial_yp\right\|_{m-1}^2+\varepsilon\|\partial_y^2f_2(t)\|_{m-2}^2+\varepsilon\left\|\partial_y^2p\right\|_{m-2}^2\nonumber\\
&+\int_0^t(\varepsilon\|\nabla{\bf u}(\tau)\|_{m}^2+\varepsilon^2(\|\partial_y^2u\|_{m-1}^2+\|\partial_y^3u\|_{m-2}^2)+\varepsilon^2(\|\partial_y^2v\|_{m-1}^2+\|\partial_y^3v\|_{m-2}^2))d\tau\nonumber\\
&+\int_0^t(\|\partial_y({\bf u}, {\bf G_1}, {\bf G_2})(\tau)\|^2_{m-1}+\|\partial^2_y({\bf u}, {\bf G_1}, {\bf G_2})(\tau)\|^2_{m-2})d\tau \nonumber\\
&+\int_0^t\left(\left\|\partial_yp\right\|^2_{m-1}+\left\|\partial^2_yp\right\|^2_{m-2}\right)d\tau\nonumber\\
\lesssim&\|(p-1, {\bf u}, {\bf G_1}, {\bf G_2})(0)\|_m^2+\varepsilon\|\partial_yf_2(0)\|_{m-1}^2+\varepsilon\left\|\partial_yp(0)\right\|_{m-1}^2+\varepsilon\|\partial^2_yf_2(0)\|_{m-2}^2\nonumber\\
&+\varepsilon\left\|\partial^2_yp(0)\right\|_{m-2}^2+(1+P(Q(t)))\int_0^t\|(\rho, {\bf u}, {\bf G_1}, {\bf G_2})(\tau)\|_m^2d\tau.
\end{align}
Set
\begin{align*}
W(t)=&\|(p-1, {\bf u}, {\bf G_1}, {\bf G_2})(t)\|_m^2+\varepsilon\|\partial_yf_2(t)\|_{m-1}^2+\varepsilon\left\|\partial_yp\right\|_{m-1}^2+\varepsilon\|\partial_y^2f_2(t)\|_{m-2}^2+\varepsilon\left\|\partial_y^2p\right\|_{m-2}^2\nonumber\\
&+\int_0^t(\varepsilon\|\nabla{\bf u}(\tau)\|_{m}^2+\varepsilon^2(\|\partial_y^2u\|_{m-1}^2+\|\partial_y^3u\|_{m-2}^2)+\varepsilon^2(\|\partial_y^2v\|_{m-1}^2+\|\partial_y^3v\|_{m-2}^2))d\tau\\
&+\int_0^t(\|\partial_y({\bf u}, {\bf G_1}, {\bf G_2})(\tau)\|^2_{m-1}+\|\partial^2_y({\bf u}, {\bf G_1}, {\bf G_2})(\tau)\|^2_{m-2})d\tau\\
&+\int_0^t\left(\left\|\partial_yp\right\|^2_{m-1}
+\left\|\partial^2_yp\right\|^2_{m-2}\right)d\tau.
\end{align*}
By Lemma \ref{L2}, we have
\begin{align*}
&\|(p-1, {\bf u}, {\bf G_1}, {\bf G_2})(t)\|^2_{1, \infty}\\
\lesssim& \|(p-1, {\bf u}, {\bf G_1}, {\bf G_2})(0)\|_{3}^2+\|\partial_y(p, {\bf u}, {\bf G_1}, {\bf G_2})(0)\|_{3}^2\\
&+\int_0^t(\|(p-1, {\bf u}, {\bf G_1}, {\bf G_2})(\tau)\|_{4}^2+\|\partial_y(p, {\bf u}, {\bf G_1}, {\bf G_2})(\tau)\|_{3}^2)d\tau\lesssim W(t)(1+t)+\sigma_0,
\end{align*}
and
\begin{align*}
&\|(\nabla p, \nabla {\bf u}, \nabla {\bf G_1}, \nabla {\bf G_2})(t)\|_{1, \infty}\\
\lesssim& \|(\nabla p, \nabla {\bf u}, \nabla {\bf G_1}, \nabla {\bf G_2})(0)\|_{3}^2+\|\partial_y(\nabla p, \nabla {\bf u}, \nabla {\bf G_1}, \nabla {\bf G_2})(0)\|_{3}^2\\
&+\int_0^t(\|(\nabla p, \nabla {\bf u}, \nabla {\bf G_1}, \nabla {\bf G_2})(\tau)\|_{4}^2+\|\partial_y(\nabla p, \nabla {\bf u}, \nabla {\bf G_1}, \nabla {\bf G_2})(\tau)\|_{3}^2)d\tau\\
\lesssim& W(t)(1+t)+\sigma_0,
\end{align*}
provided that $m> 5$. Then one has
\begin{align}
W(t)\lesssim& \|(p-1, {\bf u}, {\bf G_1}, {\bf G_2})(0)\|_m^2+\varepsilon\|\partial_yf_2(0)\|_{m-1}^2+\varepsilon\left\|\partial_yp(0)\right\|_{m-1}^2\nonumber\\
&+\varepsilon\|\partial^2_yf_2(0)\|_{m-2}^2+\varepsilon\left\|\partial^2_yp(0)\right\|_{m-2}^2+(1+P(W(t)(1+t)+\sigma_0))W(t)t.
\end{align}
Let the time $t$ and $\sigma_0$ be suitably small, it follows that
\begin{align}
W(t)=&\|(p-1, {\bf u}, {\bf G_1}, {\bf G_2})(t)\|_m^2+\varepsilon\|\partial_yf_2(t)\|_{m-1}^2+\varepsilon\left\|\partial_yp\right\|_{m-1}^2+\varepsilon\|\partial_y^2f_2(t)\|_{m-2}^2+\varepsilon\left\|\partial_y^2p\right\|_{m-2}^2\nonumber\\
&+\int_0^t(\varepsilon\|\nabla{\bf u}(\tau)\|_{m}^2+\varepsilon^2(\|\partial_y^2u\|_{m-1}^2+\|\partial_y^3u\|_{m-2}^2)+\varepsilon^2(\|\partial_y^2v\|_{m-1}^2+\|\partial_y^3v\|_{m-2}^2))d\tau\nonumber\\
&+\int_0^t(\|\partial_y({\bf u}, {\bf G_1}, {\bf G_2})(\tau)\|^2_{m-1}+\|\partial^2_y({\bf u}, {\bf G_1}, {\bf G_2})(\tau)\|^2_{m-2})d\tau\nonumber\\
&+\int_0^t\left(\left\|\partial_yp\right\|^2_{m-1}
+\left\|\partial^2_yp\right\|^2_{m-2}\right)d\tau\nonumber\\
\lesssim& \|(p-1, {\bf u}, {\bf G_1}, {\bf G_2})(0)\|_m^2+\varepsilon\|\partial_yf_2(0)\|_{m-1}^2+\varepsilon\left\|\partial_yp(0)\right\|_{m-1}^2\nonumber\\
&+\varepsilon\|\partial^2_yf_2(0)\|_{m-2}^2+\varepsilon\left\|\partial^2_yp(0)\right\|_{m-2}^2,
\end{align}
and
\begin{align}
&\|(p-1, {\bf u}, {\bf G_1}, {\bf G_2})(t)\|^2_{1, \infty}+\|(\nabla p, \nabla {\bf u}, \nabla {\bf G_1}, \nabla {\bf G_2})(t)\|_{1, \infty} \nonumber\\
\lesssim& \|(p-1, {\bf u}, {\bf G_1}, {\bf G_2})(0)\|_m^2+\varepsilon\|\partial_yf_2(0)\|_{m-1}^2+\varepsilon\left\|\partial_yp(0)\right\|_{m-1}^2 \nonumber\\
&+\varepsilon\|\partial^2_yf_2(0)\|_{m-2}^2+\varepsilon\left\|\partial^2_yp(0)\right\|_{m-2}^2+\|\partial_y(p, {\bf u}, {\bf G_1}, {\bf G_2})(0)\|_3^2\nonumber\\
&+\|\partial_y(\nabla p, \nabla{\bf u}, \nabla{\bf G_1}, \nabla{\bf G_2})(0)\|_3^2.
\end{align}
Consequently, the following {\it a priori} assumptions hold true:
\begin{align*}
\|\rho\|_{L^\infty}<1/2,\qquad \|f_4\|_{L^\infty}<1/2
\end{align*}
by letting $\sigma_0$ in Theorem \ref{Thm1} be suitably small.
In fact, the following estimates hold true:
\begin{align*}
\|\rho\|_{1, \infty}\leq \frac{C_0}{2}\sigma_0,\qquad \|f_4\|_{1, \infty}\leq \frac{C_0}{2}\sigma_0,
\end{align*}
and
\begin{align*}
\int_0^t\|\partial_y(p, f_2, f_4)(s)\|_{m-1}^2ds\leq \frac{C_0}{2}\sigma_0, \qquad \int_0^t\varepsilon^2\|\partial_y^i(u, v)(s)\|_{m-1}^2ds\leq \frac{C_0}{2}\sigma_0,\ (i=1,2),
\end{align*}
where $C_0$ is a suitably large constant.
Based on the uniform {\it a priori} estimates established above, we can achieve  the estimate \eqref{THM1} and further  verify the inviscid limit in Theorem \ref{Thm1} by the similar arguments to those in \cite{MR}. We omit the details here. The proof of Theorem \ref{Thm1} is completed.

\bigskip

 \section*{Acknowledgement}
 The research  of D. Wang was  partially supported by the National Science Foundation under grant  DMS-1907519.
F.  Xie's research was supported by National Natural Science Foundation of China No.11831003 and Shanghai Science and Technology Innovation Action Plan No. 20JC1413000.

 \bigskip

\end{document}